\newtheorem{theorem}{Theorem}[section]
\newtheorem{lemma}[theorem]{Lemma}
\newtheorem{proposition}[theorem]{Proposition}
\newtheorem{corollary}[theorem]{Corollary}
\theoremstyle{definition}
\newtheorem{definition}[theorem]{Definition}
\newtheorem{remark}[theorem]{Remark}
\newtheorem{question}[theorem]{Question}
\newcommand{\pt}{{\mathrm{pt}}}
\newcommand{\stable}{{\mathtt{s}}}
\newcommand{\RR}{{\mathbb R}}
\newcommand{\QQ}{{\mathbb Q}}
\newcommand{\ZZ}{{\mathbb Z}}
\newcommand{\CPlane}{{\mathbb{C}\mathrm{P}^2}}
\newcommand{\SL}{\mathrm{SL}}
\newcommand{\id}{\mathrm{id}}
\newcommand{\norm}[1]{{\|{#1}\|}}
\newcommand{\seq}[1]{{\{ {#1}\}}}
\newcommand{\sat}{{\mathtt{s}}}
\newcommand{\cpn}{{\mathtt{c}}}
\newcommand{\pat}{{\mathtt{p}}}
\newcommand{\mcg}{{\mathrm{Mod}}}
\newcommand{\esg}{{\mathscr{E}}}
\newcommand{\thicktorus}{{\Theta^4}}
\title{On slope genera of knotted tori in the 4-space}
\author[Y.~Liu]{Yi Liu}
\address{%
    Department of Mathematics\\
    University of California\\
    Berkeley, CA 94720-3840, USA}
\email{%
    yliu@math.berkeley.edu}
\author[Y.~Ni]{Yi Ni}
\address{%
    Department of Mathematics\\
    California Institute of Technology\\
    Pasadena, CA 91125, USA}
\email{%
    yini@caltech.edu}
\author[H.-B.~Sun]{Hongbin Sun}
\address{%
    Department of Mathematics\\
    Princeton University\\
    Princeton, NJ 08544, USA}
\email{%
    hongbins@math.princeton.edu}
\author[S.-C.~Wang]{Shicheng Wang}
\address{%
    School of Mathematical Sciences\\
    Peking University\\
    Beijing 100871, China}
\email{%
    wangsc@math.pku.edu.cn}
\subjclass[2010]{Primary 57Q45; Secondary 57M05, 20F12}
\date{}
\begin{document}

\begin{abstract}
    In this note, we investigate genera of slopes of a
    knotted torus in the $4$-sphere analogous to the
    genus of a classical knot. We compare various
    formulations of this notion, and
    use this notion to study the
    extendable subgroup of the mapping class group
    of a knotted torus.
\end{abstract}

\maketitle

\tableofcontents

\section{Introduction}\label{Sec-introduction}

    In the classical knot theory,
    the genus of a knot in the $3$-sphere is a basic numerical invariant which has been well
    studied. In this note, we investigate some analogous notions for the slopes of a
    knotted torus in the $4$-sphere $S^4$. These reflect certain essential difference between knotted tori
    and knotted spheres. Similar phenomena arise in the case
    of knotted surfaces in $S^4$, but the discussion would 
    require more general treatments.
    We focus on the torus case in this note for the sake of simplicity.

    A knotted torus in $S^4$ is a locally flat subsurface homeomorphic to 
    the torus. Without loss of generality,
    we may fix a choice of marking (cf.~Subsection \ref{Subsec-marking}),
    then throughout this note,
    a \emph{knotted torus} in $S^4$ means a locally flat embedding:
        $$K:T^2\hookrightarrow S^4,$$
    from the torus to the $4$-sphere. By slightly abusing the notation,
    we often write the image of $K$ still as $K$.
    For any slope (i.e.~an essential simple closed curve)
    $c\subset K$, it makes sense to define the \emph{genus}:
        $$g_K(c),$$
    of $c$ as the smallest possible genus of all the locally flat, orientable, compact
    subsurfaces $F\hookrightarrow S^4$ whose image bounds $c$ and
    meets $K$ exactly in $c$. The genus of a slope is clearly an isotopy invariant
    of the knotted torus, and indeed,
    it is invariant under \emph{extendable automorphisms}.
    More precisely, if $\tau$ is an automorphism (i.e.~an orientation-preserving self-homeomorphism up
    to isotopy) of $T^2$ that can be extended over $S^4$ as an orientation-preserving self-homeomorphism,
    then $c$ and $\tau(c)$ must have the same genus for any slope $c\subset K$.
    It is clear that all such automorphisms form a subgroup:
        $$\esg_K\,\leq\,\mcg(T^2),$$
    of the mapping class group $\mcg(T^2)$,
    called the \emph{extendable subgroup} with respect to $K$.
    See Section~\ref{Sec-generaOfSlopes} for more details. A
    primary motivation of our study is to understand $\esg_K$ with the aid of
    the slope genera.

    Being natural as it is, the genus of a slope of a knotted torus is usually
    hard to be captured. In contrast, two weaker notions yield
    much more interesting applications. One of them is called the \emph{singular genus}
    of a slope $c$, denoted as $g^\star_K(c)$. It is defined by loosening
    the locally-flat-embedding condition on the bounding surface $F$
    above, only requiring $F\to S^4$ to be continuous. Another is called the induced \emph{seminorm} on
    $H_1(T^2)$, denoted as $\norm{\cdot}_K$. This is an analogue to the (singular)
    Thurston norm in the classical context. In Section~\ref{Sec-seminorm}, we prove an inequality
    relating the seminorms associated with the satellite construction, which is analogous
    to the classical Schubert inequality for knots in $S^3$.

    A simple observation at this point is that both the singular genus and the seminorm of
    a slope are group-theoretic notions,
    which can be rephrased by the commutator length and the stable commutator length
    in the fundamental group of the exterior of the knotted torus, respectively, (Remarks
    \ref{cl},~\ref{scl}).

    As an application of these results, we study braid satellites in Section~\ref{Sec-braidSatellites}.
    In particular, this allows us to obtain examples of knotted
    tori with finite extendable subgroups. In Section~\ref{Sec-misc},
    we exhibit examples where
    the singular genus is positive for a slope with vanishing seminorm. This implies the singular genus
    is strictly stronger than the seminorm as an invariant associated to slopes.
    We also relate the
    vanishing of the singular genus for a slope $c\subset K$ to
    the extendability of the Dehn twist $\tau_c\in\mcg(T^2)$
    along $c$ in a stable sense, (Lemma~\ref{vanishingSingGenus}).

    Section \ref{Sec-bg} surveys on results relevant to our discussion.
    A few questions related to slope genera and the extendable subgroups
    will be raised in Section~\ref{Sec-questions} for further studies.

	\bigskip\noindent\textbf{Acknowledgement}. The second author was
	partially supported by an AIM Five-Year Fellowship and NSF grant
	numbers DMS-1021956 and DMS-1103976.
	The fourth author was partially supported by grant
	No.10631060 of the National Natural Science Foundation of China. 
	The authors are grateful to Seiichi Kamada for 
	clarifying some point 
	during the devolopment of the paper and for very helpful guidance
	to the literature.
	The authors also 
	thank David Gabai, Cameron Gordon, and Charles Livingston
	for suggestions and comments,  
	and thank the referee for encouraging
	us to improve the structure of this paper.

\section{Background}\label{Sec-bg}

	This section briefly surveys on the history
	relevant to our topic in several aspects.
	We hope that it will supply the reader some context for our discussion. 
	However, the reader may safely skip this part for the moment,
	and perhaps come back later for further references. We
	thank the referee for suggesting 
	us to include some of these materials. 
		
	\subsection{Genera of knots} 
		For a classical knot $k$ in $S^3$, 
		one of the most important numerical invariant
		is its genus $g(k)$, introduced by Herbert Seifert in 1935 \cite{Seifert}.
		It is naturally defined as 
		the smallest genus among that of all possible Seifert surfaces of $k$;
		and recall that a Seifert surface of $k$ is an embedded compact
		connected surfaces in $S^3$ whose boundary is $k$. In other words,
		if $k$ is not the unknot, the smallest possible complexity of a Seifert surface
		is $2g(k)-1>0$. 
		
		In $3$-dimensional topology, a suitable generalization of this notion for
		any orientable compact $3$-manifold $M$ is the Thurston norm.
		It was introduced by William Thurston in 1986 \cite{Th}.
		Thurston discovered that the smallest possible complexity of properly embedded
		surface representatives
		for elements of $H_2(M,\partial M;\ZZ)$ can be linearly countinuously
		extended over $H_2(M,\partial M;\RR)$ to be a seminorm. 
		It is actually a norm in certain cases,
		for example, if $M$ is hyperbolic of finite volume. 
		Thurston then asked if this notion coincides with
		the one defined similarly using properly immersed surfaces, 
		which was later known as the singular Thurston norm. The question
		was answered affirmatively by David Gabai \cite{Ga} 
		using his Sutured Manifold Hierarchy. 
		As an immediate consequence, it was made clear that
		there is 
		only one notion of genus (or complexity) for classical knots, whether we 
		consider connected or
		disconnected, properly immersed or embedded Seifert surfaces.
		
		Generally speaking, the genus of a knot is quite accessible. For a $(p,q)$-torus knot, where
		$p,q$ are coprime positive integers, the genus is well known to be $(p-1)(q-1)/2$.
		For a satellite knot, the Schubert inequality yields a lower bound $(\hat{g}_\pat+|w|\cdot g_\cpn)$
		of the genus, in terms of the genus $g_\cpn$ of the companion knot, the genus
		$\hat{g}_\pat$ of the desatellite knot, and the winding number $w$ of the pattern 
		\cite{Schubert-inequality}.
		Furthermore, the genus of a knot is known to be algorithmically decidable
		\cite{Schubert-genus}. In fact, certifying an upper bound is 
		NP-complete \cite{AHT}. The genus can also be bounded and detected in terms of other
		more powerful algebraic invariants, such as the knot Floer homology \cite{OS}
		and twisted Alexander polynomials \cite{FV}.

	\subsection{Knotting and marking}\label{Subsec-marking}
		One of the classical problems in topology is the Knotting Problem, namely, ``are two embeddings
		of a given space into the $n$-space isotopic?'' Usually,
		the given space is a connected closed 
		$m$-manifold $M$ where $m<n$, and the embedding is locally flat,
		and the question can be made precise most naturally in 
		the piecewise-linear or the smooth category. When the codimension is high enough, for example,
		if $n=2m+1$ and $m>1$, all embeddings are isotopic to one another 
		so they `unknot' in this sense \cite{Wu}. However,
		below the stable range the Knotting Problem becomes very interesting, as we have already
		seen in the classical knot case.
		
		Regarding an embedding of $M^m$ into $\RR^n$ as a marking of its image, the Knotting Problem may 
		be phrased as to identify or distinguish knotting types (i.e.~isotopy classes) 
		of marked submanifolds. Somewhat more naturally,
		one can ask if two unmarked knotted submanifolds are isotopic to
		each other, or precisely, if two embeddings are isotopic up to precomposing an automophism of $M$ 
		in the given category. Suppose we have already solved the Knotting Problem, 
		then the latter question amounts to asking
		whether two markings differ only by an extendable automorphism, cf.~ \cite[Lemma 2.5]{DLWY}.
		Therefore, with or without marking does not
		make a difference if $M$ has a trivial mapping class group in the category, for example,
		in the cases of classical knots and $2$-knots, but it does in general if the extendable subgroup
		is a proper subgroup of the mapping class group, cf.~ \cite{DLWY,Hi-T2,Hi-trivial,Mo}.
		
		We refer the reader to the survey \cite{Skopenkov} for
		the Embedding Problem and the Knotting Problem in general dimensions.
		
	\subsection{Knotted surfaces}\label{Subsec-knottedSurfaceSurvey}
		The study of knotted surfaces can be suitably tagged as the mid dimensional knot theory.
		In this transitional zone between 
		the low dimensional case and the high dimensional ($2$-codimensional) case,
		we find both geometric-topological and algebraic-topological methods 
		with interesting interaction. 
		For extensive references on this topic,
		see the books \cite{Kawauchi,Hillman,CS,CKS,Kamada-braid}.
		
		With an auxiliary choice of marking, let us write a knotted surface 
		as a locally flat embedding $K:F\hookrightarrow \RR^4$,
		where $F$ is a closed surface. We can visualize a knotted surface
		by drawing a diagram obtained via a generic projection of 
		$K$ onto a $3$-subspace,
		or by displaying a motion picture of links in $\RR^3$, 
		obtained via a generic line projection
		that is Morse restricted to $K$, cf.~\cite{CS,KSS}. 
		The fundamental group of the exterior is called the knot
		group of $K$, denoted as $\pi_K$.
		Similar to the classical case, $\pi_K$ has a Wirtinger type 
		presentation in terms of its diagram \cite{Yajima-knotGroup}, and  
		$\pi_K$ can be isomorphically characterized by 
		having an Artin type presentation, described 
		in terms of 2-dimensional braids \cite{Kamada-braid}.
		
		Exteriors of knotted surfaces form an interesting family of 
		$4$-manifolds. The fundamental group of any such manifold
		is nontrivial, and it contains much information
		about the topology. For instance, it has been suspected for orientable knotted surfaces
		that having an infinite cyclic knot group
		implies unknotting, namely, 
		that $K$ bounds an embedded handlebody \cite{HK}.
		By deep methods 
		of 4-manifold topology,
		this has been confirmed for knotted spheres in the 
		topological category \cite[Theorem 11.7A]{FQ}.
		In earlier studies of knotted surface, 
		a frequent topic was to look for
		examples with prescribed properties of 
		the knot group, such as required deficiency \cite{Fox,Levine,Kanenobu}, 
		or required second homology 
		\cite{BMS,Gordon-secondHomology,Litherland-secondHomology,Maeda}. In
		some other constructions 
		of particular topological significance,
		combinatorial group theory again 
		plays an important role in the step of verification	
		\cite{Gordon-2Knot,Kamada-essentialKnot,Livingston-stablyIrreducible,Livingston-indecomposable}.
		
		Many of these constructions implement satellite knotting on various stages.
		The idea of such an operation is to replace a so-called companion knotted surface with
		another one that is embedded in the regular neighborhood the former,
		often in a more complicated pattern. 
		Basic examples of satellite knotting include
		the knot connected sum of knotted surfaces,
		and Artin's spinning construction \cite{Ar}, 
		as well as its twisted generalizations
		\cite{Ze,Litherland-deformTwist}.
		Generally speaking, satellite knotting would
		lead to an increase of genus, 
		under certain natural assumptions
		such as nonzero winding number.
		However, this can be avoided if we are just concerned
		about knotted spheres or tori 
		(cf. Subsection \ref{Subsec-satellite}).
		Like in the classical case, satellite knotting only changes
		the knot group by a van Kampen type amalgamation.
		Therefore, it is usually an approach worth considering
		if one wishes to maintain some control on the group level
		during the construction.
		As far as we are concerned,  
		the first explicit formulation of 
		the satellite construction of $n$-knots
		in literature was due to Yaichi Shinohara,
		in his 1971 paper \cite{Shinohara} 
		about generalized Alexander polynomials and signatures;
		and the satellite construction of knotted tori in $\RR^4$ first 
		appeared in Richard Litherland's 1981 paper \cite{Litherland-secondHomology},
		where he studied the second homology of the knot group.

\section{Genera of slopes}\label{Sec-generaOfSlopes}

    In this section, we introduce the genus and the singular genus for any slope
    of a knotted torus $K$ in $S^4$. 
    We provide criteria about finiteness associated to
    the extendable subgroup $\esg_K$ and the stable extendable subgroup
    $\esg^\stable_K$ of $\mcg(T^2)$ in terms of these notions. 

    \subsection{Genus and singular genus}\label{Subsec-genus}
        Let $K:T^2\hookrightarrow S^4$ be a knotted torus in $S^4$, i.e.~a locally flat embedding
        of the torus into the $4$-sphere. Let:
            $$X_K\,=\,S^4-K,$$
        be the exterior of $K$ obtained by removing an open regular neighborhood of $K$.

        \begin{lemma}\label{bounding}
            Let $F^2_g$ be the closed orientable surface of genus $g$, and
            $Y$ be a simply-connected closed $4$-manifold.
            Suppose $K:F^2_g\hookrightarrow Y$ is a null-homologous,
            locally flat embedding. Write $X=Y-K$ for the exterior of $K$ in $Y$.
            Then $\partial X$ is canonically homeomorphic to $F^2_g\times S^1$, up to isotopy,
            such that the homomorphism
            $H_1(F^2_g)\to H_1(X)$ induced by including $F^2_g$ as the first factor
            $F^2_g\times\pt$ is trivial. In particular, every essential simple closed curve $c\subset F^2_g$
            bounds a locally flat, properly embedded, orientable compact surface
            $S\hookrightarrow X_K$ with $\partial S$ embedded as $c\times\pt$.
        \end{lemma}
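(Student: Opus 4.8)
The plan is to trivialize the normal bundle of $K$, compute $H_1(X)$, use that computation to isolate the correct framing, and finally build the bounding surfaces by a general-position argument. First I would observe that, being locally flat, $K$ admits a normal disk bundle $N$ in $Y$ (a standard fact about locally flat surfaces in $4$-manifolds). Since $K$ and $Y$ are oriented, $N$ is an oriented $D^2$-bundle over $F^2_g$, classified by its Euler number $e\in H^2(F^2_g;\ZZ)\cong\ZZ$. This Euler number is the self-intersection $[K]\cdot[K]$, which vanishes because $K$ is null-homologous; hence $N\cong F^2_g\times D^2$ is trivial and $\partial X\cong F^2_g\times S^1$. It then remains only to pin down the correct trivialization.

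Next I would compute $H_1(X)$. Excision identifies the pair $(Y,X)$ with $(N,\partial N)\cong(F^2_g\times D^2,\,F^2_g\times S^1)$, and a K\"unneth calculation gives $H_2(Y,X)\cong\ZZ$, generated by the class $[D_\mu]$ of a meridian disk $\pt\times D^2$ whose boundary is the meridian $\mu$. In the exact sequence
\[
H_2(Y)\xrightarrow{\ q\ }H_2(Y,X)\longrightarrow H_1(X)\longrightarrow H_1(Y)=0,
\]
the map $q$ records intersection with $K$, namely $q(a)=(a\cdot K)\,[D_\mu]$. Because $[K]=0$, every intersection number $a\cdot K$ vanishes, so $q=0$ and the connecting map is an isomorphism; thus $H_1(X)\cong\ZZ$, generated by $\mu$.

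Then I would select the framing. Fixing any trivialization with section $s_0\colon F^2_g\to\partial X$ splits $H_1(\partial X)=s_{0*}H_1(F^2_g)\oplus\ZZ\mu$, and the composite $a=i_*\circ s_{0*}\colon H_1(F^2_g)\to H_1(X)\cong\ZZ$ is a homomorphism, i.e.\ a class in $H^1(F^2_g;\ZZ)$. Reframing by $\phi\in H^1(F^2_g;\ZZ)$ replaces $s_{0*}(\gamma)$ with $s_{0*}(\gamma)+\phi(\gamma)\mu$ and hence sends $a$ to $a+\phi$, so the choice $\phi=-a$ makes the section $F^2_g\times\pt$ trivial on $H_1(X)$. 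As $H_1(X)$ is torsion-free this $\phi$ is unique, which is the source of the canonicity up to isotopy asserted in the statement. With this framing fixed, any essential simple closed curve $c\subset F^2_g$ satisfies $[c\times\pt]=0$ in $H_1(X)$, so $c\times\pt$ bounds a singular surface; after putting it in general position it bounds an immersed compact orientable surface in $X$, whose finitely many transverse double points I would remove by tubing (at the expense of genus), producing the desired locally flat, properly embedded, orientable compact $S\hookrightarrow X$ with $\partial S=c\times\pt$.

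The hard part will be the middle two steps—establishing that $H_1(X)\cong\ZZ$ is generated by the meridian and thereby isolating the unique $0$-framing—together with the $4$-dimensional input that a locally flat surface carries a (necessarily trivial) normal bundle. By contrast, the resolution of double points at the end is routine.
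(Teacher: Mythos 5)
Your proposal is correct and follows essentially the same route as the paper: triviality of the normal bundle from $[K]=0$, a homological computation showing $H_1(X)\cong\ZZ$ is generated by the meridian, selection of the unique framing whose section is null-homologous in $X$, and then bounding $c\times\pt$ by a mapped surface (via the commutator/homology argument) whose double points are resolved to give an embedded one. The only cosmetic differences are that the paper gets the computation via Poincar\'e duality and excision ($H^1(X)\cong\ZZ$, $H^1(X,\partial X)=0$) rather than the long exact sequence of the pair $(Y,X)$, and it isolates the canonical splitting as the kernel of the induced homomorphism $H_1(\partial X)\to\ZZ$ rather than by correcting a reference framing by $\phi=-a$; these are interchangeable pieces of bookkeeping.
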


        \begin{proof}
            This is well-known, following from an easy homological argument.
            In fact, since $K$ is null-homologous, the normal bundle of $K$ in $Y$ is trivial,
            so $\partial X$ has a natural circle bundle structure
            $p:\partial X\to F^2_g$ over $F^2_g$
            which splits.  The splitting are given by framings of the normal bundle, which are in
            natural bijection to all the homomorphisms $\iota:H_1(F^2_g)\to H_1(\partial X)$ such that
            $p_*\circ\iota:\,H_1(F^2_g)\to H_1(F^2_g)$ is the identity. Using the Poincar\'{e} duality and
            excision, it is easy to see $H^1(X)\cong\ZZ$ and $H^1(X,\partial X)=0$. Thus
            the homomorphism $H^1(X)\to H^1(\partial X)$ is injective, and the generator of $H_1(X)$ induces
            a homomorphism $\alpha:H_1(\partial X)\to\ZZ$. It is straightforward to check that 
            $\alpha$ sends the circle-fiber of $\partial X$ to $\pm1$, 
            so the kernel of $\alpha$ projects isomorphically
            onto $H_1(F^2_g)$ via $p_*$. This gives rise to the canonical splitting $\partial X=F^2_g\times S^1$.
            It follows clearly from the construction that $H_1(F^2_g)\to H_1(X)$ is trivial.
            Moreover, if $c\times\pt$ is an essential simple closed curve on $K\times\pt$, it is homologically trivial
            in $X$, so it represents an element $[a_1,b_1]\cdots[a_k,b_k]$ in the commutator subgroup of
            $\pi_1(X)$. We take a compact orientable surface $S'$ of genus $k$ with exactly one boundary component,
            and there is a map $j: S'\to X$ sending $\partial S'$ homeomorphically onto $c\times\pt$. By a general position argument
            we may assume $j$ to be a locally flat proper immersion, and doing surgeries at double points yields a locally flat,
            properly embedded, orientable compact surface $S\hookrightarrow X$ bounded by
            $c\times \pt$.
        \end{proof}

        This allows us to make the following definition:

        \begin{definition}\label{genus}
            Let $K: T^2\hookrightarrow S^4$ be a knotted torus. For any slope, i.e.~an essential
            simple closed curve, $c\subset K$, the \emph{genus}:
                $$g_K(c),$$
            of $c$ is defined to be the minimum of the genus of $F$, as $F$ runs over all the locally flat,
            properly embedded, orientable, compact
            subsurfaces of $X_K$ bounded by $c\times\pt\subset \partial X_K$, (cf.~Lemma
            \ref{bounding}). The \emph{singular genus}:
                $$g^\star_K(c),$$
            of $c$ is defined to be the minimum of the genus of $F$, as $F$ runs over all the compact orientable
            surfaces with connected nonempty boundary such that there is a continuous map $F\to X_K$ sending
            $\partial F$ homeomorphically onto $c\times\pt$.
        \end{definition}

        \begin{remark}\label{cl}
            Recall that for a group $G$ and any element $u$ in the commutator subgroup
            $[G,G]$, the \emph{commutator length}:
                $$\mathrm{cl}(u),$$
            of $u$ is the smallest possible integer $k\geq0$ such that $u$ can be written
            as a product of commutators $[a_1,b_1]\cdots[a_k,b_k]$, where $a_i,b_i\in G$,
            and $i=1,\cdots,k$. Note that elements of $[G,G]$ that are conjugate in $G$
            have the same commutator length.
            As indicated in the proof of Lemma~\ref{bounding},
            it is clear that the singular genus $g^\star_K(c)$ is the commutator length $\mathrm{cl}(c)$,
            regarding $c$ as an element of the commutator subgroup of $\pi_1(X_K)$.
        \end{remark}

    \subsection{Extendable subgroup and stable extendable subgroup}\label{Subsec-extendableSubgroup}

        Let $\mcg(T^2)$ be the mapping class group of the torus,
        which consists of the isotopy classes
        of orientation-preserving self-homeomorphisms of $T^2$. Fixing a basis of $H_1(T^2)$,
        one can naturally identify $\mcg(T^2)$ as $\SL(2,\ZZ)$. We often refer to the elements
        of $\mcg(T^2)$ as \emph{automorphisms} of $T^2$, and do not distinguish
        elements of $\mcg(T^2)$ and their representatives.

        For any knotted torus $K:T^2\hookrightarrow
        S^4$, an automorphism $\tau\in\mcg(T^2)$ is said to be \emph{extendable} with respect
        to $K$ if $\tau$ can be extended as an orientation-preserving self-homeomorphism of $S^4$ via
        $K$. Note that this notion does not depend on the choice of the representative of $\tau$,
        cf.~\cite[Lemma 2.4]{DLWY}. It is also clear that all the extendable automorphisms
        form a subgroup of $\mcg(T^2)$.

        \begin{definition}\label{extSubgp}
            For a knotted torus $K:T^2\hookrightarrow S^4$, the \emph{extendable subgroup} with respect to
            $K$ is the subgroup of $\mcg(T^2)$ consisting of all the extendable automorphisms, denoted as:
                $$\esg_K\leq \mcg(T^2).$$
        \end{definition}

        The extendable subgroup $\esg_K$ reflects some essential difference between knotted tori and and knotted
        spheres (i.e.~$2$-knots) in $S^4$. For instance, it is known that $\esg_K$ is always a proper subgroup of
        $\mcg(T^2)$, of index at least three, (\cite{DLWY}, cf.~\cite{Mo} for the diffeomorphism extension case).
        Moreover, index three is realized by any
        unknotted embedding, namely, one 
        which bounds an embedded solid torus $S^1\times D^2$ in $S^4$, (\cite{Mo},
        cf.~\cite{Hi-trivial} for the general case of trivially embedded surfaces).
        In \cite{Hi-T2}, $\esg_K$ has been computed for the so-called
        spun $T^2$--knots and twisted spun $T^2$--knots.
        It is also clear that taking the connected sum with a knotted sphere in $S^4$
        does not change the extendable subgroup. However, for a general knotted torus in $S^4$,
        the extendable subgroup $\esg_K$ is poorly understood.
		In the following, we introduce a weaker notion called the stable extendable subgroup.
        From our point of view,
        the stable extendable subgroup is
        more closely related to the singular genera than the extendable subgroup is, cf.~Subsection
        \ref{Subsec-vanishingSingularGenus}.
        
        Suppose $K:T^2\hookrightarrow S^4$ is a knotted torus in $S^4$, and $Y$ is
        a closed simply connected $4$-manifold. There is a naturally induced
        embedding:
        	$$K[Y]:\,T^2\,\hookrightarrow\,Y,$$
        obtained by regarding $Y$ as the connected sum 
        $S^4\# Y$ and embedding $T^2$ into the first summand
        via $K$.
        This is well defined up to isotopy, and we call $K[Y]$
        the \emph{$Y$--stabilization} of $K$.
        An automorphism $\tau\in\mcg(T^2)$ is 
        said to be \emph{$Y$--stably extendable}, if $\tau$
        extends over $Y$ as an orientation-preserving self-homeomorphism
        via $K[Y]$. All such automorphisms clearly form a subgroup 
        of $\mcg(T^2)$. An automorphism $\tau\in\mcg(T^2)$ is said
        to be \emph{stably extendable}, if $\tau$ is $Y$--stably extendable for some 
        closed simply connected $4$-manifold $Y$.
        Note that if $\tau_1$ is $Y_1$--stably extendable and $\tau_2$ is $Y_2$--stably
        extendable, they are both $(Y_1\#Y_2)$--stably extendable.
        This means stably extendable automorphisms also form a subgroup of $\mcg(T^2)$.

        \begin{definition}\label{stabEsg}
            For a knotted torus $K:T^2\hookrightarrow S^4$, the 
            \emph{stable extendable subgroup} with respect to
            $K$ is the subgroup of $\mcg(T^2)$ consisting of all the 
            stably extendable automorphisms, denoted as:
                $$\esg^\stable_K\leq \mcg(T^2).$$
        \end{definition}


        \begin{proposition}\label{finiteness}
            Let $K:T^2\hookrightarrow S^4$ be a knotted torus. Then the following statements are true:
            \begin{enumerate}
                \item If the singular genus $g^\star_K(c)$ takes infinitely many distinct values as $c$ runs over all
                    the slopes of $K$, then the stable extendable subgroup $\esg^\stable_K$
                    is of infinite index in $\mcg(T^2)$;
                \item If there are at most finitely many distinct slopes $c\subset K$ with the singular genus
                    $g^\star_K(c)$ at most $C$ for every $C>0$, then the stable extendable subgroup
                    $\esg^\stable_K$ is finite.
            \end{enumerate}
        \end{proposition}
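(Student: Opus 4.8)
\section*{Proof proposal}

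The plan is to reduce both statements to a single invariance property: that the singular genus is constant on each orbit of the action of $\esg^\stable_K$ on the set of slopes of $K$. Granting the invariance of $g^\star_K$ under honestly extendable automorphisms (established for $g_K$ in Section~\ref{Sec-introduction}, and valid verbatim for $g^\star_K$), the extension to stably extendable automorphisms rests on the following observation. If $\tau$ is $Y$--stably extendable for a simply connected closed $4$--manifold $Y$, then $\tau$ is an honestly extendable automorphism of the knotted torus $K[Y]:T^2\hookrightarrow Y$, so $g^\star_{K[Y]}(\tau(c))=g^\star_{K[Y]}(c)$ for every slope $c$. It therefore suffices to check that stabilization does not alter the singular genus, i.e.\ that $g^\star_{K[Y]}(c)=g^\star_K(c)$. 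Writing $Y=S^4\# Y$ with $K$ supported in the first summand, the exterior decomposes as $X_{K[Y]}=X_K\# Y$, so van Kampen gives $\pi_1(X_{K[Y]})\cong\pi_1(X_K)*\pi_1(Y)\cong\pi_1(X_K)$, an isomorphism carrying the peripheral class of $c$ to itself. Since by Remark~\ref{cl} the singular genus is the commutator length of $c$ in this fundamental group, the equality $g^\star_{K[Y]}(c)=g^\star_K(c)$ follows, and hence $g^\star_K(\tau(c))=g^\star_K(c)$ for every stably extendable $\tau$. I expect this step---identifying the singular genus as a stabilization-invariant quantity---to be the conceptual heart of the argument.

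With the invariance in hand, I would set up the relevant group action. Identify $\mcg(T^2)$ with $\SL(2,\ZZ)$ acting on $H_1(T^2)\cong\ZZ^2$; slopes correspond to primitive classes up to sign, equivalently to lines in $\QQ^2$, and on this set $\SL(2,\ZZ)$ acts transitively. Two elementary facts will be used. First, the stabilizer of any two distinct slopes is exactly $\{\pm I\}$: a matrix in $\SL(2,\ZZ)$ fixing two distinct rational lines is diagonalizable over $\QQ$, its eigenvalues are rational and multiply to $1$, hence both equal $\pm1$, forcing $\pm I$. Second, a subgroup $H\le\SL(2,\ZZ)$ of index $n$ has at most $n$ orbits on any transitive $\SL(2,\ZZ)$--set, since the $H$--orbits correspond to the double cosets $H\backslash\SL(2,\ZZ)/\mathrm{Stab}$, whose number is bounded by $[\SL(2,\ZZ):H]=n$.

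For statement~(1), invariance shows $g^\star_K$ is constant on each $\esg^\stable_K$--orbit of slopes, so the number of orbits is at least the number of distinct values attained by $g^\star_K$. If this number were infinite while $\esg^\stable_K$ had finite index $n$, then the second fact above would force at most $n$ orbits, a contradiction; hence $\esg^\stable_K$ has infinite index.

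For statement~(2), the hypothesis makes every sublevel set $\{c:g^\star_K(c)\le C\}$ finite; since every slope has finite singular genus by Lemma~\ref{bounding}, invariance forces each $\esg^\stable_K$--orbit to lie inside a finite level set and hence to be finite. Choosing two distinct slopes $c_0\ne c_1$, the permutation action of $\esg^\stable_K$ on the finite set $\mathrm{orbit}(c_0)\cup\mathrm{orbit}(c_1)$ gives a homomorphism to a finite symmetric group whose kernel fixes both $c_0$ and $c_1$, hence lies in $\{\pm I\}$ by the first fact above. Since the image is finite and the kernel has order at most two, $\esg^\stable_K$ is finite. The only mild subtlety to verify carefully is that all slopes genuinely have finite orbits, which is exactly where the ``for every $C$'' phrasing of the hypothesis is needed.
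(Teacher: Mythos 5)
Your proposal is correct, and its first half coincides with the paper's proof: the paper obtains the invariance of $g^\star_K$ under a stably extendable $\tau$ in one stroke, by noting that $\tau$ extends to a self-homeomorphism of $X_K\# Y$ and that the induced automorphism of $\pi_1(X_K\#Y)\cong\pi_1(X_K)$ preserves commutator length (Remark~\ref{cl}) --- exactly your two-step argument (extendability for $K[Y]$ plus stabilization-invariance of the singular genus) collapsed into one --- and statement (1) is then deduced, as you do, from transitivity of the $\mcg(T^2)$-action on slopes together with the bound (number of $\esg^\stable_K$-orbits) $\leq$ (index). Where you genuinely diverge is statement (2). The paper shows each $\tau\in\esg^\stable_K$ is a torsion element of $\mcg(T^2)$ (some power of $\tau$ fixes every slope) and then invokes the virtual torsion-freeness of $\SL(2,\ZZ)$: the intersection of $\esg^\stable_K$ with a finite-index torsion-free normal subgroup must be trivial, so $\esg^\stable_K$ is finite, with a bound \emph{uniform in $K$} (the index of any such subgroup). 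You instead use finiteness of each orbit directly: the permutation action on the union of two orbits has kernel contained in the stabilizer of two distinct slopes, which is $\{\pm I\}$, exhibiting $\esg^\stable_K$ as an extension of a finite group by a group of order at most two. Your route is more elementary --- it needs only linear algebra over $\QQ$, not the existence of torsion-free finite-index subgroups of $\SL(2,\ZZ)$ --- and it makes explicit the two-slope stabilizer computation that the paper itself tacitly relies on (its step ``as $c$ is arbitrary, $\tau$ is torsion'' rests on the same fact); the paper's route, in exchange, gives the $K$-independent size bound. One small repair to your linear algebra: rational eigenvalues multiplying to $1$ are not by themselves forced to be $\pm 1$ (consider $2$ and $1/2$); you need that an eigenvalue of an integer matrix attached to a primitive integral eigenvector is an integer (equivalently, that rational roots of the monic integral characteristic polynomial are integers), after which $\det = 1$ forces both eigenvalues equal to $1$ or both equal to $-1$, and diagonalizability gives $\pm I$.
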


        \begin{remark} Hence the same holds for the extendable subgroup $\esg_K$.
            Using a similar argument, one can also show that the statements remain true
            when replacing $g^\star_K$ with $g_K$, and $\esg^\stable_K$ with $\esg_K$.\end{remark}

        \begin{proof}
            First observe that the singular genus of a slope is invariant
            under the action of a stably extendable automorphism, namely,
            if $\tau\in\esg^\stable_K$, then $g^\star_K(c)=g^\star_K(\tau(c))$
            for every slope $c\subset K$. This is clear because by the definition, $\tau$
            extends over $X_K'=X_K\#Y$
            as a homeomorphism $\tilde\tau:X_K'\to X_K'$,
            for some simply connected closed $4$-manifold $Y$. 
            This induces an automorphism of $\pi_1(X_K')\cong\pi_1(X_K)$,
            which preserves the commutator length of $c$, or equivalently, the singular genus
            $g^\star_K(c)$, (Remark~\ref{cl}).

            To see Statement (1), note that $\mcg(T^2)$ acts transitively on the space $\mathcal{C}$ of all the slopes
            on $T^2$. It follows immediately from the invariance of singular genera above that the cardinality of value set of
            $g^\star_K$ is at most the index $[\mcg(T^2):\esg^\stable_K]$. Thus if the range of $g^\star_K$ is infinite, the index
            of $\esg^\stable_K$ in $\mcg(T^2)$ is also infinite.

            To see Statement (2), suppose $\tau\in\esg^\stable_K$. By the assumption and the invariance of
            the singular genus under $\tau$, for any slope $c\subset K$,
            there are at most finitely many distinct slopes in the sequence $c, \tau(c),\tau^2(c),\cdots$. Thus for some integers
            $k>l\geq0$,
            $\tau^k(c)$ is isotopic to $\tau^l(c)$, or in other words, $\tau^d(c)$ is isotopic to $c$, where $d=k-l$.
            As $c$ is arbitrary, $\tau$ is a torsion element in $\mcg(T^2)$, so $\esg^\stable_K$ is a subgroup of
            $\mcg(T^2)$ consisting purely of torsion elements. It follows immediately that $\esg^\stable_K$ is a finite subgroup
            from the well-known fact that $\mcg(T^2)\cong\SL(2,\ZZ)$ is virtually torsion-free. Indeed, the index of any finite-index
            torsion-free normal subgroup of $\mcg(T^2)$ yields an upper bound of the size of $\esg^\stable_K$.
        \end{proof}

\section{Induced seminorms on $H_1(T^2;\RR)$}\label{Sec-seminorm}
    In this section, we introduce the seminorm $\norm{\cdot}_K$
    on $H_1(T^2;\RR)$ induced from any knotted torus $K:T^2\hookrightarrow
    S^4$. This may be regarded as a generalization of the (singular) Thurston norm in $3$-dimensional topology.
    We prove a Schubert-type inequality in terms of seminorms associated with satellite constructions.

    \subsection{The induced seminorm}\label{Subsec-seminorm}
        There are various ways to formulate the induced seminorm, among which we shall
        take a more topological one. Suppose $K:T^2\hookrightarrow S^4$ is a knotted torus in $S^4$.
        We shall first define the value of $\norm{\cdot}_K$ on $H_1(T^2;\ZZ)$ then extend
        linearly and continuously over $H_1(K;\RR)$.

        Recall that for a connected orientable compact surface $F$, the
        complexity of $F$ is defined as
        $\chi_{-}(F)=\max\,\seq{-\chi(F),0}$. In general, for an orientable
        compact surface $F=F_1\sqcup\cdots\sqcup F_s$, the \emph{complexity}
        of $F$ is defined as:
        $$x(F)=\sum_{i=1}^s\chi_{-}(F_i).$$

        For any $\gamma\in H_1(T^2)$, identified as an element of
        $H_1(\partial X_K)$, there exists a smooth immersion of pairs
        $(F,\partial F)\looparrowright(X_K,\partial X_K)$ such that $F$ is a
        (possibly disconnected) oriented compact surface, and that $\partial
        F$ represents $\gamma$. We define the \emph{complexity} of $\gamma$
        as:
        $$x(\gamma)=\min_{F}\,x(F),$$
        where $F$ runs through all the possible immersed surfaces as described
        above.

        The fact below follows immediately from the definition.

        \begin{lemma}\label{property-x}
            With the notation above,
            \begin{enumerate}
                \item $x(n\gamma)\leq nx(\gamma)$, for any $\gamma\in H_1(T^2)$ and any integer $n\geq 0$;
                \item $x(\gamma'+\gamma'')\leq x(\gamma')+x(\gamma'')$, for any $\gamma',\gamma''\in H_1(T^2)$.
            \end{enumerate}
        \end{lemma}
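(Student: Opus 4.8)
The plan is to prove both inequalities directly from the definition of $x(\gamma)$ as the minimum of complexities over all admissible immersed surfaces $(F,\partial F)\looparrowright(X_K,\partial X_K)$ with $\partial F$ representing $\gamma$. Since $x$ is defined as a minimum over this class, it suffices in each case to exhibit a \emph{single} competitor surface whose boundary represents the required homology class and whose complexity equals the right-hand side; the asserted inequality is then forced by minimality.

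For Statement (1), first I would fix a complexity-minimizing immersion realizing $x(F)=x(\gamma)$. I would then form $n$ parallel pushed-off copies $F_1,\dots,F_n$, regarded as the components of a single immersed surface $F^{(n)}=F_1\sqcup\cdots\sqcup F_n$; since each $F_i$ is a parallel copy of $F$, its boundary again represents $\gamma$, so $\partial F^{(n)}$ represents $n\gamma$. Because $\chi_-$ is additive over components and each copy contributes $\chi_-(F_i)=\chi_-(F)$, this gives $x(F^{(n)})=n\,x(\gamma)$, whence $x(n\gamma)\le n\,x(\gamma)$. The degenerate case $n=0$ I would handle by the empty surface, which has complexity $0$ and empty (hence trivial) boundary class, yielding $x(0)=0$.

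For Statement (2), I would fix complexity-minimizing immersions realizing $x(\gamma')$ and $x(\gamma'')$ and take their disjoint union as a single immersion of the abstract surface $F'\sqcup F''$ into $X_K$. This is legitimate precisely because immersions are allowed to have intersecting images, so no transversality correction is needed. The boundary of the disjoint union represents $\gamma'+\gamma''$ in $H_1(\partial X_K)$, and additivity of $\chi_-$ over components gives $x(F'\sqcup F'')=x(\gamma')+x(\gamma'')$, so $x(\gamma'+\gamma'')\le x(\gamma')+x(\gamma'')$.

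Neither step presents a genuine obstacle; the only points demanding care are bookkeeping ones. I would want to confirm that taking parallel copies really multiplies the boundary class by $n$ rather than introducing cancellation, which follows from orienting all copies compatibly, and that components with $\chi_-=0$ — spheres, disks, annuli, and tori — scale correctly, which is immediate since $n\cdot 0=0$. The remaining subtlety is purely definitional: ensuring that both the parallel-copy and disjoint-union constructions stay inside the admissible class of oriented compact immersed surfaces with boundary representing the prescribed class, which they manifestly do.
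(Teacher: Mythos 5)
Your proof is correct and is exactly the argument the paper has in mind: the paper states this lemma with only the comment that it ``follows immediately from the definition,'' and the immediate argument is precisely your union-of-competitors construction (for (1), $n$ copies of a minimizing immersed surface; for (2), the disjoint union of two minimizing immersed surfaces, with no disjointness in the target required since immersions may intersect). Your handling of the bookkeeping points — compatible orientations, additivity of $\chi_-$ over components, and the empty surface for $n=0$ — is all that needs checking, and it checks out.
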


        \begin{definition}\label{norm}
            Let $K:T^2\hookrightarrow S^2$ be a knotted torus. For any $\gamma\in H_1(T^2)$, we define:
                $$\norm{\gamma}_K=\inf_{m\in\ZZ_+}\,\frac{x(m\gamma)}{m}.$$
        \end{definition}

        \begin{lemma}\label{property-norm}
            The following statements are true.
            \begin{enumerate}
                \item $\norm{n\gamma}_K=n\norm{\gamma}_K$, for any $\gamma\in H_1(T^2)$ and any integer $n\geq 0$;
                \item $\norm{\gamma'+\gamma''}_K\leq\norm{\gamma'}_K+\norm{\gamma''}_K$, for any $\gamma',\gamma''\in H_1(T^2)$.
            \end{enumerate}
        \end{lemma}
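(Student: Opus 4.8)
The plan is to treat this as the standard passage from the subadditive, positively-scaling quantity $x$ to its stable version $\norm{\cdot}_K$, exactly as in the construction of stable norms. The key preliminary step is to upgrade the infimum in Definition~\ref{norm} to a genuine limit. Since $x\geq 0$ and, by Lemma~\ref{property-x}(2) applied with $\gamma'=a\gamma$ and $\gamma''=b\gamma$, the sequence $f(m)=x(m\gamma)$ is subadditive, Fekete's lemma gives
$$\norm{\gamma}_K=\inf_{m\in\ZZ_+}\frac{x(m\gamma)}{m}=\lim_{m\to\infty}\frac{x(m\gamma)}{m}.$$
I would record this identity first, as both parts of the lemma fall out of it cleanly.

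For homogeneity (Statement (1)), the case $n=0$ is immediate: the empty surface represents $0\in H_1(\partial X_K)$, so $x(0)=0$ and $\norm{0}_K=0$. For $n\geq 1$ I would prove the two inequalities separately. For $\norm{n\gamma}_K\leq n\norm{\gamma}_K$, Lemma~\ref{property-x}(1) gives $x(mn\gamma)=x(n\cdot(m\gamma))\leq n\,x(m\gamma)$ for every $m$, whence $\frac{x(mn\gamma)}{m}\leq n\cdot\frac{x(m\gamma)}{m}$, and taking the infimum over $m$ yields the bound. For the reverse inequality I rewrite $\frac{x(mn\gamma)}{m}=n\cdot\frac{x(mn\gamma)}{mn}$ and observe that $\frac{x(mn\gamma)}{mn}$ is one of the terms in the family defining $\norm{\gamma}_K$, hence $\geq\norm{\gamma}_K$; taking the infimum over $m$ gives $\norm{n\gamma}_K\geq n\norm{\gamma}_K$.

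For subadditivity (Statement (2)), the one genuine subtlety is that one cannot simply pass to infima termwise, since an inequality $c_m\leq a_m+b_m$ does not imply $\inf_m c_m\leq\inf_m a_m+\inf_m b_m$. This is exactly where the limit characterization above is needed. By Lemma~\ref{property-x}(2),
$$\frac{x(m(\gamma'+\gamma''))}{m}\leq\frac{x(m\gamma')}{m}+\frac{x(m\gamma'')}{m}$$
for every $m$, and since all three sequences converge to the respective seminorms as $m\to\infty$, the limits satisfy $\norm{\gamma'+\gamma''}_K\leq\norm{\gamma'}_K+\norm{\gamma''}_K$. Alternatively, one avoids Fekete's lemma altogether by evaluating all three quotients at a common multiple $m=m_1m_2$ of near-optimal indices $m_1,m_2$ and using Lemma~\ref{property-x}(1) to check that passing to such a multiple does not increase the quotients; letting the optimality error tend to $0$ then gives the same conclusion.

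The main point to get right is therefore not any deep geometry — all the surface-theoretic content is already packaged in Lemma~\ref{property-x} — but the order-of-quantifiers issue in handling the infimum in Statement (2), which the reduction to a limit (or the common-multiple trick) resolves.
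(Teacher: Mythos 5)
Your proof is correct, and it rests on exactly the same foundation as the paper's (Lemma~\ref{property-x} carries all the geometric content; the homogeneity argument is essentially identical, just phrased with infima rather than $\varepsilon$-approximations). The one genuine difference is in how you handle Statement (2): you first invoke Fekete's lemma to upgrade the infimum in Definition~\ref{norm} to a limit, after which the termwise inequality
$$\frac{x(m(\gamma'+\gamma''))}{m}\leq\frac{x(m\gamma')}{m}+\frac{x(m\gamma'')}{m}$$
passes to the limit with no quantifier issue. The paper instead stays with infima throughout: it picks $\varepsilon$-near-optimal indices $m'$, $m''$ for $\gamma'$, $\gamma''$ separately, passes both quotients to the common multiple $m'm''$ using Lemma~\ref{property-x}(1) (so the two near-optimal quotients can be compared at the same index), applies Lemma~\ref{property-x}(2) there, and lets $\varepsilon\to 0$ --- which is precisely the ``alternative'' you sketch at the end, so you have in effect reproduced the paper's argument as well. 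You also correctly identify the real subtlety (infima of termwise-bounded sequences need not be subadditive), which is the point both arguments are designed to circumvent. What each approach buys: the paper's common-multiple argument is self-contained and elementary, while your Fekete route is slightly slicker and gives the extra fact that $\norm{\gamma}_K=\lim_m x(m\gamma)/m$, which is the natural statement connecting the seminorm to the stable commutator length interpretation in Remark~\ref{scl}.
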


        \begin{proof}
            This follows from Lemma~\ref{property-x} and some elementary arguments. For any $\epsilon>0$, there is some $m>0$ such that
            $\norm{\gamma}_K>\frac{x(m\gamma)}{m}-\epsilon$, which by Lemma~\ref{property-x},
            $\geq\frac{x(nm\gamma)}{nm}-\epsilon\geq\frac{\norm{n\gamma}_K}{n}-\epsilon$. Let $\epsilon\to 0$, we see
            $\norm{\gamma}_K\ge\frac{\norm{n\gamma}_K}{n}$. Moreover, for any $\epsilon>0$, there exists $m>0$ such that
            $\norm{n\gamma}_K>\frac{x(mn\gamma)}m-\epsilon\geq n\norm{\gamma}_K-\epsilon$. Let $\epsilon\to 0$, we see
            $\norm{n\gamma}_K\ge n\norm{\gamma}_K$. This proves the first statement. To prove the second statement, for any $\epsilon>0$,
            there are $m',m''>0$ such that $\norm{\gamma'}_K>\frac{x(m'\gamma')}{m'}-\epsilon$,
            $\norm{\gamma''}_K>\frac{x(m''\gamma'')}{m''}-\epsilon$, so using Lemma~\ref{property-x},
            \begin{eqnarray*}
                \norm{\gamma'}_K+\norm{\gamma''}_K  &>      &\frac{x(m'\gamma')}{m'}+\frac{x(m''\gamma'')}{m''}-2\epsilon\\
                                                    &\geq   &\frac{x(m'm''\gamma')}{m'm''}+\frac{x(m'm''\gamma'')}{m'm''}-2\epsilon\\
                                                    &\geq   &\frac{x(m'm''(\gamma'+\gamma''))}{m'm''}-2\epsilon\\
                                                    &\geq   &\norm{\gamma'+\gamma''}_K-2\epsilon.
            \end{eqnarray*}
            Let $\epsilon\to 0$, we see the second statement.
        \end{proof}

        Provided Lemma~\ref{property-norm}, we can extend $\norm{\cdot}_K$
        radially over $H_1(T^2;\QQ)$, then extend continuously over
        $H_1(T^2;\RR)$. This uniquely defines a seminorm:
            $$\norm{\cdot}_K:H_1(T^2;\RR)\to [0,+\infty).$$
        Recall a seminorm on a real vector space $V$ is a function
        $\norm{\cdot}:V\to[0,+\infty)$ such that $\norm{r v}=|r|\,\norm{v}$,
        for any $r\in \RR$, $v\in V$; and that
        $\norm{v'+v''}\leq\norm{v'}+\norm{v''}$, for any $v',v''\in V$. It
        is a norm if it is in addition positive-definite, namely
        $\norm{v}=0$ if and only if $v\in V$ is zero.

        \begin{definition}\label{slopenorm}
            Let $K:T^2\hookrightarrow S^4$ be a knotted torus, and $c\subset T^2$ be a slope. Then the seminorm $\norm{c}_K$ is defined as
            $\norm{[c]}_K$, where $[c]\in H_1(T^2)$.
        \end{definition}

        \begin{remark}\label{scl}
            Recall that for a group $G$ and any element $u$ in the commutator subgroup
            $[G,G]$, the \emph{stable commutator length}:
                $$\mathrm{scl}(u)\,=\,\lim_{n\to+\infty}\,\frac{\mathrm{cl}(u^n)}{n},$$
            where $\mathrm{cl}(\cdot)$ denotes the commutator length (Remark~\ref{cl}).
            It is not hard to see that for any slope $c\subset K$, the seminorm
            $\norm{c}_K$ equals $\mathrm{scl}(c)$,
            regarding $c$ as an element of the commutator subgroup of $\pi_1(X_K)$,
            (cf.~\cite[Proposition 2.10]{Ca}).
        \end{remark}

        The observation below follows immediately from the definition and Proposition~\ref{finiteness}:

        \begin{lemma}\label{finitenessSeminorm}
            If $c\subset K$ is a slope with $\norm{c}_K>0$, then $g^\star_K(c)\geq\frac{\norm{c}_K+1}{2}$.
            Hence the stable extendable subgroup $\esg^\stable_K$ is finite if $\norm{\cdot}_K$ is nondegenerate. The same
            holds if replacing $g^\star_K$ with $g_K$ and $\esg^\stable_K$ with $\esg_K$.
        \end{lemma}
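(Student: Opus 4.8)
The plan is to establish the numerical inequality first and then deduce both finiteness assertions from it. The bridge between $g^\star_K$ and $\norm{\cdot}_K$ is the complexity function $x$ of Subsection~\ref{Subsec-seminorm}: I would sandwich $\norm{c}_K$ between its lower value and the complexity of a surface realizing the singular genus, using the $m=1$ term of Definition~\ref{norm}.

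First I would check that $\norm{c}_K>0$ forces $g^\star_K(c)\geq 1$. If instead $g^\star_K(c)=0$, then $c$ bounds a singular disk, so $c$ is nullhomotopic in $X_K$; then every multiple $m[c]$ is represented by an immersed disk, giving $x(m[c])=0$ and hence $\norm{c}_K=0$ (cf.~Remark~\ref{scl}), a contradiction. Now let $F$ be a connected compact orientable surface of genus $g=g^\star_K(c)$ with a single boundary component, equipped with a continuous map to $X_K$ sending $\partial F$ homeomorphically onto $c\times\pt$. By a general position argument as in the proof of Lemma~\ref{bounding} I would homotope this map, rel boundary, to a proper smooth immersion of pairs (generic maps of surfaces into $4$-manifolds are immersions), so that $F$ becomes admissible in the definition of $x([c])$. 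Since $g\geq 1$ we have $\chi_{-}(F)=-\chi(F)=2g-1$, and therefore
$$\norm{c}_K\ \leq\ x([c])\ \leq\ x(F)\ =\ 2g^\star_K(c)-1,$$
which rearranges to $g^\star_K(c)\geq(\norm{c}_K+1)/2$. Under the identifications of Remarks~\ref{cl} and~\ref{scl} this is exactly the statement $\mathrm{cl}(c)\geq(\mathrm{scl}(c)+1)/2$.

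Next I would deduce finiteness of $\esg^\stable_K$ when $\norm{\cdot}_K$ is nondegenerate. In that case $\norm{\cdot}_K$ is an honest norm on the plane $H_1(T^2;\RR)\cong\RR^2$, so every slope has $\norm{c}_K>0$ and the inequality above applies to all slopes. Fixing any $C>0$, the condition $g^\star_K(c)\leq C$ forces $\norm{c}_K\leq 2C-1$; since all norms on a finite-dimensional space are equivalent, the region $\seq{\gamma\in H_1(T^2;\RR):\norm{\gamma}_K\leq 2C-1}$ is compact, hence meets the integral lattice $H_1(T^2;\ZZ)$ in finitely many points, and in particular contains only finitely many slopes. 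Thus for every $C>0$ there are only finitely many slopes of singular genus at most $C$, and Proposition~\ref{finiteness}(2) gives that $\esg^\stable_K$ is finite.

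Finally, the versions with $g_K$ and $\esg_K$ follow formally: a locally flat embedded bounding surface is in particular a singular one, so $g_K(c)\geq g^\star_K(c)\geq(\norm{c}_K+1)/2$, and any $K$-extendable automorphism is $S^4$-stably extendable, so $\esg_K\leq\esg^\stable_K$ and finiteness of the latter forces finiteness of the former (one may instead invoke the $g_K$-analogue of Proposition~\ref{finiteness} noted in the Remark after it). The one point I would treat with care is the sharp additive constant: it is entirely an artifact of $\chi_{-}$ of a once-punctured genus-$g$ surface being $2g-1$ rather than $2g$, which is why verifying $g^\star_K(c)\geq 1$ (so that $F$ is not a disk and the truncation in the definition of $\chi_{-}$ is inactive) is the small but essential step.
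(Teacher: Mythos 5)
Your proof is correct and follows exactly the route the paper intends: the paper states this lemma as an immediate consequence of the definitions and Proposition~\ref{finiteness}, and your argument simply fills in those details (the $m=1$ term of the seminorm infimum against a once-punctured genus-$g$ immersed representative, with the disk case ruled out separately, plus norm-ball compactness feeding into Proposition~\ref{finiteness}(2)). Your care about the additive constant --- verifying $g^\star_K(c)\geq 1$ so that $\chi_{-}(F)=2g^\star_K(c)-1$ --- is precisely the point that makes the ``immediate'' claim rigorous.
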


    \subsection{The satellite construction}\label{Subsec-satellite}
        The satellite construction for knotted
        tori is analogous to that of classical knots in $S^3$,
        cf.~Subsection \ref{Subsec-knottedSurfaceSurvey} for historical remarks.
                
        Fix a product structure of $T^2\cong S^1\times S^1$.
        We shall denote the standardly parametrized thickened torus as:
            $$\thicktorus=S^1\times S^1\times D^2.$$
        The standard unknotted torus $T_{\mathtt{std}}:T^2\subset S^4$ is known
        as a smoothly embedded torus such that $T_\mathtt{std}$ bounds two smoothly
        embedded solid tori $D^2\times S^1$ and $S^1\times D^2$ in $S^4$, respective to factors. It is unique up to diffeotopy of $S^4$. Let
        $K_\cpn:T^2\hookrightarrow S^4$ be a knotted torus. There is a natural
        trivial product structure on a compact tubular neighborhood $\mathcal{N}(K_\cpn)\cong T^2\times D^2$
        of $K_\cpn$, so that $c\times*$ is homologically trivial in the complement $X_{K_\cpn}$
        for any slope $c\subset T^2$. Thus there is a natural isomorphism:
            $$\mathcal{N}(K_\cpn)\cong\thicktorus,$$
        up to isotopy, as we fixed the product structure on $T^2$.

        \begin{definition}\label{def-pat}
            A \emph{pattern} knotted torus is a smooth embedding $K_\pat:T^2\hookrightarrow \thicktorus$.
            The \emph{winding number} $w(K_\pat)$ of $K_\pat$ is the algebraic intersection number
            of $[K_\pat]\in H_2(\thicktorus)$ and the fiber disk $[\pt\times \pt\times D^2]
            \in H_2(\thicktorus,\partial\thicktorus)$.
        \end{definition}

        \begin{definition}\label{def-sat}
            Let $K_\cpn:T^2\hookrightarrow S^4$ be a knotted torus and
            $K_\pat:T^2\hookrightarrow \thicktorus$ be a pattern knotted torus. After fixing a product structure on $T^2$,
            the \emph{satellite} knotted torus, denoted as:
                $$K=K_\cpn\cdot K_\pat,$$
            is the composition:
                $$T^2\stackrel{K_\pat}{\longrightarrow}\thicktorus\stackrel{\cong}{\longrightarrow}
                 \mathcal{N}(K_\cpn)\stackrel{\subset}{\longrightarrow} S^4.$$
            We call $K_\cpn$ the \emph{companion} knotted torus. The \emph{desatellite}
            $\hat{K}_\pat:T^2\hookrightarrow S^4$ of $K$ is the knotted torus $\hat{K}_\pat=T_\mathtt{std}\cdot K_\pat$.
        \end{definition}

        For any element $\gamma\in H_1(T^2)$ and a pattern $K_\pat:T^2\hookrightarrow\thicktorus$,
        there is a push-forward element $\gamma_\cpn\in H_1(T^2)$ under the composition:
            $$T^2\stackrel{K_\pat}\longrightarrow \thicktorus\stackrel\cong\longrightarrow T^2\times D^2\longrightarrow T^2,$$
        where the isomorphism respects the choice of the product structure on $T^2$, and the last map is the projection onto the $T^2$ factor.
        If $K=K_\cpn\cdot K_\pat$ is a satellite with pattern $K_\pat$, one should regard $\gamma$ as an element of $H_1(K)$, and $\gamma_\cpn$ as an
        element of $H_1(K_\cpn)$.

    \subsection{A Schubert type inequality}
        The theorem below is analogous to the Schubert inequality in the classical knot theory (\cite[Kapitel II, \S 12]{Schubert-inequality}).

        \begin{theorem}\label{Schubert}
            Suppose $K=K_\cpn\cdot K_\pat$ is a satellite knotted torus in $S^4$. Then for
            any $\gamma\in H_1(T^2;\RR)$,
                $$\norm{\gamma}_K\geq\norm{\gamma}_{\hat{K}_\pat}.$$
            Moreover, if the winding number $w(K_\pat)$ is nonzero, then:
                $$\norm{\gamma}_K\geq\norm{\gamma}_{\hat{K}_\pat}+\norm{\gamma_\cpn}_{K_\cpn}.$$
        \end{theorem}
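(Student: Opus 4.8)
The plan is to realize $\norm{\cdot}_K$ as a limit of complexities and then cut a minimizing surface along the companion torus. Since the complexity $x$ introduced in Subsection~\ref{Subsec-seminorm} (formed in whichever exterior the subscript names) is subadditive by Lemma~\ref{property-x}, Fekete's lemma gives $\norm{\gamma}_K=\lim_{m\to\infty}x_K(m\gamma)/m$, so it suffices to establish, for every $m$, one inequality among the complexities $x_K(m\gamma)$, $x_{\hat K_\pat}(m\gamma)$, $x_{K_\cpn}(m\gamma_\cpn)$ and $x_{T_{\mathtt{std}}}(m\gamma_\cpn)$ whose terms converge after division by $m$. The geometric backbone is the decomposition of exteriors along the $3$-torus $T=\partial\mathcal{N}(K_\cpn)\cong T^2\times S^1$: with $W=\thicktorus-\mathcal{N}(K_\pat)$ the common pattern exterior, one has $X_K=X_{K_\cpn}\cup_T W$ and $X_{\hat K_\pat}=X_{T_{\mathtt{std}}}\cup_T W$, the two manifolds differing only in the glued-on companion piece.

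First I would fix $m$, take an immersed surface $(F,\partial F)\looparrowright(X_K,\partial X_K)$ realizing $x_K(m\gamma)$, and put it transverse to $T$, so that $\alpha=F\cap T$ is a union of circles cutting $F$ into $F_C\looparrowright X_{K_\cpn}$ and $F_W\looparrowright W$. As $\chi$ is additive across circles, the task is to guarantee $\chi_{-}(F)\ge\chi_{-}(F_C)+\chi_{-}(F_W)$; this requires first discarding the circles of $\alpha$ that are inessential on $F$ or on $T$ by innermost-disk and compression moves, carried out in the immersed category without increasing $\chi_{-}$, so that the cut creates no new disk or sphere components. This is the four-dimensional immersed analogue of the classical additivity of complexity under cutting along essential curves.

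Next comes the homological accounting pinning down the boundary classes of the two pieces. Let $q\colon W\to T^2$ be the restriction of the projection $\thicktorus\to T^2$. From $[\partial F_W]=0$ in $H_1(W)$, together with the facts that $q$ sends a meridian of $K_\pat$ to a nullhomologous loop and the slope $\gamma$ to $\gamma_\cpn$ (the definition of the push-forward), one gets $q_{*}[\alpha]=m\gamma_\cpn$ (up to sign); moreover $\partial F_C$ bounds $F_C$ in $X_{K_\cpn}$, so by Lemma~\ref{bounding} its total meridional winding is zero and the $S^1$-component of $[\alpha]$ in $H_1(T)=H_1(T^2)\oplus\ZZ$ vanishes. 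Hence $[\alpha]=m\gamma_\cpn$ exactly in $H_1(\partial X_{K_\cpn})$, so $F_C$ is an admissible surface for the companion and $\chi_{-}(F_C)\ge x_{K_\cpn}(m\gamma_\cpn)$. On the pattern side, capping $\alpha$ by a surface in $X_{T_{\mathtt{std}}}$ and adjoining it to $F_W$ produces a surface in $X_{\hat K_\pat}$ bounding $m\gamma$, whence $\chi_{-}(F_W)\ge x_{\hat K_\pat}(m\gamma)-\hat x(\alpha)$, where $\hat x(\alpha)$ denotes the least complexity of such a cap.

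Assembling these gives $x_K(m\gamma)\ge x_{\hat K_\pat}(m\gamma)+x_{K_\cpn}(m\gamma_\cpn)-\hat x(\alpha)$; dividing by $m$ and letting $m\to\infty$ yields the Schubert inequality provided $\hat x(\alpha_m)=o(m)$. This last point is where I expect the main difficulty to lie, and it is exactly where the triviality of the unknotted torus is used: because both generators of $H_1(T^2)$ bound disks in $X_{T_{\mathtt{std}}}$ (the two complementary solid tori furnish the disks), the induced seminorm $\norm{\cdot}_{T_{\mathtt{std}}}$ is identically zero, so capping is asymptotically free at the level of homology classes; promoting this to a sublinear bound on the specific multicurve $\alpha_m$, rather than merely on its class, by a cobordism argument inside $T$, is the delicate part. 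Granting it, the nonnegativity of $\norm{\gamma_\cpn}_{K_\cpn}$ immediately yields the first inequality as well. Finally, the hypothesis $w(K_\pat)\ne 0$ serves only to make the companion term substantive: since $w$ is, up to sign, the determinant of the induced map $\gamma\mapsto\gamma_\cpn$ on $H_1(T^2)$, it is nonzero exactly when that map is injective, so that $\gamma_\cpn\ne 0$ for $\gamma\ne 0$ and $F$ is forced to meet $T$ essentially.
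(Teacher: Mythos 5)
Your outline reproduces the paper's overall strategy---decompose $X_K$ and $X_{\hat K_\pat}$ along the companion $3$-torus, cut a near-minimizing immersed surface, and estimate the two pieces---and your computation of the total class of $\alpha$ is correct. But the two places where you wave your hands are exactly where the content of the proof lies. First, the normalization. In the immersed category you cannot ``discard inessential circles by innermost-disk moves without increasing $\chi_-$'': an innermost circle of $F\cap T$ that is inessential on $F$ bounds a disk $D\subset F$ lying in one piece, and if that piece is the pattern side $W$, then cutting along the circle creates a disk component on the wrong side and strictly increases the sum of complexities (a disk counts $0$ in $x$ but $-1$ in $\chi$), so the additivity you need fails. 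The only remedy is to re-map $D$ rel boundary into $T$ and push it to the companion side, and this requires $j(\partial D)$, which is null-homotopic in $W$, to be null-homotopic in $T$---i.e., it requires $H_1(\partial_\cpn Y)\to H_1(Y)$ to be injective, which is Lemma~\ref{patBdry} and holds precisely because $w(K_\pat)\neq0$ (this is the content of Lemmas~\ref{pushOff} and~\ref{normalF}). So the winding-number hypothesis is load-bearing in the cutting argument itself; your closing claim that it ``serves only to make the companion term substantive'' is wrong, and your derivation of the first inequality from the second collapses when $w(K_\pat)=0$. The paper proves the first inequality by an entirely different, softer device (Lemma~\ref{ineq-desat}): extend $\mathrm{id}_Y$ over the $2$-skeleton of $X_{K_\cpn}$ to get a map $Y\cup X^{(2)}_{K_\cpn}\to X_{\hat K_\pat}$, using $\pi_1(X_{T_{\mathtt{std}}})\cong\ZZ$, and push immersed surfaces forward. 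Relatedly, your companion estimate has a bookkeeping clash: if the disk components are left in $F_C$ then complexity additivity fails as just explained, while if they are moved to the pattern side then $\partial F_C$ no longer represents $m\gamma_\cpn$ but a class $\beta$ differing from it by the classes of the disk boundaries; the paper resolves this by showing that the error term has zero seminorm (Lemma~\ref{normAlpha}).

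Second, the step you explicitly grant---$\hat x(\alpha_m)=o(m)$---is the crux of the theorem, and the vanishing of $\norm{\cdot}_{T_{\mathtt{std}}}$ on homology classes does not deliver it. The multicurve $\alpha_m$ may have a number of components growing linearly in $m$, representing mutually distinct meridional classes in $H_1(X_{T_{\mathtt{std}}})\cong\ZZ$; the cheapest cap one can write down in general is a planar surface realizing a null-homotopy of their product, whose complexity is the number of components minus two, i.e.\ linear in $m$, and a ``cobordism inside $T$'' does not help because its own complexity must be added to the cap. The paper's solution is to fix the multicurve before capping: since $H_1(X_K)\cong\ZZ$, one passes to a finite cyclic cover $\tilde F\to F$ of some degree $d$ (Lemma~\ref{covering}), which scales both $m$ and $x(F)$ by $d$ and hence is free in the ratio, after which every essential companion circle represents $0$ or $\pm\omega$ in $H_1(X_K)$. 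Then the null-homologous circles bound disks in $X_{T_{\mathtt{std}}}$ (as $\pi_1(X_{T_{\mathtt{std}}})\cong\ZZ$), and the $+\omega$ and $-\omega$ circles, which occur in equal numbers because their sum bounds the companion piece, are joined in pairs by annuli; thus the cap costs nothing at all (Lemma~\ref{hatJ}), not merely $o(m)$. Without this covering-and-pairing device your inequality $x_K(m\gamma)\geq x_{\hat K_\pat}(m\gamma)+x_{K_\cpn}(m\gamma_\cpn)-\hat x(\alpha_m)$ carries an error term you cannot control, so the proposal as written does not prove the theorem.
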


        We prove Theorem~\ref{Schubert} in the rest of this subsection.

        Let $X_K$ be the complement of the satellite knot $K=K_\cpn\cdot K_\pat$ in $S^4$.
        The satellite construction gives a decomposition:
            $$X_K=Y\cup X_{K_\cpn},$$
        glued along the image of $\partial\thicktorus$. $Y$ is diffeomorphic
        to the complement of $K_\pat$ in $\thicktorus$, so it has two boundary components, namely the \emph{satellite boundary}
        $\partial_\sat Y$ which is $\partial X_K$, and the \emph{companion boundary}
        $\partial_\cpn Y$ which is the image of $\partial \thicktorus$.

        Similarly, the complement $X_{\hat{K}_\pat}$ can be decomposed as $Y\cup X_{T_{\tt std}}$.

        The first inequality is proved in the following lemma:

        \begin{lemma}\label{ineq-desat}
            $\norm{\gamma}_K\geq \norm{\gamma}_{\hat{K}_\pat}$.
        \end{lemma}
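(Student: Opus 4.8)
The plan is to prove the sharper statement that for every positive integer $m$ one has an inequality of complexities $x_{\hat K_\pat}(m\gamma)\le x_K(m\gamma)$, where I write $x_K$ and $x_{\hat K_\pat}$ for the complexity of Definition~\ref{norm} computed in $X_K$ and in $X_{\hat K_\pat}$ respectively; dividing by $m$ and passing to the infimum then gives $\norm{\gamma}_K\ge\norm{\gamma}_{\hat K_\pat}$. By homogeneity and continuity of the seminorms it is enough to treat integral $\gamma\in H_1(T^2;\ZZ)$. So I fix $m$ and start from a smooth immersion of pairs $(F,\partial F)\looparrowright(X_K,\partial X_K)$ with $F$ oriented compact, $\partial F$ representing $m\gamma$, and $x(F)=x_K(m\gamma)$, and I will manufacture a competitor in $X_{\hat K_\pat}$ of no greater complexity.

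First I would exploit the decomposition $X_K=Y\cup X_{K_\cpn}$ and put $F$ in general position with respect to the separating $3$--manifold $\Sigma=\partial_\cpn Y$, the image of $\partial\thicktorus$. Then $\Gamma=F\cap\Sigma$ is a disjoint union of circles and $F$ splits as $F_Y\cup_\Gamma F_\cpn$, with $F_Y=F\cap Y$ and $F_\cpn=F\cap X_{K_\cpn}$. Since the genuine boundary $\partial F$ lies in $\partial_\sat Y\subset Y$, the piece $F_\cpn$ is an oriented compact surface with $\partial F_\cpn=\Gamma\subset\Sigma$. Because $X_{\hat K_\pat}=Y\cup X_{T_{\mathtt{std}}}$ contains the very same pattern piece $Y$, glued to $X_{T_{\mathtt{std}}}$ along the identical $\Sigma\cong\partial X_{T_{\mathtt{std}}}$, it suffices to replace $F_\cpn$ by a smoothly immersed surface $F'_\cpn$ in $X_{T_{\mathtt{std}}}$ having the same boundary $\Gamma$ on $\Sigma$ and satisfying $\chi_{-}(F'_\cpn)\le\chi_{-}(F_\cpn)$: regluing $F_Y$ to $F'_\cpn$ along $\Gamma$ (after smoothing in a collar of $\Sigma$) then produces a surface $F'$ in $X_{\hat K_\pat}$ whose boundary is exactly $\partial F_Y\cap\partial_\sat Y$, still representing $m\gamma$.

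The heart of the argument is this replacement, which I would carry out by pushing $F_\cpn$ directly into the simpler complement. Since $T_{\mathtt{std}}$ is unknotted one has $\pi_1(X_{T_{\mathtt{std}}})\cong\ZZ$, generated by the meridian, and under the canonical boundary identifications of Lemma~\ref{bounding} the meridian homomorphisms $H_1(\partial X_{K_\cpn})\to H_1(X_{K_\cpn})\cong\ZZ$ and $H_1(\partial X_{T_{\mathtt{std}}})\to H_1(X_{T_{\mathtt{std}}})\cong\ZZ$ coincide. Each component of $\Gamma$ bounds its component of $F_\cpn$ inside $X_{K_\cpn}$, hence is null-homologous there and so has meridian count zero. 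Consequently the single obstruction to extending the inclusion $\Gamma\hookrightarrow\partial X_{T_{\mathtt{std}}}$ over $F_\cpn$ rel $\partial F_\cpn$, which lives in $H^2(F_\cpn,\partial F_\cpn;\pi_1(X_{T_{\mathtt{std}}}))$ and evaluates on each component to precisely that meridian count, vanishes. This yields a continuous map $h\colon F_\cpn\to X_{T_{\mathtt{std}}}$, rel $\partial F_\cpn$, whose domain is literally $F_\cpn$; putting $h$ into general position turns it into a smooth immersion (a generic smooth map of a surface into a $4$--manifold has only isolated transverse double points) without changing the domain, so $F'_\cpn:=h(F_\cpn)$ satisfies $\chi_{-}(F'_\cpn)\le\chi_{-}(F_\cpn)$.

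I expect the main obstacle to lie in the complexity bookkeeping when reassembling $F'=F_Y\cup_\Gamma F'_\cpn$. The Euler characteristic is additive across the circles $\Gamma$, but $x(\cdot)=\sum\chi_{-}$ ignores components of non-negative Euler characteristic, so the distribution of complexity among the components of $F'$ may differ from that of $F$, and I must rule out any net increase. This is handled exactly as in the Thurston-norm cut-and-paste arguments: one first arranges, by surgering along compressions and discarding sphere and disk components, that the relevant surfaces carry no trivial pieces, whereupon $x$ agrees with $-\chi$ on what remains and additivity gives the desired bound. Modulo this routine but delicate step, the chain $x_{\hat K_\pat}(m\gamma)\le x(F')\le x(F)=x_K(m\gamma)$ holds for every $m$, and taking the infimum over $m$ completes the proof.
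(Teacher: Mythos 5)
Your proof is correct, and it implements the transplant differently from the paper. The paper's proof never cuts the surface: it fixes a CW structure on $X_{K_\cpn}$ with $\partial X_{K_\cpn}$ as a subcomplex, builds once and for all a continuous map $f\colon Y\cup X^{(2)}_{K_\cpn}\to X_{\hat{K}_\pat}$ that is the identity on $Y$ (extension over $1$--cells uses the natural isomorphism $H_1(X_{K_\cpn})\cong H_1(X_{T_{\tt std}})$ coming from the common boundary, and extension over $2$--cells uses $\pi_1(X_{T_{\tt std}})\cong\ZZ$), then homotopes any immersed $j\colon F\looparrowright X_K$ into $Y\cup X^{(2)}_{K_\cpn}$ and composes with $f$. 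Your proof instead cuts $F$ along $\Sigma=\partial_\cpn Y$ and re-fills the companion side surface-by-surface, using obstruction theory on $F_\cpn$ with the same two inputs ($\pi_1(X_{T_{\tt std}})\cong\ZZ$ and the agreement of the meridian homomorphisms on $H_1(\Sigma)$). Both work; the paper's global map buys you freedom from any per-surface homological analysis of $\Gamma$, while your version avoids the CW-structure and cellular-approximation step and is more hands-on.

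Two points in your write-up need repair, though neither is fatal. First, the claim ``each component of $\Gamma$ bounds its component of $F_\cpn$ inside $X_{K_\cpn}$, hence is null-homologous there'' is false in general: a component $V$ of $F_\cpn$ may have several boundary circles, and an individual circle (e.g.\ one isotopic to the companion meridian) can have nonzero meridian count. What is true, and is all your obstruction argument needs since $\pi_1(X_{T_{\tt std}})\cong\ZZ$ is abelian, is that the \emph{total} boundary of each component $V$ bounds $V$ in $X_{K_\cpn}$, so the sum of the meridian counts of its boundary circles vanishes; this makes the obstruction class on each component zero. Second, your final paragraph worries about a non-issue: in your own construction the replacement map $h$ has domain literally $F_\cpn$ and agrees with $j$ on $\Gamma$, so the reassembled object is the \emph{same} surface $F$ equipped with a new map into $X_{\hat{K}_\pat}$; hence $x(F')=x(F)$ on the nose, and no compression or discarding of trivial pieces is needed. (This is exactly the feature that makes both your argument and the paper's bookkeeping-free.)
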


        \begin{proof}
            We equip $X_{K_\cpn}$ with a finite CW complex structure such that there is only one $0$-cell and the $0$-cell is contained in $\partial
            X_{K_\cpn}$, which is a subcomplex of
            $X_{K_\cpn}$. Let $X^{(q)}_{K_\cpn}$ be the union of $\partial X_{K_\cpn}$
            and the $q$-skeleton of $X_{K_\cpn}$. We may extend the identity map on $Y$ to a continuous map
            $f:Y\cup X^{(2)}_{K_\cpn}\to X_{\hat{K}_\pat}$. To see this, note that the inclusion map $\partial X_K\to X_K$
            induces a surjective map on $H_1$ for any $K: T^2\to S^4$, so the identity map on
            $\partial X_{K_\cpn}$ induces a natural isomorphism $H_1(X_{K_\cpn})\cong H_1(X_{T_{\tt std}})$.
            Every $1$-cell in $X_{K_{\cpn}}$ represents a $1$-cycle, we can extend $\id_{\partial_\cpn Y}$
            to a map $f|: X^{(1)}_{K_\cpn}\to X_{T_{\tt std}}$, so that the induced map
            $H_1(X^{(1)}_{K_\cpn})\to H_1(X_{T_{\tt std}})$ agrees with the map
            on the first homology induced by $X^{(1)}_{K_\cpn}\hookrightarrow X_{K_\cpn}$.  It is
            easy to see $X_{T_{\tt std}}\simeq S^1\vee S^2 \vee S^2$, so $\pi_1(X_{T_{\tt std}})
            \cong\ZZ$. Hence the previous $f|$ can be further extended as $f|: X^{(2)}_{K_\cpn}\to X_{T_{\tt std}}$ as the boundary
            of any $2$-cell is mapped to a null-homotopic loop in $X_{T_{\tt std}}$ by the construction.

            Thus we obtain a map $f:Y\cup X^{(2)}_{K_\cpn}\to X_{\hat{K}_\pat}$ by the map above and the identity on $Y$.
            Let $j:F\looparrowright X_K$ be an immersed compact orientable
            surface such that $j(\partial F)\subset \partial X_K$. We may assume $F$ meets $\partial_\cpn Y$ transversely.
            We homotope $j$ to $j':F \to Y\cup X^{(2)}_{K_\cpn}$. Then we obtain a map $f\circ j':F\to
            X_{\hat{K}_\pat}$ which may be homotoped to an immersion. As $F$ is arbitrary, this clearly impies
            $\norm{\gamma}_K\geq \norm{\gamma}_{\hat{K}_\pat}$ by the definition of the seminorm.
        \end{proof}

        Now we proceed to consider the case when $w(K_\pat)\neq 0$. The image of $\pt\times \pt\times\partial D^2\subset
        Y$ under the natural inclusion $Y\subset X_K$ will be denoted $\mu_\cpn$. We call $\mu_\cpn$ the \emph{companion meridian}.
        The following lemma follows immediately from the construction:

        \begin{lemma}\label{cpnMrdn}
            Identify $H_1(X_{K_\cpn})\cong\ZZ$ and $H_1(X_K)\cong\ZZ$, then
            $H_1(X_{K_\cpn})\to H_1(X_K)$ is the mulplication by $w(K_\pat)$.
        \end{lemma}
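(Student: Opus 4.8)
The plan is to identify both first homology groups with $\ZZ$ using meridians, and then to track where the generator of $H_1(X_{K_\cpn})$ is sent under the inclusion $X_{K_\cpn}\hookrightarrow X_K$ coming from the decomposition $X_K=Y\cup X_{K_\cpn}$. By Lemma~\ref{bounding} applied both to $K_\cpn$ and to $K$ (each null-homologous in $S^4$), we have $H_1(X_{K_\cpn})\cong\ZZ$ and $H_1(X_K)\cong\ZZ$, generated respectively by the meridians $\mu$ of $K_\cpn$ and $\mu_K$ of $K$. Equivalently, each isomorphism is realized by the linking number with the corresponding knotted torus, so that a $1$--cycle $z$ represents $\mathrm{lk}(z,K_\cpn)\cdot[\mu]$ and $\mathrm{lk}(z,K)\cdot[\mu_K]$ respectively. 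The meridian $\mu=\pt\times\pt\times\partial D^2$ lies on $\partial X_{K_\cpn}=\partial_\cpn Y$, and under the inclusion it is carried exactly to the companion meridian $\mu_\cpn$. Hence the induced homomorphism $H_1(X_{K_\cpn})\to H_1(X_K)$ sends the generator $[\mu]$ to $[\mu_\cpn]$, and it remains only to express $[\mu_\cpn]$ as a multiple of $[\mu_K]$.

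First I would compute this multiple as the linking number $\mathrm{lk}(\mu_\cpn,K)$. The natural bounding chain for $\mu_\cpn$ is the fiber disk $D=\pt\times\pt\times D^2\subset\thicktorus\cong\mathcal{N}(K_\cpn)$, whose boundary is precisely $\mu_\cpn$. Since $K$ sits inside $\mathcal{N}(K_\cpn)$ as the image of the pattern $K_\pat$, the algebraic intersection number $D\cdot K$ equals the algebraic intersection of $[K_\pat]$ with $[\pt\times\pt\times D^2]$ in $\thicktorus$, which is by definition the winding number $w(K_\pat)$. Therefore $\mathrm{lk}(\mu_\cpn,K)=w(K_\pat)$, so that $[\mu_\cpn]=w(K_\pat)\cdot[\mu_K]$ in $H_1(X_K)\cong\ZZ$.

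Combining the two steps, under the chosen identifications the generator $1\in H_1(X_{K_\cpn})\cong\ZZ$ is sent to $w(K_\pat)\in H_1(X_K)\cong\ZZ$, so the map is multiplication by $w(K_\pat)$, as claimed. The only point requiring care is the consistency of the two linking-number identifications with the abstract generators furnished by Lemma~\ref{bounding}: one must check that the generator $\alpha\in H^1(X_K)$ dual to $[\mu_K]$ evaluates on a $1$--cycle as its linking number with $K$, so that the fiber-disk intersection count genuinely records the coefficient of $[\mu_\cpn]$. This is routine Poincar\'e--Lefschetz duality in $S^4$ and constitutes the main (and quite minor) obstacle; everything else follows directly from the satellite construction, which is why the statement can be asserted to hold immediately.
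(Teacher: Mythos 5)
Your proof is correct and takes essentially the same route as the paper's: both identify the generator of $H_1(X_{K_\cpn})$ with the companion meridian $\mu_\cpn$, use the fiber disk $\pt\times\pt\times D^2$ (which meets $K$ algebraically $w(K_\pat)$ times by the definition of winding number) to show $[\mu_\cpn]=w(K_\pat)\cdot[\mu_K]$ in $H_1(X_K)$, and invoke Alexander duality to see that the meridian of $K$ generates $H_1(X_K)\cong\ZZ$. Your linking-number bookkeeping merely makes explicit what the paper compresses into the phrase ``by definition of $w(K_\pat)$.''
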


        \begin{proof}
            Note $\mu_\cpn$ represents a generator of $H_1(X_{K_\cpn})$.
            By definition of $w(K_\pat)$, $\mu_\cpn$ is homologous to $w(K_\pat)$ times the meridian of $K$.
            The lemma follows as the meridian of $K$ generates $H_1(X_K)\cong\ZZ$ by the Alexander duality.
        \end{proof}

        \begin{lemma}\label{patBdry}
            If $w(K_\pat)\neq 0$, then the inclusion map $\partial_\cpn Y\subset Y$
            induces an injective homomorphism $H_1(\partial_\cpn Y)\to H_1(Y)$. In particular, the inclusion map
            $\partial_\cpn Y\subset Y$ is $\pi_1$-injective.
        \end{lemma}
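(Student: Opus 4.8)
The plan is to prove the homological statement first and then read off the $\pi_1$-statement for free. Since $\partial_\cpn Y$ is the image of $\partial\thicktorus=T^2\times S^1$, it is a $3$-torus; in particular $\pi_1(\partial_\cpn Y)\cong\ZZ^3$ is abelian, so the Hurewicz map $\pi_1(\partial_\cpn Y)\to H_1(\partial_\cpn Y)$ is an isomorphism. By naturality the composite $\pi_1(\partial_\cpn Y)\to\pi_1(Y)\to H_1(Y)$ agrees with $H_1(\partial_\cpn Y)\to H_1(Y)$ under this identification, so any element killed by $\pi_1(\partial_\cpn Y)\to\pi_1(Y)$ is already killed on $H_1$. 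Hence $H_1$-injectivity automatically upgrades to $\pi_1$-injectivity, and it suffices to show that $H_1(\partial_\cpn Y)\to H_1(Y)$ is injective when $w(K_\pat)\neq0$.

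To set up generators I would first compute $H_1(Y)$. Write $\mathcal{N}=\mathcal{N}(K_\pat)$; since $[K_\pat]^2=w(K_\pat)^2[T^2]^2=0$ in $\thicktorus$, the normal bundle of $K_\pat$ is trivial and $(\mathcal{N},\partial\mathcal{N})\cong(T^2\times D^2,\,T^2\times S^1)$. Excision together with the Thom isomorphism identifies $H_*(\thicktorus,Y)\cong H_{*-2}(K_\pat)$, so the long exact sequence of the pair $(\thicktorus,Y)$ reads
$$H_2(\thicktorus)\xrightarrow{\ j\ }H_2(\thicktorus,Y)\cong\ZZ\longrightarrow H_1(Y)\xrightarrow{\ i_*\ }H_1(\thicktorus)\cong\ZZ^2\longrightarrow0.$$
The map $j$ sends the generator $[T^2]$ of $H_2(\thicktorus)$ to $([T^2]\cdot[K_\pat])\,[D]$, where $[D]$ is the class of a fiber disk of $\mathcal{N}$; this vanishes because $[T^2]\cdot[K_\pat]=w(K_\pat)[T^2]^2=0$. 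Hence $0\to\ZZ\to H_1(Y)\to\ZZ^2\to0$ is short exact, the connecting map carries the generator to the pattern meridian $\mu_\pat$ (an infinite-order element generating $\ker i_*$), and $i_*$ surjects onto $H_1(\thicktorus)=\langle a,b\rangle$.

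Next I would analyze $\phi\colon H_1(\partial_\cpn Y)=\langle a,b,\mu_\cpn\rangle\to H_1(Y)$. Since $a,b$ survive to generators of $H_1(\thicktorus)$, we have $i_*\phi(a)=a$ and $i_*\phi(b)=b$. The crucial computation is $\phi(\mu_\cpn)$: the companion meridian $\mu_\cpn=\pt\times\pt\times\partial D^2$ bounds the fiber disk $D_0=\pt\times\pt\times D^2$ in $\thicktorus$, and by the very definition of the winding number $D_0$ meets $K_\pat$ in $w(K_\pat)$ points algebraically, so $D_0\cap Y$ exhibits $\mu_\cpn$ as homologous to $\pm\,w(K_\pat)\,\mu_\pat$ in $H_1(Y)$. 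Injectivity then follows by a chase: if $\phi(pa+qb+r\mu_\cpn)=0$, applying $i_*$ gives $pa+qb=0$, whence $p=q=0$; then $r\,w(K_\pat)\,\mu_\pat=0$ forces $r=0$ because $\mu_\pat$ has infinite order and $w(K_\pat)\neq0$.

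The main obstacle is precisely the identification $\phi(\mu_\cpn)=\pm w(K_\pat)\,\mu_\pat$, which is the single step that consumes the hypothesis $w(K_\pat)\neq0$; everything else is formal homological algebra. Care is needed to compare the two meridians—$\mu_\cpn$ on $\partial\thicktorus$ versus $\mu_\pat$ on $\partial\mathcal{N}$—through the punctured fiber disk, and to carry out the orientation bookkeeping so that the resulting coefficient is genuinely the winding number and not some other multiple of it.
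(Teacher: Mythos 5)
Your proof is correct, but it takes a genuinely different route from the paper's. The paper never computes $H_1(Y)$: it observes via the long exact sequence of $(Y,\partial_\cpn Y)$ that injectivity follows once $H_2(Y,\partial_\cpn Y)$ is finite (since $H_1(\partial_\cpn Y)\cong\ZZ^3$ is torsion-free), then identifies $H_2(Y,\partial_\cpn Y)\cong H^2(Y,\partial_\sat Y)\cong H^2(\thicktorus,K_\pat)$ by Poincar\'e--Lefschetz duality and excision, and finishes with the cohomology long exact sequence of the pair $(\thicktorus,K_\pat)$: the hypothesis $w(K_\pat)\neq 0$ enters only through the fact that the induced map $h\as T^2\to T^2$ has nonzero degree, hence $h^*$ is injective on cohomology, forcing $H^2(\thicktorus,K_\pat)$ to be finite. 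You instead work with the pair $(\thicktorus,Y)$, use excision and the Thom isomorphism to get $H_2(\thicktorus,Y)\cong\ZZ$ generated by the fiber disk, deduce the split short exact sequence $0\to\ZZ\langle\mu_\pat\rangle\to H_1(Y)\to H_1(\thicktorus)\to 0$ (the connecting map is injective because the core torus has zero intersection with $[K_\pat]$), and then track generators, with the hypothesis entering through the identity $\phi(\mu_\cpn)=\pm w(K_\pat)\,\mu_\pat$ read off from the punctured fiber disk. Both arguments are sound; each uses $w\neq 0$ at exactly one step, and your reduction of $\pi_1$-injectivity to $H_1$-injectivity via Hurewicz on the $3$-torus is the same as the paper's implicit ``in particular.'' The trade-off: the paper's duality argument is shorter and sidesteps normal-bundle trivialization, meridian identifications, and all orientation bookkeeping; yours is more hands-on but yields strictly more information --- an explicit $H_1(Y)\cong\ZZ^3$, the relation $\mu_\cpn\sim\pm w\,\mu_\pat$ (which is the same geometry underlying Lemma~\ref{cpnMrdn}), and the identification of the cokernel of $H_1(\partial_\cpn Y)\to H_1(Y)$ as $\ZZ/w$, quantifying exactly how far the inclusion is from being a homology equivalence.
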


        \begin{proof}
            By the long exact sequence:
                $$\cdots\to H_2(Y,\partial_\cpn Y)\to H_1(\partial_\cpn Y)\to H_1(Y)\to \cdots,$$
            it suffices to show $H_2(Y,\partial_\cpn Y)$ is finite, since $H_1(\partial_\cpn Y)\cong H_1(\partial\thicktorus)$ is torsion-free.
            By the Poincar\'{e}--Lefschetz duality and excision,
                $$H_2(Y,\partial_\cpn Y)\cong H^2(Y,\partial_\sat Y)\cong H^2(\thicktorus,K_\pat).$$

            The long exact sequence:
                $$\cdots\to H^1(\thicktorus)\to H^1(K_\pat)\to H^2(\thicktorus,K_\pat)\to H^2(\thicktorus)\to H^2(K_p)\to\cdots,$$
            is induced by the inclusion $K_\pat\subset \thicktorus$, (or equivalently by $K_\pat:T^2\hookrightarrow\thicktorus$).
            Since $\thicktorus\simeq T^2$, $K_\pat$ induces a map $h:T^2\to T^2$. It is also clear that $w(K_\pat)$ is the degree
            of $h$. Since $w(K_\pat)\neq 0$, it is clear that the map $h^*:H^*(T^2)\to H^*(T^2)$ is injective on all dimensions, so must
            be $H^*(\thicktorus)\to H^*(K_\pat)$. Thus
            $H^2(\thicktorus,K_\pat)$ is finite from the long exact sequence. We conclude $H_2(Y,\partial_\cpn Y)$ is finite as desired.
        \end{proof}

        Note it suffices to prove Theorem~\ref{Schubert} for $\gamma\in H_1(T^2;\ZZ)$. Remember that we regard
        $\gamma$ as in $H_1(K)$, identified as the kernel of $H_1(\partial X_K)\to H_1(X_K)$.
        For any $\epsilon>0$, let $j:F\looparrowright X_K$ be a properly immersed orientable compact (possibly disconnected)
        surface, i.e. $j^{-1}(\partial X_K)=\partial F$, such that $j_*[\partial F]=m\,\gamma$ for some integer $m>0$,
        and that:
            $$\norm{\gamma}_{K}\leq \frac{x(F)}{m}<\norm{\gamma}_{K}+\epsilon.$$
        We may assume $F$ has no disk or closed component,
        so the complexity $x(F)=-\chi(F)$. We may also assume $F$ intersects $\partial_\cpn Y$ transversely,
        so $j^{-1}(\partial_\cpn Y)$ is a disjoint union of simple closed curves on $F$.
        Write $F_\pat$, $F_\cpn$ for $j^{-1}(Y)$, $j^{-1}(X_{K_\cpn})$, respectively.

        \begin{lemma}\label{pushOff}
            Suppose $w(K_\pat)\neq 0$. If $V$ is a component of $F_\pat$ such that $j(\partial V)\subset
            \partial_\cpn Y$, then there is a map $j'|:V\to\partial_\cpn Y$, such that
            $j'|_{\partial V}=j$.
        \end{lemma}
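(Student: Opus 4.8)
The plan is to exploit the fact that the companion boundary $\partial_\cpn Y$ is the image of $\partial\thicktorus = S^1\times S^1\times\partial D^2$, hence homeomorphic to the $3$--torus, which is aspherical with $\pi_1\cong\ZZ^3$ abelian and $\pi_k=0$ for $k\ge 2$. Since $V$ is a single component of $F_\pat$ and $F$ was arranged to have no closed components, $V$ is a connected, orientable, compact surface with nonempty boundary; thus $\pi_1(V)$ is free and $V$ is itself aspherical. Extending the boundary map $j|_{\partial V}\colon\partial V\to\partial_\cpn Y$ over $V$ is therefore governed entirely by $\pi_1$: because $\partial_\cpn Y$ is a $K(\ZZ^3,1)$, the higher obstructions vanish automatically, and the problem reduces to producing a homomorphism $\phi\colon\pi_1(V)\to\pi_1(\partial_\cpn Y)\cong\ZZ^3$ that carries each boundary curve to the class already determined by $j$, followed by realizing $\phi$ geometrically.

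Concretely, I would fix a basepoint on $\partial V$ and use the standard presentation of the free group $\pi_1(V)$ on the genus generators $a_1,b_1,\dots,a_g,b_g$ together with all but one of the based boundary curves $\hat c_1,\dots,\hat c_{b-1}$, so that the remaining boundary curve satisfies the single surface relation, say $\hat c_b=\bigl(\prod_j[a_j,b_j]\,\prod_{i<b}\hat c_i\bigr)^{-1}$. Writing $\bar c_i\in H_1(\partial_\cpn Y)\cong\ZZ^3$ for the class represented by $j|_{c_i}$, I may set $\phi(a_j)=\phi(b_j)=0$ and $\phi(\hat c_i)=\bar c_i$ for $i<b$, since these are free generators. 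As the target is abelian, the commutators die, so $\phi(\hat c_b)$ is forced to be $-\sum_{i<b}\bar c_i$; hence $\phi$ respects all the boundary data precisely when the total class $\sum_{i=1}^{b}\bar c_i$ vanishes in $H_1(\partial_\cpn Y)$.

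The crux, and the only place the hypothesis $w(K_\pat)\neq 0$ enters, is verifying this vanishing. Observe that $\sum_i\bar c_i$ is exactly the class $[j(\partial V)]\in H_1(\partial_\cpn Y)$. Its image under the inclusion-induced map $H_1(\partial_\cpn Y)\to H_1(Y)$ is the class of $j(\partial V)$ in $H_1(Y)$, which is zero because $j(\partial V)$ bounds the $2$--chain $j(V)$ there. By Lemma~\ref{patBdry}, the assumption $w(K_\pat)\neq 0$ forces $H_1(\partial_\cpn Y)\to H_1(Y)$ to be injective, so $\sum_i\bar c_i=0$ already in $H_1(\partial_\cpn Y)$, and the desired $\phi$ exists.

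Finally I would realize $\phi$ by a map $V\to\partial_\cpn Y$ using asphericity of the target, and then invoke the homotopy extension property to adjust it to agree with $j$ on $\partial V$ on the nose, which is possible because $\phi(\hat c_i)=\bar c_i$ already matches the free homotopy class of each $j|_{c_i}$. The main obstacle I anticipate is not the homological vanishing, which is immediate once Lemma~\ref{patBdry} is in hand, but rather the bookkeeping of basepoints and connecting paths in the free presentation, together with ensuring that the constructed map can be homotoped to equal $j$ \emph{rel} $\partial V$ rather than merely up to homotopy; once the total boundary class is shown to vanish, these are routine.
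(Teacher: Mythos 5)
Your proof is correct and takes essentially the same route as the paper: both reduce the extension problem to the vanishing of $[j(\partial V)]$ in $H_1(\partial_\cpn Y)$, which holds because this class bounds $j_*[V]$ in $Y$ and the map $H_1(\partial_\cpn Y)\to H_1(Y)$ is injective by Lemma~\ref{patBdry} (this is where $w(K_\pat)\neq 0$ enters), and both then use that $\pi_1(\partial_\cpn Y)\cong\ZZ^3$ is abelian to convert the homological vanishing into a homotopical extension. The only difference is packaging: the paper cuts $V$ into a single disk along arcs and extends cell by cell, while you use a free presentation of $\pi_1(V)$, asphericity of the $3$--torus, and the homotopy extension property, which amounts to the same argument.
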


        \begin{proof}
            We may take a collection of embedded arcs
            $u_1,\cdots,u_{n}$ whose endpoints lie on $\partial V$, cutting
            $V$ into a disk $D$. This gives a cellular
            decomposition of $V$. We may first extend the map $j|_{\partial V}:\partial V\to \partial_\cpn Y$ to a map
            $j'|_{V^{(1)}}$ over the $1$-skeleton of $V$. Let $\phi:\partial D\to
            V^{(1)}$ be the attaching map. We have $j'_*\phi_*[\partial D]=
            j_*[\partial V]$ in $H_1(\partial_\cpn Y)$ by the construction. As
            $w(K_\pat)\neq 0$, by Lemma~\ref{patBdry}, $H_1(\partial_\cpn Y)\to
            H_1(Y)$ is an injective homomorphism, so $j_*[\partial V]=0$ in
            $H_1(\partial_\cpn Y)$ since it is bounded by $j_*[V]$. Thus
            $j'_*\phi_*[\partial D]=0$ in $H_1(\partial_\cpn Y)$, and hence
            $\partial D$ is null-homotopic in $\partial_\cpn Y$ under
            $j'\circ\phi$ as $\pi_1(\partial_\cpn Y)\cong H_1(\partial_\cpn Y)$,
            (remember $\partial_\cpn Y\cong \partial\thicktorus$ is a
            $3$-torus). Therefore, we may extend $j'|_{V^{(1)}}$ further over $D$
            to obtain $j':V\to\partial_\cpn Y$ as desired.
        \end{proof}

        \begin{lemma}\label{normalF}
            We may modify $j:F\looparrowright X_K$ within the interior of $F$, so that every component
            of $j^{-1}(\partial_\cpn Y)$ that is inessential on $F$ bounds a
            disk component of $j^{-1}(X_{K_\cpn})$.
        \end{lemma}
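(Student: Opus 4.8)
The plan is to realize the modification as a finite sequence of homotopies of $j$ supported in the interior of $F$, each step guided by an innermost-disk argument on the pattern side. Throughout I view $j^{-1}(\partial_\cpn Y)$ as a disjoint union of circles on $F$ cutting it into the regions $F_\pat=j^{-1}(Y)$ and $F_\cpn=j^{-1}(X_{K_\cpn})$, which are checkerboard-colored in the sense that regions sharing a circle lie on opposite sides of $\partial_\cpn Y$.

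First I would treat an innermost inessential curve $\alpha\subset j^{-1}(\partial_\cpn Y)$, i.e.\ one bounding a disk $D\subset F$ whose interior meets no other curve of $j^{-1}(\partial_\cpn Y)$; such a $D$ is a disk component of either $F_\pat$ or $F_\cpn$. If $D\subset F_\cpn$, then $\alpha$ already bounds a disk component of $F_\cpn$ and nothing needs to be done. If instead $D\subset F_\pat$, then $j|_D$ is a singular disk in $Y$ with $j(\alpha)\subset\partial_\cpn Y$, so $j|_\alpha$ is null-homotopic in $Y$. Here the key tool is Lemma~\ref{patBdry} (which is exactly where the hypothesis $w(K_\pat)\neq 0$ enters): it gives that $\partial_\cpn Y\hookrightarrow Y$ is $\pi_1$-injective, whence $j|_\alpha$ is already null-homotopic in the $3$-torus $\partial_\cpn Y$. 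I can then homotope $j$ in a neighborhood of $D$ in $F$, pushing $j(D)$ through the product collar of $\partial_\cpn Y$ to the $X_{K_\cpn}$ side; since $j|_\alpha$ bounds a disk in $\partial_\cpn Y$, this can be done so that the new map meets $\partial_\cpn Y$ in one fewer circle. Iterating, the number of circles of $j^{-1}(\partial_\cpn Y)$ cannot drop forever, so after finitely many steps every innermost inessential curve bounds a disk component of $F_\cpn$.

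The remaining work, and what I expect to be the main obstacle, is to upgrade this to the statement for \emph{all} inessential curves, that is, to rule out a nested configuration in which an inessential curve $\alpha$ encloses innermost curves $\beta_1,\dots,\beta_k$ bounding disk components of $F_\cpn$ while the intervening planar region $R\subset F_\pat$ lies on the $Y$ side. The asymmetry is essential: Lemma~\ref{patBdry} supplies $\pi_1$-injectivity only on the $Y$ side, so one cannot simply push the inner $F_\cpn$-disks back across into $Y$. Instead I would push the whole planar region $R$ across $\partial_\cpn Y$ at once: because $\partial R$ already maps into $\partial_\cpn Y$ and $\partial_\cpn Y\hookrightarrow Y$ is $\pi_1$-injective, one first deforms $j|_R$ rel $\partial R$ into $\partial_\cpn Y$ and then slides it through the collar into $X_{K_\cpn}$, merging $R$ with the adjacent $F_\cpn$ pieces and eliminating $\alpha,\beta_1,\dots,\beta_k$ simultaneously. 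Organizing these moves by induction on the number of intersection circles, always handling an innermost offending configuration first, should then yield the asserted normal form. The delicate point is precisely the deformation of the planar region $R$ into $\partial_\cpn Y$ rel $\partial R$: the $\pi_1$-injectivity handles the $1$-skeleton, but controlling the $2$-cell of $R$ (equivalently, the relative obstruction in $\pi_2(Y,\partial_\cpn Y)$) is where the planarity of $R$ and the structure of $Y$ as the pattern complement must be combined carefully, and this is the step I would expect to require the most care.
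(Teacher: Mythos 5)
Your overall strategy --- an inductive argument that pushes pattern-side regions of $F$ across $\partial_\cpn Y$ into $X_{K_\cpn}$, with Lemma~\ref{patBdry} as the input and a decreasing count of intersection circles as the induction --- is exactly the paper's, and your innermost-disk step is correct in substance. But the step you flag as ``delicate'' is the entire content of the lemma, and your framing of it is the wrong one: you propose to \emph{homotope} $j|_R$ rel $\partial R$ into $\partial_\cpn Y$, a relative compression problem whose obstruction lives in $\pi_2(Y,\partial_\cpn Y)$. Nothing in the available lemmas controls that obstruction ($Y$ is just some pattern exterior, and you have no asphericity statement for it), and for a planar region $R$ with several boundary circles, $\pi_1$-injectivity of $\partial_\cpn Y\subset Y$ by itself says nothing useful: the individual components of $\partial R$ need not be null-homotopic, or even null-homologous, in $Y$. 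So as written the key step is a genuine gap, and pursuing it as a homotopy-rel-boundary problem may well be impossible in general.

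The missing idea (this is the paper's Lemma~\ref{pushOff}) is that no homotopy is needed at all. The conclusion of Lemma~\ref{normalF} only asks for a \emph{modification} of $j$ in the interior of $F$: everything used downstream --- that $j_*[\partial F]=m\gamma$, the complexity $x(F)$, and the decomposition of $F$ along $j^{-1}(\partial_\cpn Y)$ --- is insensitive to the homotopy class of $j$ rel $\partial F$. So one simply \emph{replaces} $j|_R$ by any map $R\to\partial_\cpn Y$ with the same boundary values, built cell by cell: cut $R$ into a single disk along arcs, map the arcs into $\partial_\cpn Y$ arbitrarily extending $j|_{\partial R}$, and extend over the one remaining $2$-cell. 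The obstruction to this last extension is the homotopy class of the attaching loop in $\partial_\cpn Y$; its homology class equals $j_*[\partial R]$, which vanishes in $H_1(\partial_\cpn Y)$ because it bounds $j_*[R]$ in $Y$ and $H_1(\partial_\cpn Y)\to H_1(Y)$ is injective (Lemma~\ref{patBdry}, where $w(K_\pat)\neq 0$ enters); and since $\partial_\cpn Y$ is a $3$-torus, $\pi_1\cong H_1$ there, so null-homologous implies null-homotopic. Note this argument is homological precisely because the curves of $\partial R$ only bound \emph{jointly}, not individually. With this replacement in hand your induction closes up; the paper in fact organizes it around an arbitrary inessential curve $a$, pushing all pattern components inside the disk bounded by $a$ at once rather than working innermost-first, but that difference is cosmetic.
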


        \begin{proof}
            Let $a\subset j^{-1}(\partial_\cpn Y)$ be a component inessential on $F$, and $D\subset F$ be an embedded disk whose
            boundary is $a$. Suppose $D$ is not contained in $F_\cpn$, then $D\cap F_\pat\ne\emptyset$. Any component of $D\cap F_\pat$ must have all
            its boundary components lying on $j^{-1}(\partial_\cpn Y)$. By Lemma~\ref{pushOff}, we may redefine $j$ on these components relative to
            boundary so that they are all mapped into $X_\cpn$. After this modification and a small perturbation, either $a$ disappears from
            $j^{-1}(\partial_\cpn Y)$ (if $\partial D\subset D\cap F_\pat$), or at least one component of $j^{-1}(\partial_\cpn Y)$ in the interior of
            $D$ disappears (if $\partial D\subset D\cap F_\cpn$). Thus
            the number of inessential components of $j^{-1}(\partial_\cpn Y)$ decreases strictly
            under this modification. Therefore, after at most finitely many such modifications, every inessential component
            of $j^{-1}(\partial_\cpn Y)$ bounds a disk component of $F_\cpn$.
        \end{proof}

        Without loss of generality, we assume that $j:F\looparrowright X_K$ satisfies the conclusion of Lemma~\ref{normalF}.

        \begin{lemma}\label{covering}
            There is a finite cyclic covering $\kappa:\tilde{F}\to F$ such that for every essential component
            $a\in j^{-1}(\partial_\cpn Y)$ with $[j(a)]\neq 0$ in $H_1(X_K)$, and every component $\tilde{a}$ of $\kappa^{-1}(a)$,
            the image $j(\kappa(\tilde{a}))$ represents the same element
            in $H_1(X_K)\cong\ZZ$ up to sign.
        \end{lemma}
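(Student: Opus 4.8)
The plan is to build $\kappa$ from the homomorphism to $H_1(X_K)\cong\ZZ$ that $j$ induces, and to pin its degree down as a suitable least common multiple so that the \emph{a priori} varying covering degrees of the individual lifts all conspire to yield one and the same homology class. The first thing I would record is the finiteness underlying the whole construction: since $F$ is compact and meets $\partial_\cpn Y$ transversely, the preimage $j^{-1}(\partial_\cpn Y)$ consists of only finitely many simple closed curves, so only finitely many values $[j(a)]\in H_1(X_K)\cong\ZZ$ arise among the essential components $a$, and an $\mathrm{lcm}$ over them is legitimate.

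Next, working on one connected component of $F$ at a time, I would consider the homomorphism $\phi=j_*\colon H_1(F)\to H_1(X_K)\cong\ZZ$. Writing its image as $m\ZZ$, factor $\phi=m\,\phi_0$ with $\phi_0\colon H_1(F)\twoheadrightarrow\ZZ$ a primitive class, so that $[j(a)]=m\,\phi_0([a])$ for every loop $a$. Let $L$ be the least common multiple of the nonzero integers $|\phi_0([a])|$ as $a$ ranges over the finitely many essential components of $j^{-1}(\partial_\cpn Y)$ with $[j(a)]\neq 0$, and let $\kappa\colon\tilde F\to F$ be the connected $L$--fold cyclic covering classified by the reduction $\phi_0\bmod L\colon\pi_1(F)\twoheadrightarrow\ZZ/L$ (the composite with abelianization), which is connected precisely because $\phi_0$ is primitive.

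The heart of the argument is then the following degree computation. A component $\tilde a$ of $\kappa^{-1}(a)$ covers $a$ with degree equal to the order of $\phi_0([a])\bmod L$ in $\ZZ/L$; since $|\phi_0([a])|$ divides $L$ by the choice of $L$, this order is exactly $L/|\phi_0([a])|$. Hence $[j(\kappa(\tilde a))]=\bigl(L/|\phi_0([a])|\bigr)\,[j(a)]=\pm mL$, which is independent of both $a$ and $\tilde a$. Thus on a single component the asserted uniformity holds with class $\pm mL$.

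Finally, to upgrade this to a single class valid across a possibly disconnected $F$, I would run the above on each component $F_i$, where $\phi|_{F_i}=m_i\phi_{0,i}$ and $L_i$ is the local $\mathrm{lcm}$, but choose the cyclic cover of $F_i$ to have degree $d_i=N/m_i$, where $N$ is any common multiple of all the products $m_iL_i$; then $d_i$ is a multiple of $L_i$, so the same computation gives every relevant lift the class $\pm m_i d_i=\pm N$, uniformly over all components, while on components where $\phi$ vanishes the stated condition is vacuous. The main obstacle I anticipate is exactly this bookkeeping: the lift of $a$ scales $[j(a)]$ by a covering degree inversely proportional to $|\phi_0([a])|$, so the individual products coincide only after the degree is taken to be a common multiple, and equalizing across components with different primitivity indices $m_i$ is what forces the cover's degree to vary from component to component.
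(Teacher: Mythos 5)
Your proposal is correct and takes essentially the same route as the paper: the paper sets $d$ equal to the least common multiple of the nonzero classes $[j(a_i)]\in H_1(X_K)\cong\ZZ$ and takes $\kappa$ to be the covering corresponding to the preimage of $d\cdot H_1(X_K)$ under $\pi_1(F)\to\pi_1(X_K)\to H_1(X_K)$, which is exactly your cyclic cover pulled back from $\ZZ$, with your lift-degree computation compressed into ``straightforward to check.'' The only difference is bookkeeping: since the paper takes the lcm of the classes $[j(a_i)]$ themselves rather than of the normalized values $\phi_{0,i}([a])$, each lift automatically represents $\pm d$ uniformly across all components of $F$, so the per-component factorization $\phi=m_i\phi_{0,i}$ and your equalizing common multiple $N$ are not needed.
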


        \begin{proof}
            Let $a_1,\cdots,a_s$ be all the essential components $j^{-1}(\partial_\cpn Y)$ such that $[j(a_i)]\neq 0$
            in $H_1(X_K)\cong\ZZ$. Let $d>0$ be the least common multiple of all the $[j(a_i)]$'s. Consider the covering $\kappa:\tilde{F}
            \to F$ corresponding to the preimage of the subgroup $d\cdot H_1(X_K)$ under $\pi_1(F)\to\pi_1(X_K)\to H_1(X_K)$. It
            is straightforward to check that $\kappa$ satisfies the conclusion.
        \end{proof}

        Let $\kappa:\tilde{F}\to F$ be a covering as obtained in Lemma~\ref{covering}. Let $d>0$ be the degree of $\kappa$,
        so $x(\tilde{F})=d\,x(F)$.
        Clearly $j_*\kappa_*[\partial \tilde{F}]=md\,\gamma$, and also:
            $$\norm{\gamma}_{K}\leq\frac{x(\tilde{F})}{md}<\norm{\gamma}_{K}+\epsilon.$$
        Moreover, as any inessential component of $j^{-1}(\partial_\cpn Y)$ bounds a disk component of $F_\cpn$, it is clear
        that any inessential component of $(j\circ\kappa)^{-1}(\partial_\cpn Y)$ bounds a disk component of $\tilde{F}_\cpn=\kappa^{-1}(F_\cpn)$.

        Therefore, instead of using $j:F\looparrowright X_K$, we may use $j\circ\kappa:\tilde{F}\looparrowright X_K$ as well.
        From now on, we rewrite $j\circ\kappa$ as $j$, $\tilde{F}$ as $F$, and $md$ as $m$,
        so $j:F\looparrowright X_K$ satisfies the conclusions of Lemmas~\ref{normalF},~\ref{covering}.

        Let $Q\subset F_\cpn$ be the union of the disk components of $F_\cpn$.
        Let $F'_\cpn$ be $F_\cpn-Q$, and $F'_\pat$ be $F_\pat\cup Q$ (glued up along adjacent boundary
        components). We have the decompositions:
            $$F=F_\pat\cup F_\cpn=F'_\pat\cup F'_\cpn.$$
        Moreover, there is no inessential component of $\partial F'_\cpn$ by our assumption on $F$,
        so $F'_\cpn$ and $F'_\pat$ are essential subsurfaces
        of $F$ (i.e. whose boundary components are essential).

        \begin{lemma}\label{cutCxty}
            Suppose $F$ is a compact orientable surface with no disk or sphere component,
            and $E_1,E_2$ are essential compact subsurfaces of $F$ with disjoint interiors such that $F=E_1\cup E_2$.
            Then $x(F)= x(E_1)+ x(E_2)$.
        \end{lemma}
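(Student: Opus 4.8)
The plan is to reduce the statement to the additivity of the Euler characteristic under gluing along circles, which is the only nontrivial input, together with a careful check that the truncation in the definition of $\chi_{-}$ never actually kicks in for the surfaces at hand. The point is that the complexity $x(\cdot)$ coincides with $-\chi(\cdot)$ on any surface all of whose components have nonpositive Euler characteristic; so once I know that $F$, $E_1$, and $E_2$ all enjoy this property, additivity of $x$ becomes nothing more than additivity of $\chi$.

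First I would analyze the gluing locus. Since $E_1$ and $E_2$ have disjoint interiors and satisfy $F=E_1\cup E_2$, their intersection $\Gamma=E_1\cap E_2$ is a disjoint union of simple closed curves lying in the interior of $F$, each being a common boundary component of $E_1$ and $E_2$; by hypothesis these curves are essential in $F$. As $\Gamma$ is a union of circles we have $\chi(\Gamma)=0$, so inclusion--exclusion (using a CW structure in which $E_1$, $E_2$, and $\Gamma$ are subcomplexes) gives
$$\chi(F)=\chi(E_1)+\chi(E_2)-\chi(\Gamma)=\chi(E_1)+\chi(E_2).$$

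Next I would show that none of $F$, $E_1$, $E_2$ has a component of positive Euler characteristic, i.e.\ a disk or a sphere. For $F$ this is assumed. For $E_i$, a sphere component is closed and hence a union of components of $F$, which would force a sphere component of $F$ and is excluded; a disk component would have its single boundary circle either a curve of $\Gamma$, which is essential yet bounds that disk, a contradiction, or an entire boundary circle of $F$, making $F$ itself have a disk component, again excluded. Therefore every component of each of $F$, $E_1$, $E_2$ has nonpositive Euler characteristic, so on each surface $\chi_{-}$ agrees with $-\chi$ componentwise, giving $x(F)=-\chi(F)$, $x(E_1)=-\chi(E_1)$, and $x(E_2)=-\chi(E_2)$.

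Combining these three identities with the displayed equation yields
$$x(F)=-\chi(F)=-\chi(E_1)-\chi(E_2)=x(E_1)+x(E_2),$$
which is the claim. I expect the only genuinely delicate point to be the exclusion of disk components of the $E_i$: this is precisely where the essentiality hypothesis on the subsurfaces, together with the assumption that $F$ has no disk component, is used, and it is exactly what prevents the $\chi_{-}$-truncation from spoiling additivity. Everything else is formal.
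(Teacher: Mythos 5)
Your proof is correct and follows essentially the same route as the paper's: additivity of the Euler characteristic under gluing along circles, combined with the observation that essentiality of the $E_i$ and the absence of disk or sphere components of $F$ rule out any component on which $\chi_{-}$ differs from $-\chi$. The paper's own proof is just a terser version of this argument, leaving implicit the case analysis you spell out for excluding disk and sphere components of the $E_i$.
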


        \begin{proof}
            Note $\chi(F)=\chi(E_1)+\chi(E_2)$.
            As each $E_i$ is essential, there is no disk component of $E_i$, and by the assumption there is no sphere component,
            either. Thus, for each component $C$ of $E_i$, $x(C)=-\chi(C)$. We have $x(F)=x(E_1)+x(E_2)$.
        \end{proof}

        The desatellite term in Theorem~\ref{Schubert} comes from the following construction.

        \begin{lemma}\label{hatJ}
            Under the assumptions above, there is a properly immersed compact orientable surface
            $\hat{j}:\hat{F}'_\pat\looparrowright X_{\hat{K}_\pat}$
            such that $x(\hat{F}'_\pat)\leq x(F'_\pat)$, and that $\hat{j}_*[\partial \hat{F}'_\pat]=m\gamma$ in $H_1(T^2)$.
        \end{lemma}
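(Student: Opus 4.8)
The plan is to build $\hat{F}'_\pat$ by leaving the pattern part of $j$ untouched over the common piece $Y$, and recapping the companion part inside $X_{T_{\tt std}}$, exploiting that the latter complement has very simple topology.

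First I would use that $Y$ occurs verbatim in both decompositions $X_K=Y\cup X_{K_\cpn}$ and $X_{\hat{K}_\pat}=Y\cup X_{T_{\tt std}}$, glued along $\partial_\cpn Y$. Thus the restriction $j|_{F_\pat}\colon F_\pat=j^{-1}(Y)\to Y$ can simply be reread as a map into $X_{\hat{K}_\pat}$. Since $\partial_\sat Y=\partial X_K$ is a boundary component of $Y$, all of $\partial F$ already lies in $F_\pat$ and represents $m\gamma$; hence any modification of $j$ supported on the $\partial_\cpn Y$--boundary of $F_\pat$ leaves a surface whose only boundary on $\partial X_{\hat{K}_\pat}$ still represents $m\gamma$. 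It then remains to cap the curves of $j^{-1}(\partial_\cpn Y)$ by a surface in $X_{T_{\tt std}}$ of complexity zero, after which $x(\hat{F}'_\pat)\le x(F'_\pat)$ follows since gluing along circles preserves Euler characteristic and can only produce disk or annulus pieces.

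The key geometric input is that $X_{T_{\tt std}}\simeq S^1\vee S^2\vee S^2$ has $\pi_1\cong\ZZ$ generated by the meridian, and that, for the product framings fixed in the satellite construction, the two boundary homomorphisms $H_1(\partial_\cpn Y)\to H_1(X_{K_\cpn})$ and $H_1(\partial_\cpn Y)\to H_1(X_{T_{\tt std}})$ coincide: both annihilate the two longitudes and send the meridian to a generator. Consequently any curve whose $j$--image is null-homologous in $X_{K_\cpn}$ — in particular every inessential curve bounding a disk component of $Q$ — has vanishing meridional coefficient, so is null-homologous and hence null-homotopic in $X_{T_{\tt std}}$, and thus bounds an immersed disk there. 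This recaps $Q$ and every zero-coefficient curve at no cost.

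The hard part is the essential interface curves $b_1,\dots,b_t=\partial F'_\cpn$ carrying nonzero meridional coefficient, since such a curve bounds no surface whatsoever in $X_{T_{\tt std}}$ and so cannot be capped one at a time. This is exactly where Lemma~\ref{covering} is decisive: after passing to the cyclic cover with $d$ the least common multiple of the relevant classes, every component of the preimage of such a curve represents $\pm d$ in $H_1(X_K)\cong\ZZ$, so all these curves acquire one and the same absolute meridional coefficient $R$; and since $b_1,\dots,b_t$ bound $F'_\cpn$ in $X_{K_\cpn}$ their total class vanishes, forcing equally many curves of coefficient $+R$ and $-R$. As each is freely homotopic in $X_{T_{\tt std}}$ to $\mu^{\pm R}$, I would pair a $+R$ curve with a $-R$ curve by an annulus, capping the entire collection by a disjoint union of annuli of complexity zero. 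Together with the disks above this produces $\hat{j}\colon\hat{F}'_\pat\looparrowright X_{\hat{K}_\pat}$ with $x(\hat{F}'_\pat)\le x(F'_\pat)$ and $\hat{j}_*[\partial\hat{F}'_\pat]=m\gamma$. The one delicate point is precisely this pairing: without the uniformization of the meridional coefficients supplied by Lemma~\ref{covering} one would be driven to higher-genus caps and the complexity estimate would break down.
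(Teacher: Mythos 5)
Your proposal is correct and follows essentially the same route as the paper's proof: keep $j$ on the pattern piece, cap the zero-class interface curves (including $\partial Q$) with disks, and pair the $\pm\omega$-class curves (made uniform by Lemma~\ref{covering}) with annuli, using $\pi_1(X_{T_{\tt std}})\cong H_1(X_{T_{\tt std}})\cong\ZZ$ to define the map on the caps. Your explicit observation that the two boundary homomorphisms $H_1(\partial_\cpn Y)\to H_1(X_{K_\cpn})$ and $H_1(\partial_\cpn Y)\to H_1(X_{T_{\tt std}})$ agree is just a spelled-out version of what the paper calls ``by the desatellite construction.''
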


        \begin{proof}
            As $F$ has been assumed to satisfy the conclusion of Lemma~\ref{covering}, there is an $\omega\in H_1(X_K)$,
            such that every component of
            $\partial_\cpn F'_\pat$ (i.e. $F'_\pat\cap j^{-1}(\partial_\cpn Y)$) represents either $\pm\omega$ or $0$, and the algebraic sum over all
            the components is zero since they bound $j(F'_\cpn)\subset X_K$. Thus we may assume there are $s$ components representing $0$, $t$
            components representing $\omega$, and $t$ components representing $-\omega$, where $s,t\geq 0$. We construct $\hat{F}'_\pat$ by attaching
            $s$ disks and $t$ annuli to $\partial_\cpn F'_\pat$, such that each disk is attached to a component representing $0$, and each annulus is
            attached to a pair of components representing opposite $\pm\omega$--classes. Let $\mathcal{D}$ be the union of attached disks, and
            $\mathcal{A}$ be the union of attached annuli. The result is a compact orientable surface
            $\hat{F}'_\pat=F'_\pat\cup\mathcal{D}\cup\mathcal{A}$, such that $\partial \hat{F}'_\pat\cong \partial F$. It is clear that
            $x(\hat{F}'_\pat)\leq x(F'_\pat\cup\mathcal{A})=x(F'_\pat)$, (cf. Lemma~\ref{cutCxty}).

            To construct $\hat{j}$, we extend the map $j|:F_\pat\to Y\subset X_{\hat{K}_\pat}=Y\cup X_{T_{\tt std}}$ over $\hat{F}_\pat=F_\pat\cup
            Q\cup \mathcal{D}\cup \mathcal{A}$, using the fact that $\pi_1(X_{T_{\tt std}})\cong H_1(X_{T_{\tt std}})\cong \ZZ$.
            Specifically, to extend the map over $Q$, let $s$ be a component of $\partial_\cpn F_\pat$ bounding a disk component of $Q$. Then
            $j_*[s]=0$ in $H_1(X_K)$. Hence it lies in the subgroup
            $H_1(T^2\times \pt)$ of $H_1(\partial\thicktorus)\cong H_1(\partial_\cpn Y)$, and by the desatellite construction,
            $\hat{j}(s)$ should also be
            null-homologous in $X_{T_{\tt std}}$. We can extend $\hat{j}$ over the disk $D\subset Q$ bounded by $s$. After extending for every component
            of $Q$, we obtain $\hat{j}|:F_\pat\cup Q\to X_{\hat{K}_\pat}$. Similarly, we may extend $\hat{j}|$ over $\mathcal{D}$.
            To extend over $\mathcal{A}$, let $A\subset\mathcal{A}$ be an attached annulus component as in the construction. Let $\partial A=s_+\sqcup
            s_-$, such that $j_*[s_{\pm}]=\pm\omega$ in $H_1(X_K)$, respectively. By the desatellite construction, $\hat{j}_*[s_{\pm}]=\pm\omega$ in
            $H_1(X_{T_{\tt std}})$. Since $\pi_1(X_{T_{\tt std}})\cong H_1(X_{T_{\tt std}})$, $\hat{j}(s_+)$ is free-homotopic to the
            orientation-reversal of $\hat{j}(s_-)$. In other words, we can extend $\hat{j}|$ over $A$. After extending for every attached annulus, we
            obtain $\hat{j}:\hat{F}'_\pat\to X_{\hat{K}_\pat}$.

            Since $\hat{j}|_{\partial\hat{F}'_\pat}$ is the same as $j|_{\partial F}$ under the natural identification, $\hat{j}_*[\partial
            \hat{F}'_\pat]=m\gamma$ in $H_1(T^2)$, (where $H_1(T^2)$ may be regarded as either $H_1(K)$ or $H_1(\hat{K}_\pat)$ under the natural
            identification). After homotoping $\hat{j}:\hat{F}'_\pat\to X_{\hat{K}_\pat}$ to a smooth immersion, we obtain the map as desired.
        \end{proof}

        The contribution of the companion term in Theorem~\ref{Schubert} basically comes from $F'_\cpn$. However, $j_*[F'_\cpn]$ does not necessarily
        represent $m\gamma_\cpn$, but may differ by a term of zero $\norm{\cdot}_{K_\cpn}$--seminorm.

        To be precise, note the image of any component of $\partial Q\subset \partial_\cpn Y$ under $j$ lies in the kernel of $H_1(\partial_\cpn Y)\to
        H_1(X_{K_\cpn})$, which we may identify with $H_1(K_\cpn)$. Thus $\alpha=j_*[\partial Q]\in H_1(\partial_\cpn Y)$ in fact
        lies in $H_1(K_\cpn)$. It is also clear that $j_*[\partial F_\cpn]=m\,\gamma_\cpn\in H_1(K_\cpn)<H_1(\partial_\cpn Y)$.
        Thus $\beta=m\,\gamma_\cpn-\alpha$ in $H_1(K_\cpn)<H_1(\partial_\cpn Y)$ is represented by $j_*[F'_\cpn]$. We have:
        	$$m\gamma_\cpn=\alpha+\beta.$$

        \begin{lemma}\label{normAlpha}
            With the notation above, $\norm{\alpha}_{K_\cpn}=0$, and hence $m\norm{\gamma_\cpn}_{K_\cpn}=\norm{\beta}_{K_\cpn}$.
        \end{lemma}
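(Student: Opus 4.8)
The plan is to produce, for the class $\alpha$, an immersed bounding surface in $X_{K_\cpn}$ of complexity zero; this forces $\norm{\alpha}_{K_\cpn}=0$, and the stated equality then drops out of the triangle inequality applied to $m\gamma_\cpn=\alpha+\beta$.

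First I would observe that the restriction $j|_Q\colon Q\to X_{K_\cpn}$ already supplies the required surface. By construction $Q$ is the union of the disk components of $F_\cpn=j^{-1}(X_{K_\cpn})$, so the boundary of each component of $Q$ lies on $j^{-1}(\partial_\cpn Y)$ and hence maps into $\partial_\cpn Y=\partial X_{K_\cpn}$; in particular no part of $\partial Q$ meets the satellite boundary $\partial_\sat Y$, since $\partial F_\cpn=j^{-1}(\partial_\cpn Y)$. Thus, after the usual smoothing, $j|_Q$ is a properly immersed compact orientable surface of pairs in $(X_{K_\cpn},\partial X_{K_\cpn})$ whose boundary represents $\alpha=j_*[\partial Q]$, and by the observation recorded just before the lemma this class lies in the kernel of $H_1(\partial_\cpn Y)\to H_1(X_{K_\cpn})$, identified with $H_1(K_\cpn)$, so the seminorm $\norm{\cdot}_{K_\cpn}$ applies to it. Since every component of $Q$ is a disk, $\chi_{-}$ of each component vanishes, whence $x(Q)=0$. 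By Definition~\ref{norm} this yields $x(\alpha)\le x(Q)=0$, and therefore $\norm{\alpha}_{K_\cpn}=\inf_{m} x(m\alpha)/m\le x(\alpha)=0$, i.e. $\norm{\alpha}_{K_\cpn}=0$.

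Granting $\norm{\alpha}_{K_\cpn}=0$, the equality $m\norm{\gamma_\cpn}_{K_\cpn}=\norm{\beta}_{K_\cpn}$ follows from two applications of Lemma~\ref{property-norm} via the decomposition $m\gamma_\cpn=\alpha+\beta$. On one hand $\norm{m\gamma_\cpn}_{K_\cpn}\le\norm{\alpha}_{K_\cpn}+\norm{\beta}_{K_\cpn}=\norm{\beta}_{K_\cpn}$; on the other hand, writing $\beta=m\gamma_\cpn+(-\alpha)$ and using the homogeneity $\norm{-\alpha}_{K_\cpn}=\norm{\alpha}_{K_\cpn}=0$ of the seminorm, we get $\norm{\beta}_{K_\cpn}\le\norm{m\gamma_\cpn}_{K_\cpn}$. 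Combining these with $\norm{m\gamma_\cpn}_{K_\cpn}=m\norm{\gamma_\cpn}_{K_\cpn}$ gives the claim.

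I do not anticipate a genuine obstacle in this lemma; the only point demanding care is verifying that $j|_Q$ is a legitimate surface of pairs for the seminorm $\norm{\cdot}_{K_\cpn}$, namely that each disk of $Q$ is properly mapped and that $\alpha$ lands in the subgroup $H_1(K_\cpn)$ on which $\norm{\cdot}_{K_\cpn}$ is defined. Both are immediate from the normal-form setup, the former because $\partial F_\cpn=j^{-1}(\partial_\cpn Y)$ keeps all of $\partial Q$ on $\partial X_{K_\cpn}$, and the latter from the computation preceding the lemma statement.
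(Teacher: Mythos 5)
Your proof is correct and takes essentially the same approach as the paper's: where the paper notes that each component of $\partial Q$ bounds a disk component of $Q$, so that its image is null-homotopic in $X_{K_\cpn}$ and has vanishing seminorm, you simply use $Q$ itself as a zero-complexity immersed bounding surface for $\alpha$ --- the same underlying observation packaged as one surface instead of component by component. The derivation of $m\norm{\gamma_\cpn}_{K_\cpn}=\norm{\beta}_{K_\cpn}$ from the seminorm axioms is likewise exactly what the paper's ``hence'' asserts, just written out.
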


        \begin{proof}
            For any component $s\subset \partial Q$, $s$ bounds an embedded disk component $D$ of  $Q\subset F_\cpn$ by the definition of $Q$. It
            follows that $j(s)$ is null-homotopic in $X_{K_\cpn}$, and
            hence $\norm{j_*[s]}_{K_\cpn}=0$. As this works for any component of $\partial Q$, we see $\norm{\alpha}_{K_\cpn}
            =\norm{j_*[\partial Q]}_{K_\cpn}=0$. The `hence' part follows from that $\norm{\cdot}_{K_\cpn}$ is a seminorm on $H_1(K_\cpn;\RR)$.
        \end{proof}

        We are now ready to prove Theorem~\ref{Schubert}.

        \begin{proof}[{Proof of Theorem~\ref{Schubert}}]
            The first inequality follows from Lemma~\ref{ineq-desat}. In the rest, we assume $w(K_\pat)\neq 0$. Let
            $j:F\looparrowright X_K$ be a surface that $\epsilon$--approximates $\norm{\gamma}_K$ as before. We may
            assume $j$ satisfies the conclusion of Lemma~\ref{normalF} possibly after a modification. Possibly after passing
            to a finite cyclic covering of $F$, we may further
            assume $j$ satisfies the conclusion of Lemma~\ref{covering} as we have explained.
            We have the decomposition $F=F'_\pat\cup F'_\cpn$ of $F$ into essential subsurfaces, so by Lemma~\ref{cutCxty},
                $$x(F)\,=\,x(F'_\pat)+x(F'_\cpn).$$
            By Lemma~\ref{hatJ}, there is an immersed surface $\hat{j}:\hat{F}'_\pat\looparrowright X_{\hat{K}_\pat}$ representing $m\gamma$ in
            $H_1(\hat{K}_\pat)$, with $x(\hat{F}'_\pat)\leq x(F'_\pat)$, so:
                $$x(F'_\pat)\,\geq\,x(\hat{F}'_\pat)\geq m\norm{\gamma}_{\hat{K}_\pat}.$$
            By Lemma~\ref{normAlpha}, since $j|:F'_\cpn\looparrowright X_\cpn$ is an immersed surface representing $\beta$ in $H_1(K_\cpn)$,
                $$x(F'_\cpn)\,\geq\,\norm{\beta}_{K_\cpn}\,=\,m\norm{\gamma_\cpn}_{K_\cpn}.$$
            Combining the estimates above,
                $$x(F)\,\geq\, m\left(\norm{\gamma}_{\hat{K}_\pat}+\norm{\gamma_\cpn}_{K_\cpn}\right),$$
            thus:
                $$\norm{\gamma}_{\hat{K}_\pat}+\norm{\gamma_\cpn}_{K_\cpn}\,\leq\,\frac{x(F)}{m}\,<\, \norm{\gamma}_K+\epsilon.$$
            We conclude:
                $$\norm{\gamma}_{\hat{K}_\pat}+\norm{\gamma_\cpn}_{K_\cpn}\,\leq\, \norm{\gamma}_K,$$
            as $\epsilon>0$ is arbitrary.
        \end{proof}

\section{Braid satellites}\label{Sec-braidSatellites}
    In this section, we introduce and study braid satellites.

    \subsection{Braid patterns}\label{Subsec-braidPatterns}
        We shall fix a product structure on $T^2\cong S^1\times S^1$ throughout this section.
        By a \emph{braid} we shall mean an embedding $b:S^1\hookrightarrow S^1\times D^2$,
        whose image is a simple closed loop transverse to the fiber disks.
        We usually write $k_b$ for the classical knot in $S^3$ 
        associated to $b$, namely, the `satellite' knot with
        the trivial companion and the pattern $b$.
            
        There is a family of patterns arising from braids:

        \begin{definition}\label{bsat}
            Let $b:S^1\hookrightarrow S^1\times D^2$ be a braid.
            Define the \emph{standard braid pattern} $P_b$ associated to $b$ as:
                $$P_b=\id_{S^1}\times b: S^1\times S^1 \hookrightarrow \thicktorus,$$
            where $\thicktorus=S^1\times S^1\times D^2$ is the thickened torus. The \emph{standard braid torus}
            $K_b$ associated to $b$ is defined as the desatellite $T_{\tt std}\cdot P_b$.
        \end{definition}

        \begin{remark}\label{spunKnot}
            The standard braid torus $K_b$ is sometimes called
            the \emph{spun $T^2$--knot} obtained from the associated knot $k_b$.
            In \cite{Hi-T2}, the extendable subgroup $\esg_{K_b}$ has been explicitly computed.
        \end{remark}

        \begin{lemma}\label{bwn}
            If $b:S^1\hookrightarrow S^1\times D^2$ is a braid with winding number $w(b)$,
            then $w(P_b)=w(b)$. In particular, $w(P_b)\neq 0$.
        \end{lemma}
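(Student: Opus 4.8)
The plan is to reduce everything to a degree computation under the projection collapsing the $D^2$--factor. First I would fix coordinates by writing $b(t)=(b_1(t),b_2(t))\in S^1\times D^2$, where $b_1=\mathrm{pr}_{S^1}\circ b\colon S^1\to S^1$, so that the standard braid pattern reads
$$P_b(s,t)=(s,b_1(t),b_2(t))\in S^1\times S^1\times D^2=\thicktorus.$$
By definition the braid winding number is $w(b)=\deg(b_1)$, the algebraic intersection number of $b(S^1)$ with a fiber disk $\pt\times D^2$ in $S^1\times D^2$.

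Next I would compute $w(P_b)$ homologically. Since $D^2$ is contractible, the projection $\pi\colon\thicktorus\to S^1\times S^1=T^2$ forgetting the $D^2$--factor is a homotopy equivalence, so $H_2(\thicktorus)\cong\ZZ$ is generated by $[T^2\times\pt]$, while by Poincar\'e--Lefschetz duality $H_2(\thicktorus,\partial\thicktorus)\cong\ZZ$ is generated by the fiber disk $[\pt\times\pt\times D^2]$, and the two generators meet transversely in a single point. Now $\pi\circ P_b\colon T^2\to T^2$ is the product map $(s,t)\mapsto(s,b_1(t))$, of degree $\deg(\id_{S^1})\cdot\deg(b_1)=w(b)$; hence via the isomorphism $\pi_*$ one gets $[P_b]=w(b)\,[T^2\times\pt]$ in $H_2(\thicktorus)$. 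Intersecting with the fiber disk then yields
$$w(P_b)=[P_b]\cdot[\pt\times\pt\times D^2]=w(b),$$
since the generators pair to $1$. Equivalently one can argue geometrically: a generic fiber disk $\pt\times\{q\}\times D^2$ meets $P_b(T^2)$ exactly in the points $(p,t)$ with $b_1(t)=q$, and the local intersection signs coincide with the signs of the crossings of $b$ through $\{q\}\times D^2$, so the signed count is again $\deg(b_1)=w(b)$.

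Finally, for the ``in particular'' clause I would invoke the braid condition directly: transversality of $b$ to every fiber disk forces $b_1'(t)\neq0$ for all $t$, so $b_1\colon S^1\to S^1$ is an immersion and hence a covering map of some positive number of sheets; therefore $|w(b)|=|\deg(b_1)|\geq1$ and in particular $w(P_b)=w(b)\neq0$. The only delicate point is orientation bookkeeping---verifying that the pairing of the two generators is $+1$ and that $\deg(\pi\circ P_b)$ comes out as $+w(b)$ rather than $-w(b)$---but this is routine once a consistent orientation of $\thicktorus$ and of the fiber disk is fixed, and the sign is in any case irrelevant for the nonvanishing conclusion.
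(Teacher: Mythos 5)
Your proof is correct and is essentially the argument the paper has in mind: the paper's entire proof is the assertion that the lemma ``follows immediately from the construction and the definition of winding numbers,'' and your computation---identifying $[P_b]=w(b)\,[T^2\times\pt]$ in $H_2(\thicktorus)$ via the degree of $\pi\circ P_b$ and pairing with the fiber disk, plus the covering-map observation that gives $w(b)\neq 0$---is exactly the routine verification being alluded to. Nothing further is needed.
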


        \begin{proof}
            This follows immediately from the construction and the definition of winding numbers.
        \end{proof}

        \begin{proposition}\label{btNorm}
            Suppose $b$ is a braid whose associated knot $k_b$ is nontrivial. Then:
                $$\norm{\pt\times S^1}_{K_b}\,=\,2\,g(k_b)-1\textrm{, and } \norm{S^1\times \pt}_{K_b}\,=\,0,$$
            where $g(k_b)$ denotes the genus of $k_b$.
        \end{proposition}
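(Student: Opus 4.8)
The plan is to compute the two seminorms separately, using the satellite decomposition $K_b = T_{\tt std}\cdot P_b$ together with the identification of the seminorm with stable commutator length (Remark~\ref{scl}), or more directly with the complexity $x(\cdot)$ of immersed surfaces. The two slopes play asymmetric roles because of the product structure $P_b = \id_{S^1}\times b$: the $S^1\times\pt$ direction is the ``braid-axis'' direction along which nothing is knotted, while the $\pt\times S^1$ direction is the ``braid'' direction carrying all the knotting of $k_b$.

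\medskip

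First I would treat the easy half, $\norm{S^1\times\pt}_{K_b}=0$. The slope $S^1\times\pt$ is carried by the first $S^1$ factor, along which $P_b$ is the identity. Concretely, the torus $K_b$ is a spun torus, and the loop $S^1\times\pt$ bounds an immersed (indeed, I expect an embedded) annulus or low-complexity surface in $X_{K_b}$ swept out by the spinning parameter: as one moves around the axial $S^1$, the curve $\pt\times b$ traces a surface whose boundary in the $S^1$-direction closes up with zero complexity. The cleanest way to phrase this is that $[S^1\times\pt]$ is represented by a map of a torus or annulus into $X_{K_b}$ with $\chi_{-}=0$, forcing $x(m\,[S^1\times\pt])=0$ for all $m$, hence $\norm{S^1\times\pt}_{K_b}=0$ by Definition~\ref{norm}. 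I would verify this by exhibiting the explicit surface coming from the $S^1$-symmetry of the spinning construction.

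\medskip

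The substantive half is $\norm{\pt\times S^1}_{K_b}=2g(k_b)-1$. Here I would exploit the relationship between $X_{K_b}$ and the classical knot exterior $X_{k_b}=S^3\setminus k_b$. Since $K_b$ is the spin of $k_b$, its exterior fibers (up to the appropriate decomposition) over $X_{k_b}$ in a way that carries a classical Seifert surface for $k_b$ to a surface in $X_{K_b}$ bounding $\pt\times S^1$. For the upper bound, I would take a minimal-genus Seifert surface $\Sigma$ for $k_b$, of complexity $2g(k_b)-1$, and spin or cap it to produce an immersed surface in $X_{K_b}$ with boundary representing $\pt\times S^1$ and the same complexity, giving $\norm{\pt\times S^1}_{K_b}\le 2g(k_b)-1$. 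For the lower bound, I would use the fact (cited in Subsection~\ref{Subsec-marking}) that the singular and embedded Thurston norms agree for classical knots by Gabai's theorem, so the genus $g(k_b)$ is detected even by immersed/singular surfaces; projecting an efficient immersed surface in $X_{K_b}$ back down to $X_{k_b}$ (via the map collapsing the spinning direction, analogous to the map $f$ built in Lemma~\ref{ineq-desat}) produces a singular Seifert surface for $k_b$ of no greater complexity, yielding $\norm{\pt\times S^1}_{K_b}\ge 2g(k_b)-1$. The nontriviality of $k_b$ guarantees $2g(k_b)-1>0$.

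\medskip

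The main obstacle will be the lower bound, specifically controlling the complexity under the projection back to the classical knot exterior. The difficulty is that an efficient immersed surface $F$ for $\pt\times S^1$ in $X_{K_b}$ need not respect the spinning structure, so one must argue that collapsing the axial $S^1$-direction does not increase complexity and does produce a valid (singular) Seifert surface whose boundary is the correct multiple of the longitude of $k_b$. I expect this to hinge on a degree/homology bookkeeping argument (tracking how $\partial F$ maps under the collapse, as in the winding-number analysis of Lemmas~\ref{cpnMrdn} and~\ref{patBdry}) combined with the equality of embedded and singular genus for classical knots to close the gap and identify the common value as exactly $2g(k_b)-1$.
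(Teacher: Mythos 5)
Your proposal follows essentially the same route as the paper's proof: the lower bound for $\norm{\pt\times S^1}_{K_b}$ via a map $X_{K_b}\to S^3-k_b$ (projection on the product piece $Y\cong S^1\times N$, extended over the remaining piece since $M_{k_b}-N$ is a $K(\ZZ,1)$) combined with Gabai's equality of the singular and embedded Thurston norms, the upper bound by inserting a minimal-genus Seifert surface through the factorization of $K_b$ through $S^1\times D^3$, and the vanishing of $\norm{S^1\times\pt}_{K_b}$ from the spinning structure. The one imprecision is your suggestion that a closed torus could serve as the zero-complexity representative (it has no boundary, so it cannot represent the class); the paper instead homotopes the slope through an annulus to $S^1\times\{\pt'\}$ with $\pt'\in\partial D^3$ and caps with a disk in $S^4-(S^1\times D^3)$, yielding an immersed disk of complexity zero.
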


        \begin{proof}
            For simplicity, we write $K_b$, $k_b$ as $K$, $k$ respectively.

            To see $\norm{\pt\times S^1}_{K}\geq 2g(k)-1$, the idea is to construct
            a map between the complements $f:X_K\to M_k$, where $X_K=S^4-K$, and $M_k=S^3-k$.
            Let $Y\subset X_K$ be the image of the complement $\thicktorus-P_b$, and $N\subset M_k$ be
            the image of the complement of $S^1\times D^2-b$. There is a natural projection
            map $f|:Y\cong S^1\times N\to N$. As $M_k-N$ is homeomorphic to the solid torus, which is an
            Eilenberg-MacLane space $K(\ZZ,1)$, it is not hard to see that $f|$ extends as a map $f:X_K\to M_k$.

            Provided this, for any properly immersed compact orientable surface $j:F\looparrowright
            X_K$ whose boundary represents $m[c]$, the norm of $[f\circ j(F)]$ is bounded below by
            the singular Thurston norm of $k$. As the singular Thurston norm equals the Thurston
            norm (cf. \cite{Ga}), which further equals $2g(k)-1$ for nontrivial knots, we
            obtain $\norm{\pt\times S^1}_K\geq 2g(k)-1$.

            To see $\norm{\pt\times S^1}_K=2g(k)-1$, it suffices to find a surface realizing the norm.
            In fact, one may first take an inclusion $\iota:\thicktorus\to S^1\times D^3$, where $\iota=\id_{S^1}\times\iota'$
            where $\iota':S^1\times D^2\to D^3$ is an standard unknotted embedding, i.e. whose core is unknotted
            in $D^3$ and $S^1\times \pt\subset S^1\times\partial D^2$ is the longitude. Then $K_b$ factorizes through
            a smooth embedding $S^1\times D^3\hookrightarrow S^4$ (unique up to isotopy) via $\iota\circ P_b$. This
            allows us to put a minimal genus Seifert surface of $k$ into $X_K$ so that it is bounded by the slope $\pt\times S^1$.
            Thus $\norm{\pt\times S^1}_K=2g(k)-1$.

            From the factorization above, we may also free-homotope $(\iota\circ P_b)(S^1\times \pt)$ to
            $S^1\times\{\pt'\}$ where $\pt'$ is a point on $\partial D^3$, via an annulus $S^1\times [\pt,\pt']$ where $[\pt,\pt']$
            is an arc whose interior lies in $D^3-k$. As $S^1\times\{\pt'\}$ bounds a disk outside the image of $S^1\times D^3$ in
            $S^4$, we see $\norm{S^1\times \pt}_K=0$.
        \end{proof}

    \subsection{Braid satellites}\label{Subsec-seminormBraidSat}
        As an application of the Schubert inequality for seminorms, we estimate $\norm{\cdot}_K$ for
        braid satellites of braid tori. We need the following notation.

        \begin{definition}\label{def-twist}
            Let $K:T^2\hookrightarrow S^4$ be a knotted torus in $S^4$, and $\tau:T^2\to T^2$
            be an automorphism of $T^2$. We define the \emph{$\tau$--twist} $K^\tau$ of $K$ to be the knotted torus:
                $$K\circ\tau:T^2\hookrightarrow S^4.$$
        \end{definition}

        It follows immediately that the seminorm changes under a twist according to the formula:
            $$\norm{\gamma}_{K^{\tau}}=\norm{\tau(\gamma)}_K.$$

        Fix a product structure $T^2\cong S^1\times S^1$ as before.
        We denote the basis vectors $[S^1\times \pt]$ and $[\pt\times S^1]$
        on $H_1(T^2;\RR)$ as $\xi$, $\eta$, respectively.
        A \emph{braid satellite} is known as some knotted torus of the form $K^\tau_b\cdot P_{b'}$, where
        $b,b'$ are braids with nontrivial associated knots, and $\tau\in\mcg(T^2)$. It is said to be a
        \emph{plumbing} braid satellite if $\tau(\xi)=\eta$ and $\tau(\eta)=-\xi$.

        \begin{proposition}\label{braid-braid}
            Suppose $b,b'$ are braids with nontrivial associated knots, and
            $\tau$ is an automorphism of $T^2$.
            Let $K$ be the satellite knotted torus $K^\tau_b\cdot P_{b'}$. Then for any $\gamma=x\,\xi+y\,\eta$
            in $H_1(T^2;\RR)$,
                $$\norm{\gamma}_K\,\geq\,(2g'-1)\cdot|y|+(2g-1)\cdot|rx+sw'y|.$$
            Here $g,g'>0$ are the genera of the associated knots of $b$, $b'$, respectively, and $w'$ is the winding number of
            $b'$, and $r$, $s$ are the intersection numbers $\xi\cdot\tau(\xi)$, $\xi\cdot\tau(\eta)$, respectively.
            Moreover, the equality is achieved if $K^\tau_b\cdot P_{b'}$ is a plumbing braid satellite.
        \end{proposition}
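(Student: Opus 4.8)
The plan is to prove the inequality by applying the Schubert-type inequality (Theorem~\ref{Schubert}) to the satellite $K=K^\tau_b\cdot P_{b'}$, where the companion is $K_\cpn=K^\tau_b$ and the pattern is $K_\pat=P_{b'}$, then to analyze each of the two resulting terms. Since $P_{b'}$ is the standard braid pattern associated to $b'$, Lemma~\ref{bwn} tells us $w(P_{b'})=w(b')=w'$, which is nonzero because the associated knot of $b'$ is nontrivial; hence the stronger form of the Schubert inequality applies, giving
\begin{equation*}
    \norm{\gamma}_K\,\geq\,\norm{\gamma}_{\hat{K}_{\pat}}+\norm{\gamma_\cpn}_{K_\cpn}.
\end{equation*}
Here the desatellite is $\hat{K}_{\pat}=T_{\tt std}\cdot P_{b'}=K_{b'}$, the standard braid torus associated to $b'$, and the companion is $K_\cpn=K^\tau_b$.

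First I would compute the desatellite term $\norm{\gamma}_{\hat{K}_{\pat}}=\norm{x\xi+y\eta}_{K_{b'}}$. By Proposition~\ref{btNorm} applied to $b'$, we have $\norm{\eta}_{K_{b'}}=\norm{\pt\times S^1}_{K_{b'}}=2g'-1$ and $\norm{\xi}_{K_{b'}}=\norm{S^1\times\pt}_{K_{b'}}=0$. Since the seminorm vanishes on the $\xi$--direction, the quotient seminorm descends to the one-dimensional space spanned by $\eta$, so $\norm{x\xi+y\eta}_{K_{b'}}=|y|\cdot(2g'-1)$, which is the first summand in the claimed bound. Next I would compute the companion term. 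The push-forward $\gamma_\cpn\in H_1(T^2)$ is obtained by pushing $\gamma$ through $P_{b'}$ and projecting to the $T^2$ factor; since $P_{b'}=\id_{S^1}\times b'$, the projection sends $\eta=[\pt\times S^1]$ to $w'$ times the core direction and fixes $\xi$, so $\gamma_\cpn=x\xi+w'y\eta$ under the natural identification. Then, using the twist formula $\norm{\gamma_\cpn}_{K^\tau_b}=\norm{\tau(\gamma_\cpn)}_{K_b}$ together with Proposition~\ref{btNorm} for $b$ (which gives $\norm{\eta}_{K_b}=2g-1$, $\norm{\xi}_{K_b}=0$), I would evaluate $\norm{\tau(x\xi+w'y\eta)}_{K_b}$. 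Writing $\tau$ in the basis $\{\xi,\eta\}$, the $\eta$--coefficient of $\tau(x\xi+w'y\eta)$ is exactly the intersection number $\xi\cdot\tau(x\xi+w'y\eta)=rx+sw'y$, so the companion term equals $(2g-1)\cdot|rx+sw'y|$, completing the lower bound.

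For the equality statement I would specialize to a plumbing braid satellite, where $\tau(\xi)=\eta$ and $\tau(\eta)=-\xi$, so that $r=\xi\cdot\tau(\xi)=\xi\cdot\eta=1$ and $s=\xi\cdot\tau(\eta)=\xi\cdot(-\xi)=0$. In this case the two directions in which the seminorm is supported become orthogonal under the splitting, and the two Seifert-type surfaces realizing the two terms (the minimal-genus Seifert surface for $b'$ bounding $\eta$ in the pattern region, and the image of a minimal-genus Seifert surface for $b$ in the companion region) can be arranged to intersect $\partial_\cpn Y$ cleanly and glued without excess complexity, so that the complexity is additive and the lower bound is attained. The main obstacle I anticipate is the equality half: the Schubert inequality only produces a lower bound, so realizing it requires exhibiting an explicit surface of matching complexity and verifying that the plumbing configuration $\tau(\xi)=\eta$, $\tau(\eta)=-\xi$ makes the companion and pattern contributions geometrically disjoint enough to add up exactly—essentially a dual upper-bound construction that mirrors the surface-gluing in the proof of Theorem~\ref{Schubert} and in Proposition~\ref{btNorm}.
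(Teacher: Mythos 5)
Your lower-bound argument is correct and coincides with the paper's own proof (Lemma~\ref{braid-braid-inequality}): apply Theorem~\ref{Schubert} with companion $K^\tau_b$ and pattern $P_{b'}$, compute $\gamma_\cpn=x\,\xi+w'y\,\eta$, use the twist formula $\norm{\gamma_\cpn}_{K^\tau_b}=\norm{\tau(\gamma_\cpn)}_{K_b}$, and evaluate both terms by Proposition~\ref{btNorm} together with the observation that a seminorm vanishing on $\xi$ depends only on the $\eta$--coefficient. (One small misattribution: $w(P_{b'})\neq 0$ holds because $b'$ is a braid, transverse to the fiber disks, as recorded in Lemma~\ref{bwn} --- not because $k_{b'}$ is nontrivial.)

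The equality half, however, contains a genuine gap, and the obstacle you flag is not the real one. Since $\norm{\cdot}_K$ is a seminorm (Lemma~\ref{property-norm}) and seminorm representatives may be disconnected, nothing ever needs to be glued or made disjoint: it suffices to prove the two separate bounds $\norm{\xi}_K\leq 2g-1$ and $\norm{\eta}_K\leq 2g'-1$. What your sketch never supplies is why such cheap surfaces exist inside $X_K$ (rather than inside $X_{K_{b'}}$ or $X_{K_b}$), and this is precisely where the plumbing hypothesis must enter. A minimal Seifert surface $S'$ for $k_{b'}$ does not lie in the pattern piece $Y=S^1\times R_{b'}$: it meets the solid torus complementary to $S^1\times D^2$ in meridian disks, so the piece $S'\cap R_{b'}$ has extra boundary circles on $\partial_\cpn Y$, parallel to $\eta_{\partial_\cpn Y}$. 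Under the plumbing gluing these circles become parallel copies of the slope $\tau(\eta)=-\xi$ of $K_b$, i.e.\ the spinning direction of the spun torus, which is null-homotopic in $X_{K_b}$ (proof of Proposition~\ref{btNorm}); only for this reason can they be capped off at no cost, giving $\norm{\eta}_K\leq 2g'-1$. Similarly, $\xi\subset\partial X_K$ is a fiber of $Y$, hence homotopic across a fiber annulus to the slope $\tau(\xi)=\eta$ of $K_b$, the longitude of $k_b$, which bounds a genus-$g$ Seifert surface in $X_{K_b}$; this gives $\norm{\xi}_K\leq 2g-1$. If instead $\tau=\id$, the extra circles map to longitude slopes of $K_b$ and the whole argument (and the equality) fails, consistent with the remark following the proposition. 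With these points added, your geometric route does succeed --- the paper carries out essentially this construction in Lemma~\ref{niceSurfaces}, where it is used for the embedded-genus estimates --- but note that the paper proves the seminorm equality differently (Lemma~\ref{plumbingSeminorm}): group-theoretically, by bounding the commutator lengths of $\xi$ and $\eta$ in $\pi_1(X_K)$, using that $\pi_1(Y)\to\pi_1(X_K)$ factors through $\ZZ\times\pi_1(M_{k_{b'}})$ and that $\xi$ is freely homotopic to the longitude of $k_b$ in $\pi_1(M_{k_b})$, and then converting to seminorm bounds via Remark~\ref{cl} and Lemma~\ref{finitenessSeminorm}.
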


        We remark that one should not expect the seminorm lower bound be realized in general. For instance, in the extremal
        case when $\tau$ is the identity, $\pi_1(K)$ is exactly the knot group of the satellite of classical knots
        $k_b\cdot b'$, and the lower bound for the longitude slope is given by the classical Schubert inequality,
        which is not realized in general. However, the plumbing case is a little special. It provides examples
        of slopes on which the seminorm is not realized by the singular genus. In fact, when $c\subset K$ is
        a slope representing $x\,\xi+y\,\eta\in H_1(T^2)$, where $x,y$ are coprime odd integers, the formula
        yields that $\norm{c}_K$ is an even number, so the integer $g^\star_K(c)$ can never be $\frac{\norm{c}_K+1}2$.
        We shall give some estimate of the singular genus and the genus
        for plumbing braid satellites in Subsection~\ref{Subsec-plumbingBraidSatellite}.

        The corollary below follows immediately from Proposition~\ref{braid-braid}
        and Lemma~\ref{finitenessSeminorm}:

        \begin{corollary}\label{normExample}
            With the notation of Proposition~\ref{braid-braid},
            if $\tau$ is an automorphism of $T^2$ not fixing $\xi$ up to sign,
            then the stable extendable subgroup
            $\esg^\stable_K$ of $\mcg(T^2)$ with respect to $K$, and hence the extendable subgroup
            $\esg_K$, is finite.
        \end{corollary}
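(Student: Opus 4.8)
The plan is to deduce everything from the lower bound of Proposition~\ref{braid-braid} together with the finiteness criterion of Lemma~\ref{finitenessSeminorm}. By that lemma, it suffices to show that under the hypothesis on $\tau$ the induced seminorm $\norm{\cdot}_K$ is nondegenerate, i.e.~a genuine norm on $H_1(T^2;\RR)$; the finiteness of $\esg_K$ then follows from that of $\esg^\stable_K$ because $\esg_K\leq\esg^\stable_K$ (taking $Y=S^4$ in the definition of $Y$--stabilization shows that every extendable automorphism is stably extendable).

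First I would translate the geometric hypothesis into an arithmetic one. Since $\tau\in\mcg(T^2)\cong\SL(2,\ZZ)$ preserves the algebraic intersection form on $H_1(T^2)$, and since $r=\xi\cdot\tau(\xi)$ by the notation of Proposition~\ref{braid-braid}, the number $r$ is exactly the $\eta$--coefficient of $\tau(\xi)$ in the basis $\seq{\xi,\eta}$. Hence $\tau(\xi)=\pm\xi$ if and only if that coefficient vanishes, that is, if and only if $r=0$. Therefore the assumption that $\tau$ does not fix $\xi$ up to sign is precisely the condition $r\neq 0$, which I will use in the final step.

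Next I would establish nondegeneracy directly from the inequality. Because the associated knots of $b,b'$ are nontrivial, their genera satisfy $g,g'>0$, so $2g-1\geq 1>0$ and $2g'-1\geq 1>0$; thus the right-hand side of Proposition~\ref{braid-braid} is a nonnegative combination of $|y|$ and $|rx+sw'y|$. Suppose $\gamma=x\,\xi+y\,\eta$ satisfies $\norm{\gamma}_K=0$. Then both nonnegative terms must vanish, giving $|y|=0$ and $|rx+sw'y|=0$; the first yields $y=0$, whence the second reduces to $rx=0$, and since $r\neq 0$ we conclude $x=0$. Hence $\gamma=0$, so $\norm{\cdot}_K$ is positive-definite and therefore nondegenerate. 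Applying Lemma~\ref{finitenessSeminorm} finishes the argument.

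There is essentially no serious obstacle here, since this is a direct consequence of the two cited results; the only point requiring care is the identification in the second step, namely verifying that ``$\tau$ does not fix $\xi$ up to sign'' is equivalent to $r\neq 0$, which is exactly what makes the companion term of the bound active and forces positivity along the $\xi$--direction.
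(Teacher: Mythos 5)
Your proposal is correct and is exactly the paper's intended argument: the paper derives this corollary immediately from Proposition~\ref{braid-braid} and Lemma~\ref{finitenessSeminorm}, and your write-up simply fills in the routine details (the identification of ``$\tau$ does not fix $\xi$ up to sign'' with $r\neq 0$, and the resulting positive-definiteness of $\norm{\cdot}_K$). Nothing differs in substance, and your verification that $r\neq0$ forces nondegeneracy via the lower bound is precisely the step the paper leaves implicit.
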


        In the rest of this subsection, we prove Proposition~\ref{braid-braid}

        \begin{lemma}\label{braid-braid-inequality}
            With the notation of Proposition~\ref{braid-braid},
                $$\norm{\gamma}_K\,\geq\,(2g'-1)\cdot|y|+(2g-1)\cdot|rx+sw'y|.$$
        \end{lemma}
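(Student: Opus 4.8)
The plan is to derive the inequality directly from the Schubert-type inequality (Theorem~\ref{Schubert}), thereby reducing everything to the two standard braid tori $K_b$ and $K_{b'}$, whose seminorms have already been pinned down in Proposition~\ref{btNorm}.

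First I would set up the satellite structure of $K=K_b^\tau\cdot P_{b'}$. Here the companion is $K_\cpn=K_b^\tau$ and the pattern is $K_\pat=P_{b'}$, whose desatellite is $\hat{K}_{P_{b'}}=T_{\mathtt{std}}\cdot P_{b'}=K_{b'}$ by Definitions~\ref{def-sat} and~\ref{bsat}. Since the associated knot of $b'$ is nontrivial, Lemma~\ref{bwn} gives $w(P_{b'})=w'\neq 0$, so the stronger form of Theorem~\ref{Schubert} applies and yields
$$\norm{\gamma}_K\,\geq\,\norm{\gamma}_{K_{b'}}+\norm{\gamma_\cpn}_{K_b^\tau}.$$
It then remains to identify each summand on the right with the corresponding term in the claimed bound.

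For the desatellite term, I would invoke Proposition~\ref{btNorm} for $K_{b'}$, giving $\norm{\xi}_{K_{b'}}=0$ and $\norm{\eta}_{K_{b'}}=2g'-1$. A two-sided application of the triangle inequality shows that adding a vector of zero seminorm does not change the seminorm, so the $\xi$-direction is null and $\norm{x\,\xi+y\,\eta}_{K_{b'}}=\norm{y\,\eta}_{K_{b'}}=(2g'-1)\cdot|y|$, which is the first summand. For the companion term I would first compute the push-forward $\gamma_\cpn$: the pattern $P_{b'}=\id_{S^1}\times b'$ projects to the self-map of $T^2$ that is the identity on the $S^1\times\pt$ factor and of degree $w'$ on the $\pt\times S^1$ factor, so that $\gamma_\cpn=x\,\xi+w'y\,\eta$. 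The twist formula $\norm{\cdot}_{K^\tau}=\norm{\tau(\cdot)}_K$ then gives $\norm{\gamma_\cpn}_{K_b^\tau}=\norm{\tau(\gamma_\cpn)}_{K_b}$, and applying Proposition~\ref{btNorm} to $K_b$ (again a null $\xi$-direction, with $\norm{\eta}_{K_b}=2g-1$) reduces this to $(2g-1)$ times the absolute value of the $\eta$-coordinate of $\tau(\gamma_\cpn)$. I would read off that coordinate by pairing with $\xi$ under the intersection form, using $\xi\cdot\xi=0$ and $\xi\cdot\eta=1$: the $\eta$-coordinate of $\tau(\gamma_\cpn)=x\,\tau(\xi)+w'y\,\tau(\eta)$ equals $\xi\cdot\tau(\gamma_\cpn)=x\,(\xi\cdot\tau(\xi))+w'y\,(\xi\cdot\tau(\eta))=rx+sw'y$. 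Hence $\norm{\gamma_\cpn}_{K_b^\tau}=(2g-1)\cdot|rx+sw'y|$, the second summand, and combining the two estimates gives the inequality.

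The argument is essentially bookkeeping, and the only delicate point is the companion term. The main obstacle there is getting the push-forward $\gamma_\cpn$ exactly right — in particular ensuring that the winding number $w'$ multiplies precisely the $\eta$-coordinate and nothing else — and then correctly commuting the twist $\tau$ past the seminorm and extracting the $\eta$-component via the intersection pairing, so that the combination $rx+sw'y$ emerges with the stated coefficients.
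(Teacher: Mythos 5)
Your proposal is correct and follows essentially the same route as the paper: apply Lemma~\ref{bwn} and Theorem~\ref{Schubert} to split off the desatellite term $\norm{\gamma}_{K_{b'}}=(2g'-1)|y|$, compute $\gamma_\cpn=x\,\xi+w'y\,\eta$, pass the twist through via $\norm{\gamma_\cpn}_{K_b^\tau}=\norm{\tau(\gamma_\cpn)}_{K_b}$, and evaluate with Proposition~\ref{btNorm}. The only cosmetic difference is that you extract the $\eta$-coefficient $rx+sw'y$ by pairing with $\xi$ under the intersection form, where the paper writes $\tau$ as a matrix $\left(\begin{smallmatrix}p&q\\r&s\end{smallmatrix}\right)$ and reads it off directly.
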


        \begin{proof}
            By Lemma~\ref{bwn} and Theorem~\ref{Schubert},
                $$\norm{\gamma}_K\,\geq\,\norm{\gamma}_{K_{b'}}+\norm{\tau(\gamma_\cpn)}_{K_b}.$$
            Note that we are writing $\gamma_\cpn$ 
            with respect to $K_b\cdot P_{b'}$, 
            so the second term equals
            the corresponding term in Theorem~\ref{Schubert}
            with respect to the twisted satellite $K^\tau_b\cdot P_{b'}$
            via an obvious transformation.
            By Proposition~\ref{btNorm},
                $$\norm{\gamma}_{K_{b'}}\,=\, (2g'-1)\cdot|y|.$$
            As $b'$ is a braid, $P_{b'}:T^2\to\thicktorus\simeq T^2$ implies $\gamma_\cpn=x\,\xi+w'y\,\eta$. Write
            $\tau$ as $\left(\begin{array}{cc}p&q\\r&s\end{array}\right)$ in $\SL(2,\ZZ)$
            under the given basis $\xi,\eta$. Note it agrees with the notation $r,s$ in the statement.
            Then it is easy to compute that:
                $$\tau(\gamma_\cpn)\,=\,(px+qw'y)\,\xi+(rx+sw'y)\,\eta.$$
            By Proposition~\ref{btNorm} again,
                $$\norm{\tau(\gamma_\cpn)}_{K_b}\,=\,(2g-1)\cdot|rx+sw'y|.$$
            Combining these calculations, we obtain the estimate as desired.
        \end{proof}

        \begin{lemma}\label{plumbingSeminorm}
            With the notation of Proposition~\ref{braid-braid}, if $K$ is a plumbing braid satellite,
                $$\norm{\gamma}_K\,\leq\,(2g'-1)\cdot|y|+(2g-1)\cdot|x|.$$
        \end{lemma}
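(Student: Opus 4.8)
The plan is to construct an explicit immersed surface realizing the claimed upper bound, matching the lower bound from Lemma~\ref{braid-braid-inequality} in the plumbing case. In the plumbing situation $\tau(\xi)=\eta$ and $\tau(\eta)=-\xi$, so with $r=\xi\cdot\tau(\xi)$ and $s=\xi\cdot\tau(\eta)$ one computes $r=1$ and $s=0$; hence the general lower bound $(2g'-1)|y|+(2g-1)|rx+sw'y|$ specializes to $(2g'-1)|y|+(2g-1)|x|$. So the target upper bound is exactly this specialization, and proving the lemma will establish equality in Proposition~\ref{braid-braid} for plumbing braid satellites. First I would fix $\gamma=x\,\xi+y\,\eta$ with $x,y$ integers (it suffices to treat integral classes and then extend by continuity and homogeneity, using Lemma~\ref{property-norm}), and aim to build a properly immersed surface $F\looparrowright X_K$ with $\partial F$ representing $\gamma$ and complexity at most $(2g'-1)|y|+(2g-1)|x|$.

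The key geometric idea is that a plumbing braid satellite admits two complementary Seifert-type surfaces, one coming from each braid factor, which can be made to meet transversely along the plumbing locus. Recall from the proof of Proposition~\ref{btNorm} that the standard braid torus $K_b$ factorizes through an embedding $S^1\times D^3\hookrightarrow S^4$, so that a minimal-genus Seifert surface of the associated knot $k_b$ sits in $X_{K_b}$ bounded by the slope $\pt\times S^1$, contributing complexity $2g-1$ per unit, while the slope $S^1\times\pt$ bounds with zero complexity. In the satellite $K=K^\tau_b\cdot P_{b'}$, the outer companion $K^\tau_b$ contributes, through the $\tau$-twist, a surface bounded by the $\eta$-direction after the coordinate swap, and the inner pattern $P_{b'}$ contributes a second surface in the tubular piece $Y$. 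I would take $|x|$ parallel copies of the companion minimal Seifert surface and $|y|$ parallel copies of the pattern-side minimal Seifert surface, bounded by the appropriate slopes so that the total boundary represents $x\,\xi+y\,\eta=\gamma$. The plumbing condition is exactly what allows these two families to be placed simultaneously with controlled (transverse, surgerable) intersection so that no extra genus is forced.

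The main obstacle will be controlling the complexity of the assembled surface so that it is additive rather than forcing an increase: I must ensure that the $|x|$ companion copies and $|y|$ pattern copies can be glued or tubed along their common boundary region without creating handles beyond those already counted, so that $x(F)\le(2g-1)|x|+(2g'-1)|y|$ exactly. The plumbing hypothesis $\tau(\xi)=\eta$, $\tau(\eta)=-\xi$ is precisely the mechanism that lets the two Seifert pieces be carried into $X_K$ along transverse coordinate directions and joined without interference, because the interchange of $\xi$ and $\eta$ lines up the longitudinal slope of one factor with the zero-norm slope of the other. Concretely I would exhibit, for the basis classes, a surface of complexity $2g-1$ representing $\xi$ and a surface of complexity $2g'-1$ representing $\eta$, each realizing the respective seminorm, and then invoke the subadditivity from Lemma~\ref{property-norm} together with the homogeneity to conclude $\norm{\gamma}_K\le(2g-1)|x|+(2g'-1)|y|$. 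The delicate point is verifying that these two basis surfaces genuinely lie in $X_K$ (not merely in the desatellite or companion complements separately), which requires tracing the satellite gluing $X_K=Y\cup X_{K^\tau_b}$ and checking that the plumbing geometry permits both to coexist; this is where I expect the real work to be, the rest being the routine subadditivity bookkeeping.
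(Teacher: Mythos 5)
Your strategy is correct in outline, but it takes a genuinely different route from the paper's own proof of this lemma. Both arguments begin with the same reduction: by Lemma~\ref{property-norm} it suffices to prove $\norm{\xi}_K\leq 2g-1$ and $\norm{\eta}_K\leq 2g'-1$. From there the paper argues algebraically rather than by building surfaces: writing $X_K=Y\cup X_{K_b}$, it observes that $\pi_1(Y)\cong\ZZ\times\pi_1(R_{b'})\to\pi_1(X_K)$ factors through $\ZZ\times\pi_1(M_{k_{b'}})$ (the plumbing gluing sends the $\eta$-direction of $\partial_\cpn Y$ to the slope $S^1\times\pt$ of $K_b$, which is null-homotopic in $X_{K_b}$, so the curve killed in forming the classical knot complement $M_{k_{b'}}$ from $R_{b'}$ also dies in $\pi_1(X_K)$); hence the commutator length of $\eta$ in $\pi_1(X_K)$ is at most that of the longitude of $k_{b'}$ in its knot group, which is at most $g'$ since the longitude bounds a genus-$g'$ Seifert surface. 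Likewise $\xi$, being a fiber of $Y=S^1\times R_{b'}$, free-homotopes onto the longitude slope of $k_b$ on $\partial X_{K_b}$, so its commutator length is at most $g$. Since commutator length equals singular genus (Remark~\ref{cl}) and $\norm{c}_K\leq 2\,g^\star_K(c)-1$ (Lemma~\ref{finitenessSeminorm}), the two bounds follow. Your surface-theoretic route is viable---indeed the paper itself carries it out later, in Lemma~\ref{niceSurfaces}, where embedded surfaces of genus $g$ and $g'$ bounded by $c_\xi$ and $c_\eta$ are constructed for the genus estimates of Proposition~\ref{plumbingGenera}---and it buys more (embedded representatives, with framing control), at the price of more bookkeeping; the paper's $\pi_1$ argument is shorter and sidesteps all capping and framing issues.

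Two points in your sketch need repair before it becomes a proof. First, the pattern-side surface does not lie ``in the tubular piece $Y$'': a Seifert surface of $k_{b'}$ meets the companion boundary $\partial_\cpn Y$ in finitely many curves in the $\eta$-direction, and these must be capped off inside $X_{K_b}$. This is possible precisely because the plumbing twist carries $\eta$ to $\mp\,\xi$ on $\partial X_{K_b}$, and the slope $S^1\times\pt$ of $K_b$ bounds a disk there (Proposition~\ref{btNorm}); dually, the companion Seifert surface must be joined to $\partial X_K$ by an annulus running through the fiber direction of $Y$ before it bounds $\xi$. So your remark that the twist ``lines up the longitudinal slope of one factor with the zero-norm slope of the other'' is exactly right, but it is the actual content of the construction---it is where the capping disks and the connecting annulus live---not a side observation. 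Second, your worry that the two basis surfaces must coexist in $X_K$ is unnecessary for this lemma: subadditivity in Lemma~\ref{property-norm} uses each surface separately, so no disjointness, transversality, or tubing is required; those concerns only become relevant for the embedded genus bound of Proposition~\ref{plumbingGenera}(2).
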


        \begin{proof}
            Because $\norm{\cdot}_K$ is a seminorm (Lemma~\ref{property-norm}), it suffices to prove $\norm{\xi}_K
            \leq 2g-1$ and $\norm{\eta}_K\leq 2g'-1$. The complement $X_K$ is the union of the companion piece
            $X_{K_b}=S^4-K_b$ and the pattern piece $Y=\thicktorus-P_{b'}$. Note that $\pi_1(X_{K_b})=\pi_1(M_{k_b})$
            where $M_{k_b}=S^3-k_b$ is the knot complement,
            and $\pi_1(Y)=\ZZ \times \pi_1(R_{b'})$ where $R_{b'}=S^1\times D^2-b'$ is the braid complement. From the construction
            it is clear that $\pi_1(Y)\to\pi_1(X_K)$ factors through the desatellite on the first factor, namely,
            $\ZZ \times \pi_1(M_{k_{b'}})$, so the commutator length of $\eta$ in $\pi_1(X_K)$ is at most that of $\eta$
            in $\pi_1(M_{k_{b'}})$, which is $2g'$. Moreover, the slope $\xi\in\partial X_K$ can be free-homotoped
            to a slope $\xi_\cpn$ on $\partial X_{K_b}$ since it is a fiber of $Y=S^1 \times R_{b'}$, and by the construction,
            it is clear that $\xi_\cpn$ represents the longitude slope of $\pi_1(\partial M_{k_b})$ in $\pi_1(M_{k_b})\cong
            \pi_1(X_{K_b})$, so the commutator length of $\xi$ in $\pi_1(X_K)$ is at most that of $\xi_\cpn$
            in $\pi_1(M_{k_b})$, which is $2g$. This proves the lemma because the commutator length equals the singular
            genus $g^\star_K$, which gives
            upper bounds for the seminorm $\norm{\cdot}_K$ on slopes, (Remark~\ref{cl} and Lemma~\ref{finitenessSeminorm}).
        \end{proof}

        Now Proposition~\ref{braid-braid} follows from Lemmas~\ref{braid-braid-inequality},~\ref{plumbingSeminorm}.
        
        \begin{remark}\label{rhombus}
        	For plumbing braid satellites, since the norm is given by $\norm{\gamma}_K=(2g'-1)|y|+(2g-1)|x|$,
        	the unit ball of the norm of plumbing satellite is the rhombus on the plane with 
        	the vertices $(\pm\frac{1}{2g-1},0)$ and $(0,\pm\frac{1}{2g'-1})$.
        \end{remark}

    \subsection{On genera of plumbing braid satellites}\label{Subsec-plumbingBraidSatellite}
        In this subsection, we estimate the singular genera and the genera of slopes for plumbing braid satellites.
        While we obtain a pretty nice estimate for
        the singular genera, with the error at most one,
        we are not sure how close our genera upper bound is to being the best possible.

        \begin{proposition}\label{plumbingGenera}
            Suppose $b,b'$ are braids with nontrivial associated knots, and
            $K$ is the plumbing braid satellite $K^\tau_b\cdot P_{b'}$. Then for
            every slope $c\subset K$, the following statements are true:
            \begin{enumerate}
                \item
                    The singular genus satisfies:
                        $$\frac{\norm{c}_K+1}2\,\leq\, g^\star_K(c)\,\leq\,\frac{\norm{c}_K+3}2.$$
                    In particular, if $c$ represents
                    $x\,\xi+y\,\eta$ with both $x$ and $y$ odd, 
                    then $g^\star_K(c)=\frac{\norm{c}_K}2+1$.
                \item
                    If $c$ represents $x\,\xi+y\,\eta$ in $H_1(T^2)$, where $x,y$ are coprime integers,
                    then the genus satisfies:
                        $$g_K(c)\,\leq\,g\cdot|x|\,+\,g'\cdot|y|\,+\,\frac{(|x|-1)(|y|-1)}2,$$
                    where $g,g'>0$ denote the genera of the associated knots $k_b,k_{b'}$ in $S^3$,
                    respectively.
            \end{enumerate}
            \end{proposition}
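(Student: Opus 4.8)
My plan is to prove both bounds by explicit surface constructions, using the value of the seminorm recorded in Remark~\ref{rhombus}: for a slope $c$ representing $x\,\xi+y\,\eta$ (so that $\gcd(x,y)=1$) one has $\norm{c}_K=(2g-1)|x|+(2g'-1)|y|$. The lower bound in (1) is then immediate, since $g,g'>0$ forces $\norm{c}_K>0$ for every nonzero class, so Lemma~\ref{finitenessSeminorm} gives $g^\star_K(c)\ge(\norm{c}_K+1)/2$. I will freely use Remark~\ref{cl}, namely that $g^\star_K(c)=\mathrm{cl}(c)$ with $c$ regarded in the commutator subgroup of $\pi_1(X_K)$. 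Finally I note that the ``in particular'' clause requires no extra work: when $x$ and $y$ are both odd, $\norm{c}_K$ is even, and the two displayed inequalities together with the integrality of $g^\star_K$ force $g^\star_K(c)=\norm{c}_K/2+1$. Thus the real content of (1) is the upper bound $g^\star_K(c)\le(\norm{c}_K+3)/2$.

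For this upper bound I first recall, from the proof of Lemma~\ref{plumbingSeminorm}, that $\xi$ and $\eta$ are carried to the longitudes $u,v\in\pi_1(X_K)$ of the companion knot $k_b$ and the desatellite knot $k_{b'}$; these commute, since they come from a torus in $\partial X_K$, so $c$ is represented by $u^{x}v^{y}$. Writing $u=\prod_{i=1}^{g}[a_i,b_i]$ and $v=\prod_{j=1}^{g'}[c_j,d_j]$ via the two Seifert surfaces, the key input is an efficient estimate for powers of such commutator products: a Culler-type surface construction gives $\mathrm{cl}(u^{|x|})\le\lceil((2g-1)|x|+1)/2\rceil$ and $\mathrm{cl}(v^{|y|})\le\lceil((2g'-1)|y|+1)/2\rceil$. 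By subadditivity of commutator length (realized by banding a surface for $u^{x}$ to one for $v^{y}$ along a boundary arc),
\[
    g^\star_K(c)=\mathrm{cl}(u^{x}v^{y})\le\mathrm{cl}(u^{|x|})+\mathrm{cl}(v^{|y|}).
\]
Each of these ceilings exceeds $(2g-1)|x|/2$, respectively $(2g'-1)|y|/2$, by $1/2$ when the exponent is odd and by $1$ when it is even; since $\gcd(x,y)=1$ the exponents are not both even, so the total excess over $\norm{c}_K/2$ is at most $3/2$, which yields exactly $g^\star_K(c)\le(\norm{c}_K+3)/2$.

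For (2) I construct an embedded surface using the plumbing decomposition $X_K=X_{K_b}\cup Y$ with $Y\cong S^1\times R_{b'}$, glued along $\partial_\cpn Y$. On the companion side $X_{K_b}$ I take $|x|$ disjoint parallel copies of a minimal-genus (genus $g$) Seifert surface of $k_b$, with boundaries $|x|$ parallel $\xi$-curves pushed onto the interface; on the pattern side $Y$ I take $|y|$ disjoint parallel copies of a genus-$g'$ Seifert surface of $k_{b'}$, with boundaries $|y|$ parallel $\eta$-curves. Because the two families lie on opposite sides of $\partial_\cpn Y$, they are disjoint in the interiors and meet only in a collar of the interface, where each $\xi$-boundary crosses each $\eta$-boundary once, giving $|x||y|$ crossings. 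Performing the oriented (Seifert-algorithm) smoothing at each crossing keeps the surface embedded, and since $\gcd(x,y)=1$ it joins the boundary multicurve into a single curve isotopic to $c\times\pt$. An Euler-characteristic count then finishes: the pieces contribute $\chi=|x|(1-2g)+|y|(1-2g')$, the $|x||y|$ smoothing bands subtract $|x||y|$, and the single resulting boundary component gives genus $g|x|+g'|y|+(|x|-1)(|y|-1)/2$.

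I expect the main obstacle in (1) to be the sharp efficient power estimate: one must verify, with the correct parity-dependent constant, that powers of a genus-$g$ Seifert relator bound connected surfaces of the stated genus. Here Culler's method is essential—naively banding $|x|$ parallel Seifert surfaces costs genus $\approx g|x|$ rather than $\approx(g-\tfrac12)|x|$—and it is exactly the careful bookkeeping of the two excesses, together with the exclusion of the both-even case by $\gcd(x,y)=1$, that pins down the constant $3$. The main obstacle in (2) is geometric: arranging the two families of Seifert surfaces to be properly and disjointly embedded away from the interface, which is precisely what the plumbing structure supplies, and checking that the oriented smoothings can be realized by embedded bands producing a single boundary curve isotopic to $c\times\pt$.
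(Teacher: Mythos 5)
Your part (1) is correct, and it takes a genuinely different route from the paper's. The paper first constructs two disjoint \emph{embedded} surfaces $E,E'\hookrightarrow X_K$ of genera $g,g'$ bounded by $c_\xi\times\pt$ and $c_\eta\times\pt$ (Lemma~\ref{niceSurfaces}), then applies Massey's theorem \cite{Massey} to pass to connected $|x|$-- and $|y|$--fold covers of $E$ and $E'$ having one or two boundary circles, and bands the results together. Your argument never leaves the group level: you write $c$ as $u^xv^y$ for the commuting images of the two longitudes (legitimately extracted from the proof of Lemma~\ref{plumbingSeminorm}), and combine subadditivity of commutator length with the power estimate $\mathrm{cl}(u^n)\le\lceil (n(2g-1)+1)/2\rceil$. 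Your parity bookkeeping (excess $1/2$ for an odd exponent, $1$ for an even one, with coprimality excluding the even--even case) is exactly right and reproduces the constant $3$. The two proofs are the same covering-space idea in different packaging: the power estimate you attribute to Culler is proved by precisely the Massey-cover-and-band argument that the paper runs on $E$ and $E'$. What your phrasing buys is that part (1) becomes independent of the embedded-surface construction; what it costs is that the key estimate is invoked rather than proved---you flag this yourself---but since it is standard, that is a presentational rather than mathematical defect.

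Part (2) has a genuine gap. Your construction posits $|y|$ disjoint copies of a genus-$g'$ Seifert surface of $k_{b'}$ properly embedded in the pattern side $Y'\cong S^1\times R_{b'}$ with boundary only on $\partial X_K$, so that the $\xi$--family (in $X_{K_b}$) and the $\eta$--family (in $Y'$) are separated by the interface and hence automatically disjoint. No such surfaces exist: a braid $b'$ has winding number $w'\neq 0$, so the longitude of $b'$ is not null-homologous in the braid exterior $R_{b'}$, and every Seifert surface of $k_{b'}$ must cross $\partial(S^1\times D^2)$. Consequently the portion of such a surface lying in $Y'$ has, besides its $\eta$--boundary, additional boundary circles on the interface, and these must be continued \emph{through the companion side} and eventually capped off in the complement $X_0$ of the unknotted torus. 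This is exactly what the paper's Lemma~\ref{niceSurfaces} does, with the vertical annuli $c_\eta\times\alpha'_i$ chosen in $R_b-S_b$ and capping disks in $X_0$; it is these choices, not a separation of sides, that make $E$ and $E'$ disjoint. A second omission: to place $|x|$ (resp.\ $|y|$) disjoint copies whose boundaries are parallel curves on the single torus $K\times\pt$, ready to be smoothed at $|xy|$ points, one needs the relative Euler number of the normal framing to vanish; the paper establishes $e(E;K)=e(E';K)=0$ in Lemma~\ref{niceSurfaces} and invokes it for exactly this purpose, whereas your proposal never addresses the framing (without it, forcing all boundaries onto $K\times\pt$ creates interior intersections among the parallel copies, each costing genus). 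A smaller point of the same kind: your $\xi$--family as described ends on the interface rather than on $\partial X_K$ where $c$ lives, so each copy must first be extended by a fiber annulus through $Y'$ (the paper's $E_{Y'}$). Your concluding Euler-characteristic count, and the observation that the oriented smoothing of the boundary curves yields a single curve of slope $c$ because $\gcd(x,y)=1$, agree with the paper and are fine once the surfaces are actually in place; but producing those surfaces is the real content of part (2), and it is missing.
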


            We prove Proposition~\ref{plumbingGenera} in the rest of this subsection.
            We shall rewrite
            the slopes $S^1\times\pt,\,\pt\times S^1\subset T^2$ as $c_\xi,c_\eta$, respectively.

			We need the notion of Euler number to state the next lemma.
			Let $Y$ be a simply connected, closed oriented $4$-manifold, and let $K:T^2\hookrightarrow Y$ be 
	        a null-homologous knotted torus embedded in $Y$.
	        Let $X=Y-K$ be the compact exterior of the knotted torus.
	        For any locally flat, properly embedded
        	compact oriented surface with connected boundary,
        	$F\hookrightarrow X$, such that $\partial F$ is mapped homeomorphically onto a slope $c\times\pt$
        	of $K\times\pt$, (which exists by Lemma \ref{bounding},)
        	we may 
        	take a parallel copy $c\times\pt'\subset K\times\pt'$ of the slope, and perturb $F$ to be
        	another locally flat, properly embedded copy $F'\hookrightarrow X$ bounded by $c\times\pt'$, so that $F$, $F'$ are in general
        	position. The algebraic sum of the intersections between $F$ and $F'$ gives rise to an integer:
           		$$e(F;K)\in\ZZ,$$
        	which is known as the \emph{Euler number} of
        	the normal framing of $F$ induced from $K$.
        	In fact, one can check that $e(F;K)$ only depends
        	on the class $[F]\in H_2(X,K\times\pt)$.
        	If $Y$ is orientable but has no preferable choice of orientation,
        	we ambiguously speak of the Euler number up to sign.
  
            \begin{lemma}\label{niceSurfaces}
                There exist two disjoint, properly embedded, orientable compact surfaces
                $E,E'\hookrightarrow X_K$, bounded by the slopes $c_\xi\times p$, $c_\eta\times p'$
                in two parallel copies of the knotted torus
                $K\times p, K\times p'\subset \partial X$, respectively. Moreover,
                the genera of $E,E'$ are $g,g'$, respectively, and the Euler number of
                the normal framing $e(E;K)=e(E';K)=0$.
            \end{lemma}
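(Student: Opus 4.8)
The plan is to make $K$ completely explicit as a product and then build $E,E'$ as minimal genus Seifert surfaces of $k_b$ and $k_{b'}$ sitting in two transverse three–dimensional slices. First I would recall from the proof of Proposition~\ref{btNorm} that the companion braid torus $K_b$ factorizes through a standard $S^1\times D^3\hookrightarrow S^4$, so that $K_b=S^1\times k_b$ with $k_b\subset D^3$ and $\mathcal{N}(K_b)=S^1\times\mathcal{N}(k_b)=S^1\times S^1\times D^2$. Feeding in the plumbing twist $\tau$ (which interchanges the two circle factors up to sign) together with the pattern $P_{b'}=\id\times b'$, one checks that the satellite $K=K_b^\tau\cdot P_{b'}$ is the product $K=S^1_k\times k_{b'}$, where $k_{b'}=\widehat{b'}$ is the closed braid running in the spinning direction and $S^1_k$ is the $k_b$–knotted direction. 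Under this identification the two slopes become transparent: $c_\xi$ is a parallel copy of $k_b$ lying in a single spinning slice $\{\sigma_0\}\times D^3$, while $c_\eta$ is the closed braid $k_{b'}$ lying in a single $k_b$–angle slice, which completes to a standard $S^3$ by capping the spinning circle with the meridian disk supplied by the companion.

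Next I would take $E$ to be a minimal genus Seifert surface of $k_b$ inside $\{\sigma_0\}\times D^3$ and $E'$ a minimal genus Seifert surface of $k_{b'}$ inside that $S^3$. These have genera $g$ and $g'$ by construction, and they avoid $K$: in each slice $K$ appears as parallel $0$–framed copies of the relevant knot (the framing built into the product structure of $\mathcal{N}(K_\cpn)$, under which slopes are null–homologous in the exterior), and a Seifert surface can be pushed off such copies.

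The hard part will be disjointness. The two slices $\{\sigma_0\}\times D^3$ and the capped $S^3$ meet in a common normal two–disk $D^2$ (the companion meridian disk at the shared point), and generically $E$ and $E'$ each trace an arc in this $D^2$, so they would meet in points. To remove these I would exploit the remaining freedom: position the pushed–off boundary $c_\xi\times\pt'$ at an outer radial point of $D^2$, away from the braid, and route $E$ so that it enters $D^2$ radially outward, while keeping $E'$ concentrated near $k_{b'}$ at the center; the two families of arcs are then disjoint, so $E\cap E'=\emptyset$. This local separation in the meridian disk is the main technical obstacle, and I expect it to be exactly where the plumbing hypothesis (the $90^\circ$ twist, which places the two knottings in independent directions) is used.

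Finally, for the Euler numbers I would observe that the framing of $c_\xi$ induced by $K$ is obtained by pushing off within the torus $K$, i.e.\ into a nearby slice transverse to $\{\sigma_0\}\times D^3$; since the product structure on $\mathcal{N}(K_\cpn)$ was chosen so that slopes are null–homologous in the companion exterior, this induced framing is the $0$–framing, which the Seifert surface $E$ realizes. Hence a parallel copy $E^{+}$ bounding $c_\xi\times\pt'$ can be taken in a disjoint slice, giving $E\cdot E^{+}=0$, that is $e(E;K)=0$; the identical argument with the roles of the two directions reversed gives $e(E';K)=0$.
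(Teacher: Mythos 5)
Your structural observation is correct, and it is essentially the same one the paper exploits: writing $K_b=S^1\times k_b\subset S^1\times D^3\subset S^4$, the plumbing twist turns the satellite into a genuine product $K=\hat{b}'\times k_b$, where the closed braid $\hat{b}'$ sits in the solid torus $W=S^1_{\mathrm{spin}}\times D^2$ (spinning circle times normal disk of $k_b$). Your surface $E$ is also essentially right: a spinning slice $\{\sigma_0\}\times D^3$ is a $3$--ball meeting $K$ in finitely many parallel Seifert-framed copies of $k_b$, all contained in $\mathcal{N}(k_b)=k_b\times D^2$, so a genus-$g$ Seifert surface of $k_b$ taken inside the ball and arranged to meet $k_b\times D^2$ in a single annulus threading past those strands is properly embedded in $X_K$, bounds the push-off of $c_\xi$, and a parallel copy of it in a nearby disjoint slice gives $e(E;K)=0$. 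This half is, if anything, cleaner than the paper's construction of $E$.

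The genuine gap is in $E'$: the ``capped $S^3$'' you want does not exist inside $S^4-K$, and the obstruction is homological, not a matter of positioning. Any embedded $3$--sphere $\Sigma\subset S^4$ transverse to $K$ separates $S^4$, so $K\cap\Sigma=\partial\left(K\cap \overline{B}\right)$ for one of the complementary components $B$, whence $[K\cap\Sigma]=0$ in $H_1(K;\ZZ/2\ZZ)$. Your proposed sphere --- the slice solid torus $W_{u_0}=S^1_{\mathrm{spin}}\times D^2_{u_0}$ together with a solid torus of caps disjoint from $K$ --- would meet $K$ transversally in exactly the closed braid, i.e.\ in the essential slope $c_\eta$, which is nonzero in $H_1(K;\ZZ/2\ZZ)$: a contradiction. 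Concretely, such a capping solid torus would force an embedded annulus in $D^3-\mathcal{N}(k_b)$ from the meridian circle $\partial D^2_{u_0}$ out to $\partial D^3$; capping that annulus with a disk in $\partial D^3$ would make the meridian of $k_b$ null-homologous in $D^3-k_b$, contradicting that it generates $H_1(D^3-k_b)\cong\ZZ$. The root of the problem is that, since $w(b')\neq 0$, the closed braid is not null-homologous in $W_{u_0}$, so any Seifert surface for it must exit the slice through the caps --- and the caps cannot all avoid $K$ if they are to sweep out a solid torus. The repair is exactly what the paper does: one only needs to cap the finitely many spinning-parallel boundary circles of the braid-exterior Seifert piece inside $W_{u_0}$, and these can be capped by pairwise disjoint embedded disks, each of the form (spinning circle)$\times$(arc in $D^3-\mathcal{N}(k_b)$ running out to $\partial D^3$) capped off by a fiber disk of $D^2\times S^2$; such disks avoid $K$, can be chosen disjoint from $E$ by taking the arcs off the Seifert surface of $k_b$, but they do not assemble into a solid torus and lie in no single slice (these are the paper's annuli $c_\eta\times\alpha_i'$ in $Y$ together with the capping disks in $X_0$). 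Note finally that the plumbing hypothesis is spent at the very start, in producing the product structure; your ``hard part,'' the local disjointness in the common normal disk, is not where the difficulty lies.
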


            \begin{proof}
                Regarding $K$ as $T_{\tt std}\cdot P_b^\tau\cdot P_{b'}$, there is a natural decomposition:
                    $$X_K=X_0\cup Y\cup Y',$$
                where $X_0$ is the compact complement of the unknotted torus
                $T_{\tt std}$ in $S^4$, and $Y,Y'$ are the exteriors of $P_b,P_{b'}$ in
                the thickened torus $\thicktorus$, respectively. Moreover, $Y$ (resp.~$Y'$) has a
                natural product structure $c_\eta \times R_b$ (resp.~
                $c_\xi \times R_{b'}$), where $R_b$ (resp.~$R_{b'}$) denotes the exterior
                of the braid $b$ (resp.~$b'$) in the solid torus $S^1\times D^2$. As before, $\partial Y$
                (resp.~$\partial Y'$) has two components $\partial_\cpn Y$ and $\partial_\sat Y$
                (resp.~$\partial_\cpn Y'$ and $\partial_\sat Y'$). Thus $\partial X_0$ is glued to
                $\partial_\cpn Y$, and $\partial_\sat Y$ is glued to $\partial_\cpn Y'$,
                and $\partial_\sat Y'$ is exactly $\partial X_K$.

                The knot complement $M_{k_b}=S^3-k_b$ is the union of $R_b$ with a solid torus
                $S^1\times D^2$. From the classical knot theory,
                there is a Seifert surface $S$ of $k_b$ properly embedded in
                $M_{k_b}=S^3-k_b$ of genus $g$, and one can arrange
                $S$ so that it intersects $S^1\times D^2$ in a finite collection
                of $n\geq w$ disjoint parallel fiber disks. Thus $S_b=S\cap R_b$
                is a connected properly embedded orientable compact surface, so that $\partial S_b$
                has one component on $\partial_\sat R_b$ parallel to the longitude $s$, and
                $n$ components $c_1,\cdots,c_n$ on $\partial_\cpn R_b$ parallel to to $\pt\times\partial D^2$.
                Similarly, take a connected subsurface $S_{b'}\subset R_{b'}$ with $n'$ boundary components
                $c'_1,\cdots,c'_{n'}$ on the companion boundary, and one boundary component
                $s'$ on the satellite boundary.

                Construct a properly embedded compact annulus $E_{Y'}$ in $Y'=c_\xi \times R_{b'}$ by taking the
                product of $c_\xi$ with some arc $\alpha\subset R_{b'}-S_{b'}$, so that the two end-points lie on $\partial_\cpn R_{b'}$
                and $\partial_\sat R_{b'}$, respectively. Construct a properly embedded compact surface
                $E'_{Y'}\subset Y'$ by taking the product of $S_{b'}$ with some point in $c_\xi$.
                Similarly, construct a properly embedded compact surface $E_Y$ in $Y=c_\eta \times R_b$
                by taking a product of $S_b$ with some point in $c_\eta$; and construct
                a union of $n'$ annuli $E'_{Y}$ by taking the product of $c_\eta$ with $n'$ disjoint
                arcs $\alpha'_1,\cdots,\alpha'_{n'}$ in $R_b-S_b$, each of whose end-points lie on
                $\partial_\cpn R_b$ and $\partial_\sat R_b$, respectively. Under the gluing, we obtain
                two disjoint properly embedded surface $E_Y\cup E_{Y'}$ and $E'_Y\cup E'_{Y'}$ in $Y\cup Y'$,
                whose boundaries on $\partial_\sat Y'=\partial X_K\cong K\times S^1$ are
                $c_\xi\times\pt$ and $c_\eta\times\pt$, respectively. Moreover,
                it is clear that $\partial (E_Y\cup E_{Y'})$ has $n$ other boundary
                components on $\partial_\cpn Y=\partial X_0\cong T_{\tt std}\times S^1$, parallel
                to $c_\eta\times\pt$; and $\partial (E'_Y\cup E'_{Y'})$ has $n'$ other boundary components
                on $\partial_\cpn Y$, parallel to $c_\xi\times\pt$.

                It is not hard to see that one can cap off these other boundary components with disjoint
                properly embedded disks in $X_0$. In fact, we may regard $T_{\tt std}:T^2\hookrightarrow S^4$ as
                the composition:
                    $$T^2\,\cong\,c_\xi\times c_\eta\,\hookrightarrow\, c_\xi\times D^3\,\hookrightarrow\, S^4,$$
                where $c_\eta$ is a trivial knot in $D^3$. Thus the components of $\partial (E'_Y\cup
                E'_{Y'})$ that lie on $\partial X_0$ can be capped off in $c_\xi\times D^3$ disjointly.
                Moreover, the
                components of $\partial (E_Y\cup E_{Y'})$ lying on $\partial X_0$
                can be isotoped to the boundary of $c_\xi\times D^3$,
                so that they are all $c_\xi$--fibers.
                Because $S^4-c_\xi\times D^3$ is homeomorphic to $D^2\times S^2$, we may further cap off these
                fibers in the complement of $c_\xi\times D^3$ in $S^4$.

                It is straightforward to check that
                capping off $E_Y\cup E_{Y'}$ and $E'_Y\cup E'_{Y'}$ result in the surfaces $E$ and
                $E'$ respectively, as desired. Note that $e(E;K)$ vanishes
                because we can perturb the construction above to obtain a surface
                disjoint from $E$ bounding a slope parallel to $c_\xi\times\pt$ in
                $K\times\pt$. For the same reason, $e(E';K)=0$ as well.
            \end{proof}

            \begin{proof}[{Proof of Proposition~\ref{plumbingGenera}}]
                (1) It suffices to show the upper bound. By Lemma~\ref{niceSurfaces}, there are properly embedded
                surfaces $E,E'$ in $X_K$ bounded by $c_\xi\times\pt, c_\eta\times\pt$, respectively,
                and the complexity of $E$ and $E'$ realizes $\norm{c_\xi}_K$ and $\norm{c_\eta}_K$,
                respectively, (Proposition~\ref{braid-braid}). Suppose $c\subset K$ is a slope
                representing $x\xi+y\eta$. By the main theorem of \cite{Massey}, there exists an $|x|$--sheet connected covering space
                $\tilde{E}$ of $E$, which has exactly
                one boundary component if $x$ is odd, or two boundary components if $x$ is even.
                By the same method, there is also $\tilde{E'}$ which is connected $|y|$--sheet 
                covering $E'$ with one or two boundary components. 
                Since $x$ and $y$ are coprime, at most one of them is even,
                so $\tilde{E}\cup \tilde{E'}$ have at most three components.
                Then there are immersions of these surfaces into $X_K$, and by homotoping the image
                of their boundaries to $K\times\pt$ and taking the band sum to make them connected, we obtain an immersed
                subsurface $F\looparrowright X_K$ bounding the slope $c$. Since we need to add up to two bands to
                make the boundary of $F$ connected, this yields:
                    $$2\,g^\star_K(c)-1\,\leq\,-\chi(F)\,\leq\, (-\chi(E))\cdot|x|\,+\,(-\chi(E'))\cdot|y|+2\,=\,\norm{c}_K+2.$$
                Note that the last equality follows from Proposition \ref{braid-braid} as we assumed $K$ is
                the plumbing braid satellite. This proves the first statement. The `in particular' part
                is also clear because when $x,y$ are both odd, 
                $\norm{c}_K$ is an even number by the formula, so $\frac{\norm{c}_K}2+1$
                is the only integer satisfying our estimation.

                (2) In this case, we take $|x|$ copies of the embedded surface $E$, and $|y|$ copies of the
                embedded surface $E'$, in $X_K$. Because the Euler numbers of the normal framing are zero for $E$ and
                $E'$, we may assume these copies to be disjoint. Isotope their boundaries to $K\times\pt$ in $\partial X_K$,
                we see $|x|$ slopes parallel to $c_\xi$, and $|y|$ slopes parallel to $c_\eta$. As there are $|xy|$ intersection
                points, we take $|xy|$ band sums to obtain a properly embedded surface $F\hookrightarrow X_K$ bounding
                the slope $c$. There are $|x|+|y|-1$ bands
                that contribute to making the boundary of $F$ connected, and each of the other $|xy|-|x|-|y|+1$
                bands contributes one half to the genus of $F$. This implies:
                    $$g_K(c)\,\leq\,g(F)\,=\,g\cdot|x|\,+\,g'\cdot|y|\,+\,\frac{(|x|-1)(|y|-1)}2,$$
                as desired.
            \end{proof}

\section{Miscellaneous examples}\label{Sec-misc}
	In this section, we exhibit examples to show difference between concepts
	introduced in this note.
		
	\subsection{Slopes with vanishing seminorm but positive singular genus}\label{Subsec-incompressibleKleinBottles}
	    Note that we have already seen slopes whose singular genus do not realize
    	nonvanishing seminorm in plumbing braid satellites, (cf.~Proposition~\ref{braid-braid}).
    	There are also examples where the seminorm vanishes on some slope with positive
	    singular genus, as follows.
    	Our construction is based on the existence of incompressible knotted Klein bottles.
	
		Denote the Klein bottle as $\Phi^2$. A \emph{knotted Klein bottle} in $S^4$ is a locally flat
	    embedding $K:\Phi^2\hookrightarrow S^4$. We usually denote its image also as $K$, and the
	    exterior $X_K=S^4-K$ is obtained by removing an open regular neighborhood of $K$
	    from $S^4$ as before in the knotted torus case. We say a knotted Klein bottle $K$ is \emph{incompressible}
	    if the inclusion $\partial X_K\subset X_K$ induces an injective homomorphism between
	    the fundamental groups. There exist incompressible Klein bottles in $S^4$, see \cite[Lemma 4]{Kamada-essentialKnot}.
	
	    Incompressible knotted Klein bottles give rise to examples of
    	slopes on knotted tori which have vanishing seminorm but positive singular genus.
	
	    Specifically, let $K:\Phi^2\hookrightarrow S^4$ be an incompressible knotted Klein bottle.
	    Suppose $\kappa:T^2\to\Phi^2$ is a two-fold covering of the Klein bottle $\Phi^2$. Pertubing
	    $K\circ\kappa:\,T^2\to S^4$ in the normal direction of $K$ gives rise to a knotted torus:
        	$$\tilde{K}:\,T^2\hookrightarrow S^4.$$
	
	    \begin{lemma}\label{seminormVersusGenus}
    	    With the notation above, $\tilde{K}$ has a slope $c$ such that $\norm{c}_{\tilde{K}}=0$,
	        but $g^\star_{\tilde{K}}(c)>0$.
	    \end{lemma}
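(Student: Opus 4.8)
The plan is to take $c$ to be the slope of $\tilde K$ double-covering the fibre-type $2$-torsion curve of the Klein bottle. Write $\pi_1(\Phi^2)=\langle\alpha,\beta\mid\alpha\beta\alpha^{-1}=\beta^{-1}\rangle$, where $\beta$ is orientation-preserving and $\alpha$ is orientation-reversing. The orientation double cover $\kappa\colon T^2\to\Phi^2$ corresponds to the index-two subgroup $\langle\alpha^2,\beta\rangle\cong\ZZ^2=\pi_1(T^2)$; the nontrivial deck transformation $\sigma$ acts by conjugation by $\alpha$, so $\sigma_*(\alpha^2)=\alpha^2$ and $\sigma_*(\beta)=\beta^{-1}$. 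In particular $\sigma$ is orientation-reversing on $T^2$ and interchanges the two lifts of $\beta$. I let $c\subset T^2$ be one of these lifts, an essential simple closed curve mapping homeomorphically onto the fibre $\beta\subset\Phi^2$. Since $\tilde K$ is obtained by pushing $K\circ\kappa$ off along the pulled-back (hence orientable) normal bundle, the copy $K(\Phi^2)$ may be taken disjoint from $\tilde K(T^2)$, so that $\mathcal{N}(\tilde K)\subset\mathcal{N}(K)$ and hence $X_K\subset X_{\tilde K}$ and $K(\Phi^2)\subset X_{\tilde K}$; moreover the push-off supplies an annulus exhibiting $c$ as freely homotopic in $X_{\tilde K}$ to $\beta$.

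For the vanishing of the seminorm I would exploit that $\beta$ is freely homotopic to $\beta^{-1}$ in $\Phi^2$. Cutting $\Phi^2$ open along the fibre $\beta$ produces an orientable annulus $\mathcal{A}$, mapped back into $\Phi^2$ by the quotient, whose two boundary circles both cover $\beta$ but with opposite orientations, so that $[\partial\mathcal{A}]=2[\beta]$ in $H_1(\Phi^2)$ (this is the familiar fact that $\beta$ is $2$-torsion). Regarding $\mathcal{A}$ inside $X_{\tilde K}$ via $K(\Phi^2)$ and capping its two ends off with the two push-off annuli joining $\beta$ to the lifts $c$ and $\sigma(c)$, I obtain an immersed annulus $\Sigma\looparrowright X_{\tilde K}$ with $\partial\Sigma$ lying on $\partial X_{\tilde K}$ and representing $2[c]\in H_1(\partial X_{\tilde K})$. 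Since $x(\Sigma)=0$, this gives $x(2c)=0$, whence $\norm{c}_{\tilde K}\le x(2c)/2=0$ and so $\norm{c}_{\tilde K}=0$ (equivalently $\mathrm{scl}(c)=0$ in $\pi_1(X_{\tilde K})$, cf.~Remark~\ref{scl}). The delicate point here is the orientation bookkeeping: one must check that the two ends of $\Sigma$ glue up so that $\partial\Sigma$ carries the class $2[c]$ rather than $0$, and this is exactly where the orientation-reversing deck transformation enters, since $\sigma(c)$ represents $-[c]$ in $H_1(T^2)$.

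For the positivity of the singular genus, recall $g^\star_{\tilde K}(c)=\mathrm{cl}(c)$ (Remark~\ref{cl}), so it suffices to show $c\ne1$ in $\pi_1(X_{\tilde K})$. As $c$ is freely homotopic to $\beta$ in $X_{\tilde K}$, this reduces to $\beta\ne1$ in $\pi_1(X_{\tilde K})$. By incompressibility of $K$ the curve $\beta$ is already nontrivial in $\pi_1(X_K)$, since a loop covering $\beta$ is nontrivial in $\pi_1(\partial X_K)$ and $\pi_1(\partial X_K)\hookrightarrow\pi_1(X_K)$. Thus I only need the inclusion $X_K\hookrightarrow X_{\tilde K}$ to be $\pi_1$-injective. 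I would prove this by van Kampen: write $X_{\tilde K}=X_K\cup_{\partial X_K}W$, where $W=\overline{\mathcal{N}(K)\setminus\mathcal{N}(\tilde K)}$ meets each normal disc fibre of $\mathcal{N}(K)$ in a disc with two subdiscs removed, i.e.~$W$ is a pair-of-pants bundle over $\Phi^2$ whose cuff glued to $X_K$ is the outer boundary. The outer cuff is a nontrivial element of the free fundamental group of the pants fibre, so $\pi_1(\partial X_K)\to\pi_1(W)$ is fibrewise injective; granting injectivity globally, $\pi_1(X_{\tilde K})$ is the corresponding amalgamated product and $\pi_1(X_K)$ injects, so $\beta\ne1$ in $\pi_1(X_{\tilde K})$ and $g^\star_{\tilde K}(c)\ge1>0$.

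I expect the main obstacle to be this last $\pi_1$-injectivity statement, namely controlling $\pi_1(W)$ and the gluing along the normal circle bundle $\partial X_K$: the pair-of-pants bundle is only as transparent as it looks on fibres, and one must verify that the base monodromy does not identify the outer cuff with a conjugate of its inverse in a manner that breaks injectivity. The secondary delicate point, as noted above, is confirming that $\partial\Sigma$ genuinely represents $2[c]$, which rests on $\sigma$ being orientation-reversing. Granting these, the two facts $\norm{c}_{\tilde K}=0$ and $g^\star_{\tilde K}(c)>0$ complete the proof and, in view of Lemma~\ref{finitenessSeminorm}, exhibit the promised gap between the singular genus and the seminorm.
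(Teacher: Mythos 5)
Your outline coincides with the paper's proof: the slope is a lift of the fibre curve $\beta\subset\Phi^2$, the seminorm is killed by an annulus built from the push-off, and positivity comes from the decomposition $X_{\tilde K}=X_K\cup_{\partial X_K}W$ (the paper's $Y$) together with a van Kampen amalgamation whose inputs are incompressibility of $K$ and $\pi_1$-injectivity of the boundary components of the pair-of-pants bundle $W$. However, the execution of the seminorm half fails as written: your annulus $\Sigma=A_1\cup\mathcal{A}\cup A_2$ has boundary class $0$, not $2[c]$. Check it at the chain level. Because the deck transformation $\sigma$ reverses the direction of the lifts of $\beta$, the two push-off annuli already end on $\beta$ with \emph{opposite} orientations: with $c$ and $c'=\sigma(c)$ oriented parallel, $\partial[A_1]=[\beta]-[c]$ while $\partial[A_2]=-[\beta]-[c']$, and $\partial[\mathcal{A}]=-2[\beta]$. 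The only sign choices $\epsilon_1[A_1]+\epsilon_2[\mathcal{A}]+\epsilon_3[A_2]$ that cancel the interior $[\beta]$-terms (equivalently, the only orientation-compatible gluings) are $\pm\bigl([A_1]+[\mathcal{A}]-[A_2]\bigr)$, whose boundary is $\pm\bigl([c']-[c]\bigr)=0$. In other words, $\mathcal{A}$ realizes a free homotopy from $\beta$ to its reverse, and this exactly undoes the orientation reversal that $\sigma$ has already supplied; your composite is a free homotopy from $c$ to $c'$ with \emph{parallel} orientations, which is useless. The repair is to discard $\mathcal{A}$ entirely: since $\kappa|_{c'}$ carries $c'$ to $\beta$ reversed, the two push-off annuli glue directly along $\beta$, orientation-compatibly, into an annulus with boundary $[c]+[c']=2[c]$. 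That is precisely the paper's annulus, realizing inside $W=\mathcal{N}(K)-\tilde K$ a free homotopy from $c$ to the orientation-reversal of $c'$. So your ``delicate point'' was the right thing to worry about, but it resolves against your construction rather than for it.

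The positivity half has a logical gap as well. You reduce to ``$\beta\neq1$ in $\pi_1(X_K)$'' and then invoke injectivity of $\pi_1(X_K)\to\pi_1(X_{\tilde K})$; but neither $c\times\pt$ nor $\beta$ is a loop in $X_K$. Your free homotopy lands on $\beta\subset K$, which lies in $W$, on the far side of $\partial X_K$; the loop in $\partial X_K$ ``covering $\beta$'' is a different curve, and you never connect it to $c\times\pt$ by a free homotopy in $X_{\tilde K}$ (doing so would require pushing $\beta$ out to $\partial\mathcal{N}(K)$ while avoiding $\tilde K$, an additional claim about how the push-off sheets wind that you do not prove). The paper's route avoids this entirely: granting the same two edge-injectivity hypotheses, the amalgamated product structure makes the \emph{other} vertex group $\pi_1(W)$ inject into $\pi_1(X_{\tilde K})$, and hence also $\pi_1(\partial X_{\tilde K})$, which includes into $\pi_1(W)$. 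One is then done at once: $c\times\pt$ is essential in $\partial X_{\tilde K}\cong T^2\times S^1$; alternatively, $\beta$ is visibly nontrivial in $\pi_1(W)$ because the bundle projection $W\to\Phi^2$ sends it to $\beta\neq1$ in $\pi_1(\Phi^2)$. So the amalgamation is the right tool, but you routed the conclusion through the wrong vertex group.
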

	
	    \begin{proof}
        	Let $\alpha\subset\Phi^2$ be an essential simple closed curve on $K$ so that $\kappa^{-1}(\alpha)$
        	has two components $c,c'\subset T^2$. Then $c,c'$ are parallel on $T^2$.
        	We choose orientations on $c,c'$ so that they are parallel as oriented curves.
        	Let $\mathcal{N}(K)$ be a compact
        	regular neighborhood of $K$ so that $Y=\mathcal{N}(K)-\tilde{K}$ is a pair-of-pants bundle
        	over $K$. Then $c$ is freely homotopic to the orientation-reversal of $c'$ within
        	$Y$. This implies that $2\,[c\times\pt]\in H_1(X_{\tilde K})$ is
        	represented by a properly immersed annulus $A\looparrowright X_{\tilde{K}}$ whose boundary
        	with the induced orientation equals $c\cup c'$. Therefore, $\norm{c}_K$ equals zero.
        	However, note that $X_{\tilde{K}}=X_{K}\cup Y$, glued
        	along $\partial X_K=\partial\mathcal{N}(K)$. Since $K$ is incompressible, $\partial X_K$
        	is $\pi_1$--injective in $X_K$. It is also clear that both components
        	of $\partial Y$ are $\pi_1$--injective in $Y$. It follows that $\pi_1(Y)$ injects into
        	$\pi_1(X_{\tilde{K}})$, and also that $\pi_1(\partial X_{\tilde{K}})$ injects into $\pi_1(X_{\tilde{K}})$. Therefore,
        	the slope $c\times\pt$ in $\partial X_{\tilde{K}}\cong \tilde{K}\times S^1$ is homotopically nontrivial in
        	$\pi_1(X_{\tilde{K}})$, so $g^\star_{\tilde K}(c)$ cannot be zero.
    	\end{proof}
	
	\subsection{Stably extendable but not extendable automorphisms}\label{Subsec-vanishingSingularGenus}
	    It is clear that the stable extendable subgroup $\esg^\stable_K$ contains the
	    extendable subgroup $\esg_K$ for any knotted torus $K:T^2\hookrightarrow S^4$.
	    They are in general not equal.
	    In fact, we show that Dehn twist along a slope with 
 		vanishing singular genus is stably extendable (Lemma \ref{vanishingSingGenus}). 
  		In particular, it follows that
  		for any unknotted embedded torus $K$, 
  		the stable extendable subgroup $\esg^\stable_K$ equals
  		$\mcg(T^2)$.
  		However, in this case,
  		the extendable subgroup $\esg_K$ is a proper subgroup of
  		$\mcg(T^2)$ of index three \cite{DLWY,Mo}.
  		Thus there are many automorphisms that are stably extendable but
  		not extendable for the unknotted embedding.

        Fix an orientation of the torus $T^2$. For any slope $c\subset T^2$ on the torus,
        we denote the (right-hand) Dehn twist along $c$ as:
            $$\tau_c:\,T^2\to T^2.$$
        More precisely, the induced automorphism on $H_1(T^2)$ is given by
        $\tau_{c*}(\alpha)\,=\,\alpha\,+\,I([c],\alpha)\,[c]$,
        for all $\alpha\in H_1(T^2)$, where $I:H_1(T^2)\times H_1(T^2)\to\ZZ$ denotes the intersection form.
        Note that
        the expression is independent from the choice of the direction of $c$.

        The criterion below is inspired from techniques in
        the paper of Susumu Hirose and Akira Yasuhara \cite{HY}. However,
        the reader should beware that our notion of stabilization in this paper
        does not change the fundamental group of the complement,
        so it is slightly different from their definition.

        \begin{lemma}\label{vanishingSingGenus} 
        	Let $K:T^2\hookrightarrow S^4$ be a knotted torus.
        	Suppose $c\subset T^2$ is a slope with the singular genus $g^\star_K(c)=0$. Then the Dehn twist
        	$\tau_c\in\mcg(T^2)$ along $c$ belongs to the stable extendable subgroup $\esg^\stable_K$.
        \end{lemma}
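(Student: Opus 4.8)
The plan is to read the hypothesis as a geometric bounding statement, promote the bounding disk to an \emph{embedded} disk after stabilizing, and then realize $\tau_c$ explicitly as a normal ``spin'' supported near that disk. To begin, $g^\star_K(c)=0$ means by Definition~\ref{genus} that $c\times\pt$ bounds a genus-$0$ surface, i.e.\ a disk, mapping continuously into $X_K$; equivalently, by Remark~\ref{cl}, $c$ has commutator length $0$ in $\pi_1(X_K)$, so $c\times\pt$ is null-homotopic in $X_K$. Thus there is a map $f\colon(D^2,\partial D^2)\to(X_K,\partial X_K)$ carrying $\partial D^2$ homeomorphically onto $c\times\pt$, which after a general-position perturbation I may assume to be a locally flat immersion whose only singularities are finitely many transverse double points in the interior.

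Next I would make the disk embedded and adjust its normal framing, both by stabilization. Each double point of $f$ lies in the interior of $X_K$, away from $K$ and $\partial X_K$. Forming a local connected sum with $\overline{\CPlane}$ at such a point (a blow-up, which does not touch $K$ and alters only $X_K$) separates the two local sheets, replacing the immersed disk by one with a single double point removed. After finitely many such moves I obtain a locally flat, properly embedded disk $\hat D$ with $\partial\hat D=c\times\pt$ inside the stabilized exterior $X_K\#Y_0$, where $Y_0$ is a connected sum of copies of $\overline{\CPlane}$, hence simply connected. Each blow-up also changes the normal Euler number of the resulting embedded disk, and by tubing an interior point of $\hat D$ into an exceptional sphere in additional $\CPlane$- or $\overline{\CPlane}$-summands, I can arrange the normal framing of $\hat D$ to equal any prescribed integer.

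Finally I would construct the extension. Set $Y=Y_0\#\cdots$, the simply connected connected sum of all summands introduced above, so that the stabilized exterior is $X_K\#Y$ and the stabilized sphere is $S^4\#Y$, with $K$ sitting in the $S^4$-summand as $K[Y]$. Let $A=c\times[-1,1]\subset K$ be an annular neighborhood of $c$, in which $\tau_c$ is supported, and let $W$ be a regular neighborhood of $\hat D\cup A$. On $W$ I would define a self-homeomorphism that is the identity on $\partial W$, restricts to the standard Dehn twist $\tau_c$ on $A\subset K$, and spins the normal $D^2$-fibre of $\hat D$ through one full turn as the boundary circle $\partial\hat D=c$ is traversed; extending by the identity gives an orientation-preserving self-homeomorphism of $S^4\#Y$. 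Provided the normal framing of $\hat D$ is chosen so that this spin interpolates the full twist on the $K$-side to the identity away from it, the restriction to $K[Y]$ is exactly $\tau_c$, witnessing that $\tau_c$ is $Y$-stably extendable, hence $\tau_c\in\esg^\stable_K$. The double-point removal is routine; I expect the crux to be this last matching — identifying the precise framing of $\hat D$ for which the normal spin closes up to $\tau_c$ on one side and to the identity on the other, and verifying (using the framing freedom obtained in the previous step) that this value is attainable.
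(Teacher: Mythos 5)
Your first two steps coincide with the paper's own proof: read $g^\star_K(c)=0$ as a null-homotopy of $c\times\pt$ in $X_K$ (Remark~\ref{cl}), perturb to a locally flat immersed disk, blow up the double points to get a properly embedded disk $\hat D$ in a stabilized exterior, and then adjust the normal Euler number one unit at a time by further blow-ups or tubes into exceptional spheres. The gap is the third step, which you yourself flag as ``the crux'' and leave unverified --- and that is where the entire mathematical content of the lemma sits. Two things are missing. First, you never say which framing to prescribe; the correct target is $e(\hat D;K[Y])=\pm1$. Second, the map you propose does not do what you want: writing the part of $\partial\nu_{\hat D}$ lying over $\partial\hat D$ as $S^1\times D^2$, the spin $(\theta,z)\mapsto(\theta,e^{i\theta}z)$ carries the annulus $\nu_{\hat D}\cap(K[Y]\times\pt)$ of framing $k$ onto the annulus of framing $k+1$, so it does not preserve the knotted torus setwise, let alone restrict to $\tau_c$ on it; indeed no fiberwise rotation can restrict to a Dehn twist on a preserved annulus, since a Dehn twist must move points in the $\theta$-direction. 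The framing condition also cannot be finessed away: for the unknotted torus both standard slopes bound embedded disks of framing $0$ in the exterior of the torus in $S^4$ itself, so if a framing-$0$ disk sufficed to extend the Dehn twist rel the complement, then $\tau_{c_\xi}$ and $\tau_{c_\eta}$ would be extendable and would generate $\mcg(T^2)\cong\SL(2,\ZZ)$, contradicting the index-three theorem of \cite{DLWY,Mo} quoted in Subsection~\ref{Subsec-extendableSubgroup}.

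What the paper does at this point is invoke the Hopf band phenomenon, following \cite{HY}. When $e(\hat D;K[Y])=\pm1$, the compact normal disk bundle $\nu_{\hat D}\cong B^4$ meets $K[Y]\times\pt$ in an annulus lying inside $\partial\nu_{\hat D}\cong S^3$, whose core is the unknot $c\times\pt=\partial\hat D$ and whose framing, measured against the $0$-framing of that unknot, is exactly $e(\hat D;K[Y])$; thus $e=\pm1$ says precisely that this annulus is a positive or negative Hopf band. By \cite[Proposition 2.1]{HY}, the Dehn twist along the core of a Hopf band in $S^3=\partial B^4$ extends to a self-homeomorphism of $B^4$ that is the identity on the complement of the band in $S^3$ --- ultimately because the Hopf band is an open book page of $S^3$ whose monodromy is that Dehn twist. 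Extending this homeomorphism by the identity over the rest of $S^4\#Y$ produces the required extension of $\tau_c$, showing it is $Y$-stably extendable. So your outline can be completed, but only by replacing the heuristic spin with this Hopf band lemma and fixing the framing at $\pm1$; as written, the proposal omits the decisive step.
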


        \begin{proof}
	        The idea of this criterion is that, for
	        a closed simply connected oriented $4$-manifold $Y$,
	        to have the Dehn twist $\tau_c$
	        extendable over $Y$ via the $Y$--stabilization
	        $K[Y]:T^2\hookrightarrow Y$, we need $c$ to bound a locally flat, properly embedded disk
	        of Euler number $\pm1$ in the complement of $K[Y]$ in $Y$.
	        Such a $Y$ can always be chosen to be the connected sum
	        of copies of $\CPlane$ or $\overline\CPlane$.
	        	        	        	        
	        Recall that we introduced the Euler number of a surface bounding
	        a slope in Subsection \ref{Subsec-plumbingBraidSatellite}
	        before the statement of Lemma \ref{niceSurfaces}.
	        Suppose $D$ is a locally flat, properly embedded disk in $X=Y-K[Y]$ 
	        bounded by a slope $c\times\pt$
            on $K[Y]\times\pt\subset\partial X$ with $e(D;K[Y])=\pm 1$.
            We claim in this case the Dehn twist $\tau_c\in\mcg(T^2)$ along $c$
            can be extended as an orientation-preserving self-homeomorphism of $Y$.
        	In fact,
        	following the arguments in the proof of \cite[Theorem 4.1]{HY}, 
        	we may take the compact normal disk bundle $\nu_D$ of $D$,
            identified as embedded in $X$ such that $\nu_D\cap (K[Y]\times \pt)$ is an
            interval subbundle of $\nu_D$ over $\partial D$. 
            Then $e(D;K[Y])=\pm 1$ implies that $\nu_D\cap (K[Y]\times \pt)$ 
            is a (positive or negative) Hopf band in the $3$-sphere $\partial \nu_D$, whose core is $c\times\pt$.
            Thus $\tau_c$ extends over $Y$ as a self-homeomorphism
            by \cite[Proposition 2.1]{HY}.
                
            Now it suffices to find a $Y$ fulfilling
            the assumption of the
            claim above. 
            Suppose $c\subset K$ is a slope with the singular genus $g^\star_K(c)=0$, then there is a map $j:D^2\to X_K$
            so that $\partial D^2$ is mapped homeomorphically onto $c\times\pt$ in $\partial X_K\cong K\times S^1$. We may also
            assume $j$ to be an immersion by the general position argument. Blowing up all the double points of $j(D^2)$, we obtain an embedding
            $j':D^2\hookrightarrow X_K\#(\overline{\CPlane})^{\#r}$ for some integer $r\geq0$.
            Suppose $e(j'(D);K[(\overline{\CPlane})^{\#r}])$ equals $s\in\ZZ$. 
            If $s>1$, we may further blow up $s-1$ points in
            $j'(D)\subset X_K\#(\overline{\CPlane})^{\#r}$. This gives rise to $j'':D^2\hookrightarrow X_K\#
            (\overline\CPlane)^{\#(r+s-1)}$ satisfying the assumption of the claim, so the Dehn twist
            $\tau_c$ is extendable over $X=X_K\#(\CPlane)^{\#(r+s-1)}$, or in other words, 
            it is $Y$--stably extendable where $Y=(\overline\CPlane)^{\#(r+s-1)}$.
            If $s<1$, a similar argument using negative blow-ups shows that $\tau_c$
            is $Y$--stably extendable, where $Y=(\CPlane)^{\#(1-s)}\#(\overline\CPlane)^{\#r}$.
        \end{proof}

\section{Further questions}\label{Sec-questions}

    In conclusion, for a knotted torus $K:T^2\hookrightarrow S^4$, the seminorm and the singular genus of a slope
    are meaningful numerical invariants which are sometimes possible to control using group theoretic methods.
    However, the genera of slopes seem to be much harder to compute. It certainly deserves further exploration
    how to combine the group-theoretic methods with the classical $4$-manifold techniques when the fundamental group
    comes into play.

    We propose several further questions about genera, seminorm and extendable subgroups.
    Suppose $K:T^2\hookrightarrow S^4$ is a knotted torus.

    \begin{question}
        When is the unit disk of the seminorm $\norm{\cdot}_K$
        a finite rational polygon, i.e.~bounded by finitely many
        segments of rational lines? (Cf.~Remark \ref{rhombus}.)
    \end{question}

    \begin{question}
        If the index of the extendable subgroup $\esg_K$ in $\mcg(T^2)$ equals three,
        is $K$ necessarily the knot
        connected sum of the unknotted torus with a knotted sphere?
    \end{question}

    \begin{question}
        If the stable extendable subgroup $\esg^\stable_K$ equals $\mcg(T^2)$,
        does the singular genus $g^\star_K$ vanish for every slope?
    \end{question}

    \begin{question}
        If $K$ is incompressible, i.e.~$\partial X_K$ is $\pi_1$--injective in the
        complement $X_K$,
        is the stable extendable subgroup $\esg^\stable_K$ finite?
    \end{question}

    \begin{question}
        For plumbing knotted satellites, does the upper bound in Proposition~\ref{plumbingGenera} (2)
        realize the genus of the slope?
    \end{question}

\bibliographystyle{amsalpha}

\end{document}